\documentclass[12pt]{amsart}

\usepackage{amsmath}
\usepackage{amssymb}
\usepackage{amsfonts}
\usepackage{mathrsfs}
\usepackage{graphicx}
\usepackage{epsfig}

\textheight=235mm \textwidth=135mm
\parskip=2mm

\newcommand{\hU}{\widehat{U}}
\newcommand{\cU}{\mathcal{U}}
\newcommand{\cV}{\mathcal{V}}
\newcommand{\hQ}{\widehat{Q}}

\theoremstyle{plain}
\newtheorem{thm}{Theorem}[section]
\newtheorem{cor}[thm]{Corollary}
\newtheorem{lem}[thm]{Lemma}
\newtheorem{prop}[thm]{Proposition}
\newtheorem{remark}[thm]{Remark}

\newtheorem*{nonametheorem}{Main Theorem}
\newtheorem*{reducedmaintheorem}{Reduced Main Theorem}

\theoremstyle{definition}
\newtheorem{definition}{Definition}[section]

\newcommand{\dom}{\textrm{D}}

\newcommand{\eps}{\varepsilon}

\renewcommand{\l}{\lambda}

\begin{document}

\title[]{The topological complexity of Cantor attractors for unimodal interval maps}
\author{Simin Li and Weixiao Shen}
\address{Department of Mathematics,
University of Science and Technology of China, Hefei,
230026,  CHINA (e-mail:lsm@ustc.edu.cn)}
\address{Department of Mathematics, National University of Singapore, Block S17, 10 Lower Kent Ridge Road, Singapore 119076 (e-mail:matsw@nus.edu.sg)}
\date{}
\thanks{{\em 2010 Mathematics Subject Classification.} Primary 37E05, Secondary 37C70, 37B40.}

\begin{abstract}For a non-flat $C^3$ unimodal map with a Cantor attractor,
we show that for any open cover $\mathcal U$ of this attractor, 
the complexity function $p(\mathcal U, n)$ is of order $n\log n$. In the appendix, we construct a non-renormalizable map with a Cantor attractor for which $p(\mathcal{U}, n)$ is bounded from above for any open cover $\mathcal{U}$.
\end{abstract}

\maketitle

\section{Introduction}
In this paper, we will consider the topological complexity of a unimodal interval map $f:[0,1]\to [0,1]$ restricted to an invariant Cantor set $X$. For an open cover $\cU$ of $X$, let $N(\cU)$ be the minimal cardinality of a sub-cover of $\cU$. For open covers $\cU, \cV$ of $X$, let $\cU \bigvee \cV=\{U\cap V: U\in \cU, V\in \cV\}$. The {\it topological complexity function} of an open cover $\cU$ is the non-decreasing function
$$p(\cU, n)= N(\bigvee_{i=0}^{n-1}f^{-i}\cU), \,\, n=1,2,\ldots.$$
Recall that the topological entropy of $f:X\to X$ is by definition
$$h_{\text{top}}(f|_X)=\sup_{\mathcal{U}} \lim_{n\to\infty} \frac{1}{n}\log p(\cU, n),$$
where the supremum is taken over all open covers of $X$.
The complexity functions can be used to characterize the dynamical behavior of some systems with topological entropy zero. For example, it is proved in \cite[Proposition 2.2]{BHM} that a system is equicontinuous if and only if the complexity function is bounded for each open cover.

A continuous map $f:[0,1] \to [0,1]$ is called  {\it unimodal} if there exists a unique $c\in (0,1)$ (called the {\em turning point}) such that $f$ is strictly increasing on $[0,c]$ and strictly decreasing on $[c,1]$.
In order to apply a convenient version of the Koebe principle, we shall always assume that $f$ is $C^3$ outside $c$, $f'(x)\not =0$ for $x\not =c$, and $c$ is non-flat, i.e., there exist $C^3$ local diffeomorphisms $\phi$, $\psi$ defined on a neighborhood of $0$ with $\phi(0)=c$, $\psi(0)=f(c)$, and a real number $\ell=\ell_c>1$ (called the {\it order} of $c$), such that $|\psi^{-1}\circ f\circ \phi(x)|=|x|^\ell$ holds when $|x|$ is small.
The turning point $c$ is also called a {\em critical point.}

Let $\mathcal{A}$ denote the collection of unimodal maps with the above properties and let $\mathcal{A}_*$ denote the collection of $f\in\mathcal{A}$ which have all periodic points hyperbolic repelling.

We are interested in the case that $X$ is a Cantor attractor. Following \cite{Milnor}, a (minimal) {\it metric attractor} is a compact invariant subset $X\subset [0,1]$ such that $\textrm{Rel}(X):=\{x\in [0,1]: \omega(x)\subset A\}$ has positive Lebesgue measure, but no invariant compact proper subset of $X$ has this property.  Metric attractors were studied in \cite{BL} under an additional assumption that $f$ has negative Schwarzian derivative, although most of their work extends to maps in the class $\mathcal{A}$ after \cite{K}, see also \cite{GSS}. In particular, it was shown that a metric attractor of $f\in\mathcal{A}$ can be one of the following forms: an attracting periodic orbit, or the union of a cycle of periodic intervals, or a Cantor set. In the last case, the Cantor attractor $X$ must coincide with $\omega(c)$ and $h_{\text{top}}(f|_X)=0,$ see ~\cite[Section 11]{BL}.

Our main result is the following theorem.

\begin{nonametheorem}
Let $f\in\mathcal{A}_*$ be a unimodal map with critical point $c$. Suppose that $\omega(c)$ is a Cantor attractor. Then for each open cover $\cU$ of $\omega(c)$, there is a constant $C>0$ such that  the complexity function of $f|\omega(c)$ satisfies $p(\cU,n)\le Cn\log n$ for $n>1$.
\end{nonametheorem}

One may wonder whether $p(\cU, n)$ has a lower bound for some (small) open cover $\cU$.
It is well-known that when $f$ is infinitely renormalizable, $\omega(c)$ is a Cantor attractor, and $f:\omega(c)\to\omega(c)$ is topologically conjugate to an adding machine and hence $p(\cU,n)$ is bounded for each open cover $\cU$ of $\omega(c)$.   
Even in the non-renormalizable case, there exists a unimodal map with a Cantor attractor for which $f:\omega(c)\to\omega(c)$ is again topologically conjugate to an adding machine,
as we show in Theorem~\ref{thm:ec} in \S\ref{sec:appendix}.
On the other hand, in Corollary~\ref{cor:special}, we prove that for interval maps with {\em special combinatorics} (including the well-studied Fibonacci case), $p(\cU, n)/n$ is bounded away from zero for small open covers of $\omega(c)$.

By considering open covers formed by nice intervals and their entry domains, we reduce the Main Theorem to an estimate of number of children of symmetric nice intervals. See the Reduced Main Theorem in \S\ref{subsec:nicecover}.

A Cantor attractor of non-infinitely renormalizable map $f\in\mathcal{A}$ is often called a {\em wild attractor} because its basin of attraction is of the first Baire category.
Existence of wild attractors for unimodal maps with the Fibonacci combinatorics was obtained in~\cite{BKNS}. This result was generalized in~\cite{B} to unimodal maps with ``Fibanacci-like'' combinatorics. While a sufficient and necessary combinatorial condition compatible with existence of wild attractor seems far from being reached, the dynamics of unimodal maps restricted to wild attractors was studied in~\cite{BKP, BSS,li-shen-Haudoff}, among others. Our construction in Theorem~\ref{thm:ec} is motivated by \cite{BKM} and the proof uses a result of \cite{B}.

\noindent {\bf Entropy zero systmes.}
There have appeared quite a few notions to measure the complexity  of topological dynamical systems of zero topological entropy. In the following, we shall mention two of them.
In \cite{DHP}, a notion called {\em topological entropy dimension} was introduced. (The metric entropy dimension was introduced earlier in \cite{FP}.) A topological dynamical system $f:X\to X$ has zero (upper) topological entropy dimension if for every open cover $\mathcal{U}$ and $\alpha>0$,
$n^{-\alpha}\log p_n(\mathcal{U})\to 0$ as $n\to\infty$. So our main theorem clearly implies that the topological entropy dimension of $f|\omega(c)$ is zero.

Another notion we would like to mention is the {\em topological sequence entropy} introduced in \cite{Goodman}. For an increasing sequence $(n_k)_{k=1}^\infty$ of positive integers, the sequence entropy of $f: X\to X$ is
$$h(T , (n_k)_{k=1}^{\infty}) = \sup_{\mathcal{U}}\limsup_{k\to\infty} \frac{1}{n_k}\log N\left (\bigvee_{i=1}^k T^{-n_i}(\mathcal{U})\right),$$
where the supremum is taken over all open covers of $X$. There are systems which have zero topological entropy but positive topological sequence entropy. A system is called a {\em null system} if the topological sequence entropy is zero for every sequence $(n_k)_{k=1}^\infty$. We shall see that the dynamics in a Cantor attractor is not necessarily null. In fact, in \cite[Theorem 3 (2)]{BKP}, Bruin, Keller and Pierre constructed a unimodal map $f\in\mathcal{A}_*$ together with a symbolic dynamical system $(\Omega, T)$ such that
\begin{itemize}
\item $(\Omega, T)$ is minimal, uniquely ergodic and weakly mixing respect to its unique invariant probability measure $\mu$;
\item $f$ has a wild attractor $\omega(c)$;
\item $f|\omega(c)$ is a factor of $(\Omega, T)$.
\end{itemize}

\begin{thm}
For the above example of Bruin-Keller-Pierre, $f:\omega(c)\to \omega(c)$ has zero entropy dimension but is not null.
\end{thm}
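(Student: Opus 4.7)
The plan is to prove the two statements separately, and to reduce non-nullness to the classical Kushnirenko--Goodman circle of ideas applied to the measure-theoretic factor given by the Bruin--Keller--Pierre construction.

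Zero entropy dimension is immediate from the Main Theorem: for every open cover $\mathcal{U}$ of $\omega(c)$ one has $p(\mathcal{U},n)\le Cn\log n$, so $n^{-\alpha}\log p(\mathcal{U},n)=O(n^{-\alpha}\log n)\to 0$ for every $\alpha>0$, which is exactly the definition of zero upper topological entropy dimension recalled in the introduction.

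For non-nullness, I would first recall Goodman's (easy half of the) variational principle: for any $f$-invariant Borel probability $\nu$ on $\omega(c)$ and any increasing sequence $(n_k)$,
\[
h^{\mathrm{seq}}_\nu(f,(n_k)) \;\le\; h^{\mathrm{seq}}_{\mathrm{top}}(f|\omega(c),(n_k)).
\]
It therefore suffices to exhibit some $\nu$ and some $(n_k)$ with positive measure-theoretic sequence entropy. I would take $\nu=\pi_*\mu$, where $\pi:\Omega\to\omega(c)$ is the factor map and $\mu$ is the unique invariant probability of $(\Omega,T)$. Since $(\Omega,T)$ is minimal and uniquely ergodic, $\mu$ has full support, so $\mathrm{supp}(\nu)=\pi(\mathrm{supp}\,\mu)=\omega(c)$, a Cantor set. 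In particular $\nu$ is non-atomic, so $L^2(\nu)$ is infinite-dimensional and the factor $(\omega(c),f,\nu)$ is non-trivial. Because weak mixing is inherited by measure-theoretic factors (the $L^2$ of the factor embeds into that of the extension, preserving invariant functions of the product transformation), $(\omega(c),f,\nu)$ is itself weakly mixing.

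The final step is Kushnirenko's theorem: a measure-preserving system satisfies $h^{\mathrm{seq}}_\nu(f,(n_k))=0$ for every sequence $(n_k)$ if and only if it has pure point (discrete) spectrum. A non-trivial weakly mixing system has only constant eigenfunctions, hence does not have pure point spectrum, so some sequence $(n_k)$ must produce $h^{\mathrm{seq}}_\nu(f,(n_k))>0$. Combined with Goodman's inequality this yields $h^{\mathrm{seq}}_{\mathrm{top}}(f|\omega(c),(n_k))>0$, so $f|\omega(c)$ is not null. The only point requiring care is to match the precise normalization of topological sequence entropy stated in the introduction with the classical conventions of Goodman and Kushnirenko in the variational inequality above; modulo this routine bookkeeping, all ingredients are standard.
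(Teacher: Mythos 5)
Your argument for zero entropy dimension is identical to the paper's. For non-nullness, however, you take a genuinely different route. The paper stays entirely at the topological level: it first observes that weak mixing of $\mu$ plus full support makes $T\times T$, and hence $f\times f$, \emph{topologically} transitive, and then invokes the structure theorem of Huang--Li--Shao--Ye (\cite[Theorem 4.3]{HLSY}) that a null system is an almost one-to-one extension of an equicontinuous system; since an equicontinuous system on a non-degenerate space cannot have a topologically transitive square, and this property passes to factors, one gets a contradiction. You instead drop to the measure-theoretic level: push $\mu$ forward by the factor map to an invariant measure $\nu$, check $(\omega(c),f,\nu)$ is a non-trivial measurable factor, note that weak mixing passes to measurable factors, and apply Kushnirenko's theorem (null measurably iff discrete spectrum) together with the easy half of Goodman's variational inequality to lift positive measure sequence entropy to positive topological sequence entropy. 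This is a correct and self-contained alternative; it trades the topological structure theorem for Kushnirenko's theorem plus the variational inequality. One small imprecision to fix: the step ``$\mathrm{supp}(\nu)$ is a Cantor set, in particular $\nu$ is non-atomic'' is not a valid inference on its own (a measure with Cantor support can still have atoms). Instead, use that $\nu$ is ergodic (as the image of an ergodic measure under a factor map) and that $(\omega(c),f)$ is a minimal Cantor system, hence has no periodic orbits; an ergodic measure with an atom is concentrated on a periodic orbit, so $\nu$ must be non-atomic. With that repaired, the proof goes through, modulo the normalization bookkeeping you already flag.
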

\begin{proof}
By the Main Theorem, $f:\omega(c)\to\omega(c)$ has entropy dimension zero. Let us show that $f:\omega(c)\to\omega(c)$ is not null. Indeed, since $T$ is minimal, for each non-empty open set $U\subset \Omega$, $\Omega=\bigcup_{n=0}^{+\infty} T^{-n}U$ and therefore $\mu(U)>0$. 
Since $T$ is weakly mixing with respect to $\mu$, $T\times T$ is ergodic with respect to $\mu\times \mu$. As $\mu\times \mu$ has positive measure on each non-empty open set, it follows that $T\times T$ is topologically transitive, see \cite[Theorem 5.16]{W}.
Thus $$\widehat{f}:=f\times f: \omega(c)\times \omega(c)\to \omega(c)\times \omega(c)$$ is topologically transitive.
Arguing by contradiction, assume that $f|\omega(c)$ is null. Then by \cite[Theorem 4.3]{HLSY}, it is an almost one to one extension of an equicontinuous system $g: X\to X$. As $\omega(c)$ is a Cantor set, $X$ is not a singleton, hence $g\times g$ is not topologically transitive. On the other hand, $g\times g$ is a factor of $\widehat{f}$, hence topologically transitive. Contradiction!
\end{proof}

{\bf Acknowledgment.} We would like to thank the anonymous referee for his/her valuable comments which led to a revision of this paper.
\section{Nice intervals and children}\label{sec:prelimi}
In this section we shall prove the Main Theorem in the infinitely renormalizable case and deduce it from a Reduced Main Theorem in the non-infinitely-renormalizable case.

Consider a unimodal map $f:[0,1] \to [0,1]$ in $\mathcal{A}_*$.
Let $c$ denote the critical point of $f$ and let $\ell$ be the order of $c$.
Without loss of generality, we may assume $f(0)=f(1)=0$. We will also assume that $f$ is geometrically symmetric near $c$.

\subsection{Notations and terminologies}
Given a subset $V$ of $[0,1]$ and an integer $k\ge 0$, we say that a component $J$  of $f^{-k}(V)$ is a {\em pull back of $V$ by $f^k$}. We say that such a pull back is
\begin{itemize}
\item {\em critical} if it contains the critical point $c$;
\item {\em diffeomorphic} if $f^k$ maps $J$ diffeomorphically onto a component of $V$;
\item {\em unimodal} if $J\ni c$ and $f^{k-1}$ maps a neighborhood of $f(J)$ diffeomorphically onto a component of $V$.
\end{itemize}

For $T\subset [0,1]$, let
$$\dom(T)=\{x\in [0,1]: f^k(x)\in T \mbox{ for some }k\ge 1\}.$$
The {\it first entry map} $R_T: \dom(T)\to T$ is defined as $x\mapsto f^{k(x)}(x)$, where $k(x)$ is the {\it entry time} of $x$ into $T$, i.e., the minimal positive integer such that $f^{k(x)}(x)\in T$. The map $R_T|(\dom(T)\cap T)$ is called the {\it first return map} of $T$. A component of $\dom(T)$ (resp. $\dom(T)\cap T$) is called an {\it entry domain} (resp. {\it return domain}) of $T$. Let
$\mathcal L_x(T)$ denote the entry domain containing $x$.

Let us call an open set $T\subset [0,1]$ {\it nice} if $f^n(\partial T)\cap T=\emptyset$ for all $n\ge 0$ and $T$ does not contain a fixed point of $f$. It is well-known that for such an open set $T$,
\begin{itemize}
\item pull-backs of a nice set are again nice;
\item if $J_j$ is a pull back of $T$ by $f^{k_j}$, $j=1,2$, and $k_1\ge k_2$, then $J_1\cap J_2=\emptyset$ or $J_1\subset J_2$;
\item the entry time is constant in any component of $\dom(T)$, so the first entry map $R_T: D(T)\to T$ is continuous.
\end{itemize}
Moreover, if $f\in\mathcal{A}_*$, then there exists an arbitrarily small symmetric nice interval $T\ni c$. See for example~\cite{M}.

A nice interval $T\ni c$ is called {\em symmetric} if $f(\partial T)$ consists of a single point.
A unimodal pull back of a nice interval $T\ni c$ is also called a {\em child } of $T$.

We say that $f$ is {\em persistently recurrent}
if for each symmetric nice interval $T\ni c$, the number of children of $T$ is finite.
The following is well-known.
\begin{prop}[Blokh-Lyubich~\cite{BL}]\label{prop:persistentrec}
Suppose that $f\in\mathcal{A}_*$ has a Cantor attractor $A$. Then $A=\omega(c)\ni c $, $A$ is a minimal set and $f$ is persistently recurrent.
\end{prop}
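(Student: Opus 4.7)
The proposition packages three structural conclusions for $f\in\mathcal{A}_*$ with a Cantor attractor $A$: that $c\in A$ (hence $\omega(c)\subset A$), that $A=\omega(c)$ is topologically minimal, and that $f$ is persistently recurrent. I would establish them in that order. For $c\in A$, I argue by contradiction: if $c\notin A$ then $A$ is compact and bounded away from $c$, so $|f'|\ge\lambda_0>0$ on a neighborhood of $A$. Since $f\in\mathcal{A}_*$, all periodic orbits in $A$ are hyperbolic repelling, and the \mane hyperbolicity theorem then makes $A$ uniformly expanding. A nontrivial uniformly expanding compact invariant set of a $C^2$ interval map has Lebesgue-null basin, contradicting $|\textrm{Rel}(A)|>0$. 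Hence $c\in A$, and $\omega(c)\subset A$ by forward invariance.

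For $A=\omega(c)$ together with topological minimality, the heart of the argument is the Blokh--Lyubich structural analysis. The key claim is that $\textrm{Rel}(\omega(c))$ has positive Lebesgue measure, since this combined with the minimality clause in the definition of metric attractor forces $A=\omega(c)$. I would establish the claim in two steps: (i) Lebesgue-typical $x\in\textrm{Rel}(A)$ visit arbitrarily small neighborhoods of $c$ infinitely often---otherwise the orbit would eventually sit in a region where $|f'|$ is bounded below, yielding uniform expansion incompatible with attraction; (ii) distortion control along high iterates of first-return maps near $c$ lets one ``shadow'' the critical orbit, giving $\omega(x)=\omega(c)$ for a.e.\ $x\in\textrm{Rel}(A)$. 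Topological minimality of $A$ then follows because $A=\omega(c)\ni c$ makes $c$ recurrent, and the $\omega$-limit of a recurrent critical point is minimal by a standard closing argument.

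For persistent recurrence, I argue contrapositively: suppose a symmetric nice interval $T\ni c$ admits infinitely many children $\{J_n\}$. Distinct children are pairwise disjoint inside $T$ (a direct consequence of niceness), so $|J_n|\to 0$ and $J_n\to c$. The real-bounds machinery for non-flat maps in $\mathcal{A}_*$ furnishes uniform Koebe space for the unimodal pull-backs realizing these children, producing a nested sequence of nice intervals shrinking to $c$ with uniformly controlled geometry. A standard first-return construction then exhibits a positive-measure set of points whose orbits enter the innermost intervals of this sequence only finitely often; such points cannot have $\omega(x)\subset\omega(c)$, contradicting $A=\omega(c)$. The main obstacle of the whole plan is the second step: upgrading the topological observation that orbits asymptotic to $A$ cluster on $c$ to the measure-theoretic statement $\omega(x)=\omega(c)$ almost everywhere requires the full wild-attractor toolkit---absence of absolutely continuous invariant measures on $\omega(c)$, uniform distortion along high first-return iterates, and careful accounting of entry times to shrinking nice intervals. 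Steps 1 and 3, by contrast, are comparatively routine applications of the \mane theorem and the real-bounds calculus.
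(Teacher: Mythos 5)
The paper does not give a proof of Proposition~\ref{prop:persistentrec}: it is cited directly to Blokh--Lyubich~\cite{BL}, with the extension to the general $C^3$ class $\mathcal{A}_*$ (dropping negative Schwarzian) deferred to \cite{K} and \cite{GSS}. So there is no in-paper argument to compare against; what follows is a review of your outline on its own terms.

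Your first step is essentially right, though it has two sub-steps bundled together: the \mane hyperbolicity theorem gives that a compact invariant set avoiding $c$ with only repelling periodic orbits is an expanding repeller, and then separately one invokes the measure-theoretic fact (also due to Ma\~n\'e, and requiring $C^2$ non-flatness) that such a repeller has Lebesgue-null basin of attraction; you should not collapse these. Your second step correctly identifies the core Blokh--Lyubich analysis (typical orbits shadow the critical orbit), and your inference of minimality from recurrence of $c$ is standard. One detail worth making explicit: in step 3 you will want minimality already in hand, since the deduction ``$\omega(x)\subset\omega(c)$ and $x$ visits arbitrarily small neighborhoods of $c$ infinitely often'' requires $\omega(x)\subset\omega(c)\Rightarrow\omega(x)=\omega(c)\ni c$, which is precisely minimality.

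The genuine gap is step 3, and specifically your claim that it is ``comparatively routine.'' It is not. The passage from ``$T$ has infinitely many children $J_n\searrow\{c\}$ with uniform Koebe space'' to ``a positive-measure set of points visits $J_n$ only finitely often'' is one of the substantive parts of the Blokh--Lyubich/Martens theory, not a standard first-return argument. The actual mechanism is a dichotomy: reluctant recurrence (equivalently, infinitely many children of some nice interval, i.e., unbounded free Koebe space at arbitrarily small scales) is used to construct a Markov induced map with uniformly bounded distortion, which either yields an absolutely continuous invariant measure supported on a cycle of intervals (so the attractor is an interval cycle, not a Cantor set) or forces a definite fraction of Lebesgue measure to escape each level of the nest, again incompatible with $\textrm{Rel}(\omega(c))$ having full relative measure inside the nest. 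Your sketch names neither mechanism and instead asserts the conclusion. If you want to close this gap, you should say explicitly which dichotomy you are invoking and cite the Martens distortion/induced-map construction or the corresponding sections of \cite{BL}; as written, the final step is a restatement of what needs to be proved rather than a proof.
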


Given a bounded interval $I$ and a constant $\tau>0$, let $\tau I$ denote the open interval which is concentric with $I$ and has length $\tau |I|$.  We say that a bounded interval $J$ is $\tau$-well inside an interval $I$ if $I\supset (1+2\tau) J$, i.e., both components of $I\setminus J$ have length at least $\tau |J|$.

A nice interval $I$ is called {\em $\tau$-nice}, if each return domain of $I$ is $\tau$-well inside $I$.

A closed interval $I$ is called a {\em restrictive interval} if $I$ contains $c$ in its interior and there exists an integer $s\ge 2$ such that $I, f(I),\ldots, f^{s-1}(I)$ have pairwise disjoint interior and such that
$f^s(I)\subset I$, $f^s(\partial I)\subset \partial I$. The integer $s$ is called the {\em period } of $I$ and $f^s:I\to I$ is called a {\em renormalization} of $f$. The map $f$ is called {\em infinitely renormalizable} if there exists a restrictive interval with an arbitrarily large period.


\subsection{Nice covers} \label{subsec:nicecover}
Assume that $f\in \mathcal{A}_*$ has a non-periodic recurrent critical point $c$ such that $\omega(c)$ is minimal.
We say that an open cover $\mathcal{Y}$ of $\omega(c)$ is {\em nice} if there is a symmetric nice interval $Y$ such that $\mathcal{Y}$ is the collection of components of $Y\cup \dom(Y)$ which intersect $\omega(c)$. For such an open cover, let
\begin{equation}\label{eqn:qclY}
q(\mathcal{Y}, n)=\#\left\{\text{components of } f^{-n}(Y\cup \dom(Y))\text{ intersecting }\omega(c)\right\}¡£
\end{equation}
For each nice interval $Y\ni c$, let $\nu(Y)$ denote the number of children of $Y$ and for each $n\ge 0$, let $Y_{-n}$ denote the component of $f^{-n}(\dom(Y)\cup Y)$ which contains $c$.
We shall use the following lemma:
\begin{lem}\label{lem:pq} For any symmetric nice interval $Y$ and the corresponding nice cover $\mathcal{Y}$, we have
$$p(\mathcal{Y},n+1)\le q(\mathcal{Y}, n)$$
for each $n\ge 0$. Moreover, for $n$ large enough, we have $$q(\mathcal{Y}, n)\le \sum_{i=0}^{n-1} \nu(Y_{-i}).$$
\end{lem}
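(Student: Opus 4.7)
The crucial observation is that $D := Y \cup \dom(Y)$ is \emph{backward invariant}: if $f(x) \in D$, then $f(x)$ has some forward iterate in $Y$, and hence so does $x$, so $x \in D$. Iterating gives $f^{-k}(D) \subseteq D$ for all $k \ge 0$. Consequently, for any component $V$ of $f^{-n}(D)$ meeting $\omega(c)$ and any $0 \le i \le n$, $f^i(V) \subseteq f^{-(n-i)}(D) \subseteq D$ is a connected subset of $D$, so it lies in a single component $U_i \in \mathcal{Y}$. Thus the itinerary $(U_0, \ldots, U_n) \in \mathcal{Y}^{n+1}$ realized at any point of $V \cap \omega(c)$ depends only on $V$. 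Because the elements of $\mathcal{Y}$ are pairwise disjoint (being components of an open set), the non-empty atoms of $\bigvee_{i=0}^{n} f^{-i}\mathcal{Y}$ meeting $\omega(c)$ are indexed precisely by the realized itineraries; hence $p(\mathcal{Y}, n+1)$ equals the number of distinct itineraries, which is at most the number $|T_n|$ of components of $f^{-n}(D)$ meeting $\omega(c)$, namely $q(\mathcal{Y}, n)$.

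\textbf{Plan for the second inequality.} Write $T_n$ for the set of components of $f^{-n}(D)$ meeting $\omega(c)$ and $Z_n$ for the unique one containing $f(c)$. I would proceed by induction, aiming to prove $|T_{n+1}| - |T_n| \le \nu(Y_{-n})$ for all $n \ge 0$. Consider the natural map $\pi: T_{n+1} \to T_n$ sending $V$ to the unique $W \in T_n$ containing $f(V)$. Unimodality of $f$ shows that $|\pi^{-1}(W)| = 1$ when $W = Z_n$ (the sole preimage being the critical pullback $Y_{-(n+1)}$), and $|\pi^{-1}(W)| \le 2$ otherwise, with at most one preimage on each side of $c$. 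Minimality of $\omega(c)$ makes $f|_{\omega(c)}$ surjective, forcing $|\pi^{-1}(W)| \ge 1$ for every $W \in T_n$. A direct count then gives
$$|T_{n+1}| - |T_n| = \#\{W \in T_n : W \neq Z_n \text{ and } |\pi^{-1}(W)| = 2\}.$$

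\textbf{Main obstacle and base case.} The heart of the argument is showing that the right-hand side is bounded by $\nu(Y_{-n})$, which I would do by constructing an injection from the ``doubly-preimaged'' $W$'s into the children of $Y_{-n}$. For such a $W$, the two pullback components $V^-, V^+ \in T_{n+1}$ lie on opposite sides of $c$ and both meet $\omega(c)$; by minimality, the orbit of $c$ must visit $V^- \cup V^+$, and letting $k_W$ be the first such time, the component of $f^{-(k_W+1)}(W)$ containing $c$ should provide a unimodal pullback of $Y_{-n}$ (after composing with a suitable monotone branch of $f^n$), i.e., a child. The subtle point is to verify the unimodality condition at each intermediate iterate, which one hopes follows from the minimality of $k_W$ combined with the niceness of $W$ as a pullback of $D$ (note that $D$ is itself nice, since $\partial D \subseteq D^c$ is forward invariant); injectivity of the assignment follows from the disjointness of distinct components in $T_n$. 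Finiteness of $\nu(Y_{-n})$ is guaranteed by persistent recurrence (Proposition~\ref{prop:persistentrec}). For the base case, one chooses $n_0$ large enough that $|T_{n_0}| \le \sum_{i=0}^{n_0 - 1} \nu(Y_{-i})$; this is possible because $\nu(Y_{-i}) \ge 1$ for every $i$ (the central first-return domain of $Y_{-i}$ containing $c$ is always a child), so the right-hand side grows at least linearly while the left remains controlled.
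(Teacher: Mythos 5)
Your treatment of the first inequality is correct and essentially coincides with the paper's argument: both rest on the fact that for any component $V$ of $f^{-n}(Y\cup\dom(Y))$, the images $f^j(V)$, $0\le j\le n$, stay in $Y\cup\dom(Y)$ and so lie in single elements of $\mathcal{Y}$, whence $V$ sits inside one element of $\bigvee_{j=0}^n f^{-j}\mathcal{Y}$. Your remark that the elements of $\mathcal{Y}$ are pairwise disjoint, so that $p(\mathcal{Y},n+1)$ actually \emph{equals} the number of realized itineraries, is a correct sharpening, and the backward invariance of $D:=Y\cup\dom(Y)$ is a clean way to see $f^j(V)\subseteq D$.

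Your route to the second inequality is genuinely different from the paper's, and as written it has a gap. The paper does not induct on $n$: it classifies each component $J$ of $f^{-n}(D)$ meeting $\omega(c)$ by the minimal $n'=n'(J)$ with $c\in f^{n'}(J)$. For $1\le n'<n$ it shows $f^{n'}$ maps $J$ diffeomorphically onto $Y_{-(n-n')}$, and pulling $J$ back along the first visit of $c$ to $J$ yields a child of $Y_{-(n-n')}$; distinct $J$'s in the same class have distinct first-entry times and so give distinct children, giving $\#\mathcal{J}_{n'}\le\nu(Y_{-(n-n')})$ directly, and the sum over $n'$ finishes the argument in one step. Your inductive setup via $\pi:T_{n+1}\to T_n$ is an interesting reformulation and the count $|T_{n+1}|-|T_n|=\#\{W\in T_n:W\neq Z_n,\ |\pi^{-1}(W)|=2\}$ is correct, but the key claim --- an injection from such ``doubled'' $W$'s into the children of $Y_{-n}$ --- is not established, and the construction you sketch does not make sense as stated. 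The interval $W$ is a component of $f^{-n}(D)$ different from $Y_{-n}$, and there is no given diffeomorphism $f^n$ (or monotone branch thereof) relating $W$ to $Y_{-n}$; what one \emph{can} extract from $W$ is a child of $Y_{-(n-j)}$ where $j$ is the first time $f^j(W)\ni c$, and $j$ varies with $W$ --- which is exactly why the paper organizes the count by that index rather than inducting. Moreover your base case is not closed: you need $|T_{n_0}|\le\sum_{i=0}^{n_0-1}\nu(Y_{-i})$ for some $n_0$ past which the inductive step applies, but the assertion that ``the left remains controlled'' is not justified; in general $|T_{n_0}|$ grows with $n_0$, and $\nu(Y_{-i})\ge 1$ alone does not supply this inequality.
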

\begin{proof} For $x\in\omega(c)$ and $n\ge 0$, if $Z_n(x)$ is the component of $f^{-n}(\dom(Y)\cup Y)$ which contains $x$, then $f^j(Z_n(x))$, $0\le j\le n$, is contained in a component of $\dom(Y)\cup Y$. It follows that $Z_n(x)$ is contained in a element of $\bigvee_{j=0}^{n} f^{-j}\mathcal{Y}$. The first inequality follows. %

Let us prove the second inequality, assume that $n$ is so large that $Y$ has no child with transition time greater than $n$.
For each component $J$ of $f^{-n}(\dom(Y)\cup Y)$, there exists a minimal integer $n'=n'(J)\in \{0,1,\ldots, n\}$ such that $f^{n'}(J)$ contains the critical point $c$.
Let $\mathcal{J}_{n'}$ denote the collection of all components $J$ of $f^{-n}(\dom(Y)\cup Y)$ with $n'(J)=n'$ and $J\cap\omega(c)\not=\emptyset$.
Clearly, $\mathcal{J}_0$ has at most one element. Let us show that $\mathcal{J}_n=\emptyset$. Indeed, any element $J$ is a diffeomorphic pull back of $Y$ by $f^m$ for some $m=m(J)\ge n$, and if $t(J)>1$ is the entry time of $c$ into $J$, then the pull back of $J$ by $f^{t(J)}$ containing $0$ is a child of $Y$ with transition time $\ge m+t(J)>n$, which is ruled out by our assumption on $n$.
A similar argument shows that for each $n>n'>0$,
\begin{equation}\label{eqn:qvschild}
\#\mathcal{J}_{n'}\le \nu(Y_{-n+n'}).
\end{equation}
Indeed, each $J\in\mathcal{J}_{n'}$ is a diffeomorphic pull back of $Y_{-n+n'}$ by $f^{n'}$, so if $t(J)$ is the first entry time of $c$ to $J$,
then the component of $f^{-t(J)}(J)$ which contains $c$ is a child of $Y_{-n+n'}$ with transition time $t(J)+n'$. As different $J$'s correspond to different $t(J)$'s, (\ref{eqn:qvschild}) follows.
Thus
$$q(\mathcal{Y},n)\le \sum_{i=1}^{n-1} \nu(Y_{-i})+1\le \sum_{i=0}^{n-1} \nu(Y_{-i}).$$
\end{proof}
\begin{remark} In Theorem~\ref{thm:lower}, we shall show that $q(\mathcal{Y},n)/n$ is bounded away from zero. However, this does not imply a lower bound for $p(\mathcal{Y},n)$, because an element of $\bigvee_{j=0}^n f^{-j}\mathcal{Y}$ may contain a large number of components of $f^{-n}(\dom(Y)\cup Y)$ intersecting $\omega(c)$.
\end{remark}

The following Reduced Main Theorem is the main step of our proof of the Main Theorem.
\begin{reducedmaintheorem}
Suppose that $f\in\mathcal{A}_*$ is non-renormalizable and that $\omega(c)$ is a Cantor attractor. For each
symmetric nice interval $Y\ni c$, there exists $n_0=n_0(Y)\ge 2$ such that if $T$ is a critical pull back of $Y$ by $f^n$ for some $n\ge n_0$, then the number of children of $T$ is bounded from above by $C\log n$, where $C>0$ is a constant depending only the critical order.
\end{reducedmaintheorem}

\begin{proof}[Proof of the Main Theorem]
We may assume that $f$ is non-renormalizable, as in the infinitely renormalizable case the Main Theorem is well-known, see for exmaple \cite[Proposition III.4.5]{MS}, and the finitely renormalizable case can be reduced to the non-renormalizable case.

Given a nice interval $Y\ni c$, let $\mathcal Y$ denote the corresponding nice cover of $\omega(c)$.
Since $f$ has no wandering interval (\cite[Chapter IV]{MS}), the maximal length of elements of $\mathcal{Y}$ tends to zero as $|Y|\to 0$.
Thus for any open cover $\mathcal{U}$ of $\omega(c)$, there exists a small symmetric nice interval $Y\ni c$ such that
$\mathcal{Y}$ is a refinement of $\mathcal U$, hence $p(\mathcal{U}, n)\le p(\mathcal{Y}, n)$. By Lemma~\ref{lem:pq}, it follows that
$$p(\cU, n+1)\le \sum_{i=0}^{n-1} \nu(Y_{-i})$$
provided that $n$ is large enough.
By Proposition~\ref{prop:persistentrec}, $\nu(Y_{-i})$ is finite for each $i$. By the Reduced Main Theorem, there exists $n_0$ such that for $i\ge n_0$,
$\nu(Y_{-i})\le C\log i$. Thus $p(\cU, n+1)=O(n\log n)$.
\end{proof}

\begin{remark}
In \cite{BV}, it is proved that a Fibonacci-like unimodal map has sub-linear complexity, i.e., $p(\cU, n)\le Cn$ for some constant $C>0$ and each open cover $\cU$.
For a Fibonacci-like unimodal map, the numbers of children of nice intervals are bounded by a constant. Therefore their result is compatible with ours.
\end{remark}

It is not clear to us whether the upper bounds appearing in the Reduced Main Theorem are optimal. Indeed, the following simpler problem is open:

\noindent
{\bf Problem.} {\em Give a positive integer $N\ge 2$, does there exist a real number $\ell_0$ such that if $f\in\mathcal{A}_*$ has critical order $\ell>\ell_0$ and satisfies the following property: each nice interval has at most $N$ children, then $f$ has a wild attractor?}

In \cite[Section 6]{B} Bruin gave a sufficient condition in terms of a different combinatorial language (the kneading map) for existence of wild attractors. Note that Bruin's condition prohibits the existence of saddle-node like returns which however does not seem to be an obstruction for existence of wild attractors.

\subsection{Idea of proof of the Reduced Main Theorem}
We introduce a notion, ``empty space'',  for each small symmetric nice interval, at the beginning of $\S4$. Roughly speaking, we fix a suitable neighborhood $\Lambda$ of $\omega(c)$, and consider the subset $\Lambda(T)$ of $T$ consisting of points which return to $T$ before escaping the neighborhood $\Lambda$. The ``empty space'' $\xi(T)$ measures the proportion of $T\setminus \Lambda(T)$ in $T$: The smaller $\xi(T)$ is, the smaller is the proportion of $T\setminus \Lambda(T)$ in $T$. The assumption that $\omega(c)$ is a Cantor attractor implies that $\xi(T)\to 0$ as $|T|\to 0$.

Most of our effort is to estimate the distortion $\xi(T)$ under unimodal pull back. There are two important principles lying in the proof:
\begin{itemize}
\item If a symmetric nice interval $T$ has many children, then all young children $J$ are $\tau$-nice with a large $\tau$, i.e., all the return domains lie deep inside $J$.
\item If a symmetric  nice interval $T$ is $\tau$-nice and $J$ is a child of $T$, then $\xi(J)/\xi(T)$ is bounded away from zero. Moreover, if $\tau$ is large and $\xi(T)$ is close to zero, then $\xi(J)$ becomes much bigger than $\xi(T)$.
\end{itemize}

The proof of the Reduced Main Theorem occupies the next three sections.
In \S\ref{sec:pre}, we study the size of children of a given nice interval and the geometry of their return domains. In \S\ref{sec:emptyspace}, we study the distortion of ``empty space'' under pull backs.
In both cases, the presence of central cascade is an unpleasant situation and responsible for most complications of the arguments. The proof of the Reduced Main Theorem is completed in \S\ref{sec:reducedmainthm}.

\section{Real bounds}\label{sec:pre}
Consider a map $f\in\mathcal{A}_*$ with a recurrent critical point $c$. We say a constant is {\em universal} if it depends only on $\ell$. In this section, we shall obtain upper bounds of length of children of given nice intervals and the geometry of their return domains. The main result is Proposition~\ref{prop:sizeofchildren}.

\subsection{Preliminaries}

The Koebe principle is the main tool to control distortion in one-dimensional dynamics. The following version was taken from \cite[Proposition 1]{BRSS}, whose proof is based on previous results in the literature, in particular \cite[Theorem C]{SV}.
\begin{thm}\label{thm:koebe}
There exists $\eta(f)>0$ such that the following holds. Let $s\ge 1$ be an integer and let $T$ be an interval. Assume that $f^s|T$ is a diffeomorphism onto its image and that $|f^s(T)|<\eta(f)$. If $J$ is a subinterval of $T$ such that $f^s(J)$ is $\tau$-well inside $f^s(T)$, then
\begin{itemize}
\item [1.] for any $x,y\in J$, $$0.9\left(\dfrac{\tau}{1+\tau}\right)^2\le \dfrac{|Df^s(x)|}{|Df^s(y)|}\le \dfrac{1}{0.9}\left(\dfrac{1+\tau}{\tau}\right)^2;$$
\item [2.] $J$ is $\tau'$-well inside $T$, where $\tau'=\dfrac{0.9\tau^2}{1+2\tau}$.
\end{itemize}
\end{thm}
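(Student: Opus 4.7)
The plan is to deduce this directly from the standard Koebe principle for $C^3$ unimodal maps with non-flat critical point and all periodic points hyperbolic repelling, as developed in \cite{SV} and packaged in precisely the present form in \cite[Proposition 1]{BRSS}. Since $f^s|_T$ is a diffeomorphism, the critical point $c$ lies outside the interior of each of $T, f(T), \ldots, f^{s-1}(T)$, so the orbit of $T$ stays away from the critical set and every intermediate $f$ restricts there to a $C^3$ diffeomorphism.

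The core of the argument is a cross-ratio expansion estimate. For $J \subset T$ with components $L, R$ of $T \setminus J$, define the Poincar\'e-type cross-ratio $B(T,J) = |T|\,|J|/(|L|\,|R|)$. The non-trivial input is that for every $\varepsilon > 0$ there exists $\eta(f) > 0$ such that whenever $f^s|_T$ is a diffeomorphism and $|f^s(T)| < \eta(f)$, we have $B(f^s(T), f^s(J))/B(T,J) \geq 1 - \varepsilon$. This is exactly \cite[Theorem C]{SV} and serves as the substitute for negative Schwarzian derivative; it is here that the hypothesis $f \in \mathcal{A}_*$, in particular the hyperbolicity of periodic orbits, enters essentially. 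Taking $\varepsilon$ small enough that $(1-\varepsilon)^{-1} \leq 1/0.9$, and combining with the classical M\"obius-based Koebe lemma (which says that for a cross-ratio-preserving orientation-preserving diffeomorphism $\varphi$ with $\varphi(J)$ being $\tau$-well inside $\varphi(T)$, the derivative ratio on $J$ is sandwiched between $(\tau/(1+\tau))^2$ and $((1+\tau)/\tau)^2$), yields part~(1).

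For part~(2), I would apply part~(1) to the enlarged interval $J' := J \cup L$, where $L$ is one of the two components of $T \setminus J$; note that $f^s(J')$ still has a Koebe space of at least $\tau$ on the remaining side, so the hypothesis is met. Integrating the derivative bound from part~(1) separately over $L$ and over $J$, and using $|f^s(L)| \geq \tau |f^s(J)|$, one obtains $|L|/|J| \geq 0.9\,\tau^2/(1+2\tau)$ by a direct calculation; this is the origin of the formula $\tau' = 0.9\,\tau^2/(1+2\tau)$. The only real obstacle is the cross-ratio expansion estimate, whose proof uses substantial $C^3$ machinery (the results of \cite{K} on the absence of wandering intervals and hyperbolicity near periodic orbits, followed by the surgery arguments of \cite{SV}); but since our statement is a verbatim restatement of \cite[Proposition 1]{BRSS}, my ``proof'' reduces to invoking that proposition.
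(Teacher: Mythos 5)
Your proposal matches the paper's treatment exactly: the paper does not prove this theorem but cites it verbatim as \cite[Proposition 1]{BRSS}, noting that its proof rests on \cite[Theorem C]{SV}, which is precisely the cross-ratio expansion estimate you invoke. The extra sketch you give of how cross-ratio expansion plus the classical M\"obius Koebe lemma yields parts (1) and (2) is a faithful outline of what happens inside those references, but in the end both you and the paper reduce the statement to the same citation.
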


Given a symmetric nice interval $I\ni c$, we shall use the following notation: $I^0=I$ and $I^{k+1}$ is the return domain of $I^k$ that contains $c$.
The sequence
$$I^0\supset I^1 \supset I^2 \supset \cdots,$$
is often called the {\it principal nest} starting from $I$.
The first return map $R_{I^n}:I^{n+1} \to I^n$ is called {\it central} if $R_{I^n}(c)\in I^{n+1}$ and {\it non-central} otherwise. We say that $R_{I^n}: I^{n+1}\to I^n$ is {\em high} if
$R_{I^n}(I^{n+1})\ni c$ and {\em low} otherwise.

The following Real Bounds theorem was first proved by Martens~\cite{M} in the case that $f$ has negative Schwarzian derivative, and extended to general smooth unimodal maps in~\cite{K}.

\begin{thm}\label{thm:realbounds}
There exists a universal constant $\rho>0$ such that for any small symmetric nice interval $I^0\ni c$,
the following hold:
\begin{enumerate}
\item[(i)] If $R_{I^0}: I^1\to I^0$ is non-central and low, then $I^1$ is $\rho$-well inside $I^0$;
\item[(ii)] If $R_{I^0}: I^1\to I^0$ is non-central and high, then $I^2$ is $\rho$-well inside $I^1$;
\item[(iii)] If $I^1$ is not $\rho$-well inside $I^0$, then $f^{s-1}$ maps a neighborhood of $f(I^1)$
diffeomorphically onto a $\rho$-scaled neighborhood of $I^0$, where $s$ is the return time of $c$ into $I^0$. In particular,
the map $f^{s-1}|f(I^1)$ has uniformly bounded distortion: for any $x, y\in I^1$,
$$|Df^{s-1}(f(x))|\le K(\rho) |Df^{s-1}(f(y))|,$$
where $K(\rho)>1$ is a constant.
\end{enumerate}
\end{thm}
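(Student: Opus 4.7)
The plan is to follow the Martens--Kozlovski strategy for real bounds, whose driving dichotomy is: either $I^1$ sits $\rho$-well inside $I^0$, or the monotone branch $f^{s-1}|f(I^1)$ admits a diffeomorphic extension over a definite Koebe neighborhood of $I^0$, where $s$ is the return time of $c$ to $I^0$. I would establish (iii) first, as (i) and (ii) then reduce to applying Koebe (Theorem~\ref{thm:koebe}) to this extension.

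For (iii), let $G$ be the maximal open interval containing $f(I^1)$ on which $f^{s-1}$ is a diffeomorphism and set $\hat G = f^{s-1}(G)$. The key observation is that niceness of $I^0$ forces $f^i(I^1) \cap I^0 = \emptyset$ for $1 \le i \le s-1$, so the boundary values $f^i(\partial I^1)$ stay outside $I^0$. Arguing by contradiction, suppose $\hat G$ fails to contain $(1+2\rho) I^0$. Pulling back the two components of $\hat G \setminus I^0$ along the orbit, the maximality of $G$ forces one such pull-back to hit a critical point in the interior of some $f^i(I^1)$; tracking this critical pull-back backwards produces an intermediate symmetric nice interval strictly between $I^1$ and $I^0$. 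The size constraints on this intermediate interval, combined with the degree-$\ell$ fold at $c$, yield a definite Koebe collar and hence the bounded distortion conclusion via Theorem~\ref{thm:koebe}.

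For (i), non-centrality combined with the low hypothesis gives $R_{I^0}(I^1) \subset I^0 \setminus I^1$, so Koebe applied to the extension from (iii) bounds $|R_{I^0}(I^1)|/|I^0|$ away from $1$; pulling back through the non-flat fold yields $|I^1|/|I^0| \le 1 - \rho$. For (ii), the high case, non-centrality guarantees a gap inside $I^0$ that $R_{I^0}(I^1)$ avoids, and applying the argument of (iii) to the pair $(I^1, I^2)$ rather than $(I^0, I^1)$ places $I^2$ $\rho$-well inside $I^1$.

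The principal obstacle is the central-cascade situation in (iii), where critical pull-backs appear repeatedly along the backward orbit of $f(I^1)$ under $f^{s-1}$ and the maximal diffeomorphic extension $G$ is cut off by nearby critical values well before reaching the natural Koebe scale. I would handle this by decomposing the return path into a principal nest of intermediate symmetric nice intervals and inducting on its depth, using the non-flat local form $|x|^\ell$ to track the loss of Koebe space at each fold: each fold converts a $\tau$-well-inside relation into roughly a $c\,\tau^{1/\ell}$-well-inside one, which keeps the universal $\rho$ dependent only on $\ell$, as the statement requires. This is also where the $C^3$ non-flatness hypothesis enters essentially, via the cross-ratio/Koebe estimate of Theorem~\ref{thm:koebe} replacing the negative-Schwarzian argument of Martens' original proof.
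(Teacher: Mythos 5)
This theorem is not proved in the paper at all: the text immediately before the statement says ``The following Real Bounds theorem was first proved by Martens~\cite{M} in the case that $f$ has negative Schwarzian derivative, and extended to general smooth unimodal maps in~\cite{K}.'' The paper's ``proof'' is a citation to Martens and Kozlovski, so there is no internal argument to compare yours against; you should have recognized this as a standard imported result rather than attempting a fresh proof.

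Evaluating your sketch on its own terms, the flavor is right (the dichotomy between Koebe space and an extendible branch, the $\ell$-th-root loss at the fold, the central cascade as the danger zone), but two steps would not survive being written out. First, in your argument for (iii) you claim that maximality of $G$ forces a pull-back ``to hit a critical point in the interior of some $f^i(I^1)$'' for $1\le i\le s-1$; this is impossible precisely because $f^{s-1}|f(I^1)$ is a diffeomorphism, so $c\notin f^i(I^1)$ for those $i$. The critical point obstructing the extension lies in $f^i(G)\setminus f^i(f(I^1))$, and extracting from this the intermediate symmetric nice interval you want takes a separate pull-back argument (the ``smallest interval'' device in Martens' proof) that you have not supplied. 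Second, your treatment of (ii) is circular: applying the dichotomy of (iii) to the pair $(I^1,I^2)$ merely tells you that \emph{either} $I^2$ is $\rho$-well inside $I^1$ \emph{or} the return branch of $I^1$ extends with Koebe space; it does not rule out the second horn, which is exactly what (ii) asserts cannot occur. The high non-central hypothesis must be used to produce definite space around $I^2$ directly (via the preimage of $c$ inside $I^1$ under $f^s$ splitting $I^1$), and that argument is absent. If you need this statement, cite \cite{M} and \cite{K} as the paper does.
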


A sequence of open intervals $\{T_j\}_{j=0}^s$ is called a {\it chain} if for each $j=0,1,\ldots, s-1$, $T_j$ is a component of $f^{-1}(T_{j+1})$. The {\it order} of the chain is the number of $j$'s with $0\le j <s$ such that $T_j$ contains the critical point $c$.

The following theorem is an improvement of \cite[Theorem C(1)]{SV} for unimodal maps, which gives relationship between the constants $\tau$ and $\tau'$.
\begin{thm}\label{thm:SV}
Assume that $f$ is not infinitely renormalizable. For any $\tau>0$ there exists $\tau'>0$, such that the following holds. Let $c\in J\subset I$ be small symmetric nice intervals such that $J$ is $\tau$-well inside $I$. Then for any $x\in \dom(J)$, $\mathcal L_x(J)$ is $\tau'$-well inside $\mathcal L_x(I)$. Moreover, for each constant $\tau_*>0$ there exist constants $C=C(\tau_*)>0, \alpha=\alpha(\tau_*)>0$ such that if $\tau>\tau_*$, then we can choose $\tau'$ such that
\begin{equation}\label{eqn:tau'quantified}
\tau'\ge C\tau^\alpha.
\end{equation}
\end{thm}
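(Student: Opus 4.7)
The plan is to transfer the $\tau$-well-inside property of $J\subset I$ backward along the canonical pull-back chain terminating at $\mathcal L_x(J)\subset \mathcal L_x(I)$. I would form the chain $\{S_j\}_{j=0}^{n}$ with $S_n=I$, $S_0=\mathcal L_x(I)$, and $f(S_j)\subset S_{j+1}$, together with the parallel chain $\{T_j\}$ with $T_0=\mathcal L_x(J)$, $T_n=J$, and $T_j\subset S_j$. The problem reduces to showing: if $T_n$ is $\tau$-well inside $S_n$, then $T_0$ is $\tau'$-well inside $S_0$, with $\tau'$ quantitatively controlled from below.

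The main step is to decompose the chain at the critical indices $j_1<j_2<\cdots<j_m=n$ where $c\in S_{j_i}$. On each diffeomorphic segment between consecutive critical indices, the Koebe principle (Theorem~\ref{thm:koebe}) transforms a $\sigma$-well-inside property into a $0.9\sigma^2/(1+2\sigma)$-well-inside one, provided Koebe space is present. At a critical pull-back, the local model $x\mapsto x^\ell$ transforms a $\sigma$-well-inside inclusion into an approximately $\sigma^{1/\ell}$-well-inside one. Iterating these two transformations backward along the chain yields the desired $\tau'$.

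The hard part is that the number $m$ of critical pull-backs in the chain is a priori unbounded: whenever the chain traverses a long central cascade inside some level of the principal nest of $I$, there are many consecutive critical indices, and naively each one costs a factor $1/\ell$ in the exponent of $\sigma$. I would resolve this by grouping all critical pull-backs in a common central cascade and invoking Theorem~\ref{thm:realbounds}(iii): across such a cascade the branch $f^{s-1}$ has distortion bounded by $K(\rho)$ on a definite Koebe neighborhood, so after absorbing the Koebe factor the entire cascade contributes only a single $\sigma\mapsto\sigma^{1/\ell}$-type transformation rather than one per cascade step. For the remaining non-central transitions, parts (i) and (ii) of Theorem~\ref{thm:realbounds} supply $\rho$-well-inside geometry, letting Koebe be applied with universal constants. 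Non-infinite renormalizability prevents arbitrarily deep all-central descents, so the number of macroscopic steps is bounded in terms of $I$ and $J$ alone, independently of $x$.

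For the quantitative estimate $\tau'\ge C\tau^\alpha$ with $C,\alpha$ depending only on $\tau_*$ and universal data, I would check that each macroscopic step transforms $\sigma$ via a power law $\sigma\mapsto c(\tau_*)\sigma^\beta$ with $\beta$ drawn from a finite universal set of exponents (notably $2$ from Koebe steps and $1/\ell$ from critical steps), valid whenever $\sigma$ stays above a threshold depending on $\tau_*$. Composing finitely many such power laws is again a power law $\tau\mapsto C\tau^\alpha$, yielding the claimed bound. The key obstacle throughout is the central-cascade analysis; once that is in place, both the qualitative and quantitative statements follow by routine bookkeeping.
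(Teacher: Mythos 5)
Your approach differs substantially from the paper's, and while the plan is structurally coherent, there are gaps that would need real work to close, and you miss the reduction that makes the paper's proof clean.

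The paper first reduces to the case $x\in\dom(J)\setminus I$ (the case $x\in I$ is dispatched by one application of Theorem~\ref{thm:koebe} plus non-flatness at $c$). Once $x\notin I$, the decisive observation is that the first entry map to each nice interval $I^{m(i)}$ in the principal nest at non-central return moments is a \emph{diffeomorphism} that extends diffeomorphically to the previous level $I^{m(i-1)}$ (this extension is supplied by Lemma~\ref{lem:Ffull}). Thus the whole argument becomes a finite product of Koebe steps
\[
1+2\tau' = \prod_{i=1}^{k+1}(1+2\tau_i'), \qquad \tau_i' = \tfrac{0.9\,\tau_i^2}{1+2\tau_i},
\]
and the critical point never enters the chain: no factor $1/\ell$ ever appears in the exponent, and the lower bound $\tau'\ge C\tau^\alpha$ comes from separating the ``large'' $\tau_i$ using Theorem~\ref{thm:realbounds} and a product estimate. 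Your proposal instead confronts critical pull-backs head-on, so every central-cascade traversal costs an $\ell$th root, and you are forced to argue that a full cascade contributes only one such loss. That is where the proposal is underspecified: Theorem~\ref{thm:realbounds}(iii) gives bounded distortion for \emph{one} return $f^{s-1}$ with a definite Koebe neighborhood, not across a cascade of arbitrary length. Iterating that bound naively compounds the distortion. The paper's route out of exactly this difficulty is Lemma~\ref{lem:Ffull}: it replaces cascade iteration by a single diffeomorphic extension of the entry map to the top of the cascade, which is what preserves Koebe space without degradation. Your argument would need an analogue of that lemma, and as stated does not supply one. You also need the finer fact (used in the paper's quantitative step) that all but two of the gaps $\tau_i$ are bounded below by the universal $\rho$; without this, the number of macroscopic steps that actually ``cost'' in the exponent is not bounded.

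There is also a small but real error in your chain setup: if the chain $\{T_j\}_{j=0}^n$ runs from $\mathcal{L}_x(J)$ to $J$, then the enlarged chain $\{S_j\}$ satisfies $S_n=I$, but $S_0$ is the pull-back of $I$ by the first entry time of $x$ into $J$, which is in general a \emph{proper subinterval} of $\mathcal{L}_x(I)$ (since $x$ may enter $I$ several times before entering $J$). You are therefore trying to prove a strictly stronger statement than claimed; that is acceptable if it is true, but it should not be asserted that $S_0=\mathcal{L}_x(I)$. In summary, the strategy is not the paper's and could in principle be pushed through, but the central-cascade step as written is a gap, not a reduction to routine bookkeeping, and the cleaner route is the paper's reduction to diffeomorphic first-entry maps, which avoids critical pull-backs altogether.
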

\begin{proof}
By Theorem~\ref{thm:koebe} and non-flatness of the critical point, it suffices to prove the statement for $x\in \dom(J)\setminus I$. Let $I^0=I$. Let $m(0)=0$ and $1\le m(1)<m(2)<\cdots$ be all the non-central return moments, i.e., the return map $R_{I^{m(k)-1}}$ is non-central. Since $f$ is not infinitely renormalizable, $|I^n|\to 0$ as $n\to\infty$, provided that $I$ is small enough.
So there exists $k\ge 0$  such that
$$I^0\supset I^{m(1)}\supset \cdots \supset I^{m(k)}\supsetneq J\subset  I^{m(k+1)}.$$

Define $\tau_i$, $1\le i\le k+1$ such that
$$\frac{|I^{m(i-1)}|}{|I^{m(i)}|}:=1+2\tau_i, \mbox{ for } 1\le i\le k,$$
and $$\frac{|I^{m(k)}|}{|J|}:= 1+ 2\tau_{k+1}.$$
Then
\begin{equation}\label{eqn:tauproduct}
1+2\tau =\frac{|I|}{|J|}=\prod_{i=1}^{k}\frac{|I^{m(i-1)}|}{|I^{m(i)}|}\cdot\dfrac{|I^{m(k)}|}{|J|}=\prod_{i=1}^{k+1} (1+2\tau_i).
\end{equation}

For each $1\le i\le k$, the first entry map $R_{I^{m(i)}}:\mathcal L_x(I^{m(i)})\to I^{m(i)}$ can be extended diffeomorphically onto $I^{m(i-1)}$ (see Lemma~\ref{lem:Ffull}). By Theorem~\ref{thm:koebe}, $\mathcal L_x(I^{m(i)})$ is $\tau_i'$-well inside $\mathcal L_x(I^{m(k-1)})$, where $\tau_i'=0.9\frac{\tau_i^2}{1+2\tau_i}$. Similarly, since $I^{m(k)}\supset J\supset I^{m(k+1)}$, the first entry map $R_J:\mathcal L_x(J) \to J$ can be extended diffeomorphically onto $I^{m(k)}$, and $\mathcal L_x(J)$ is $\tau_{k+1}'$-well inside $\mathcal L_x(I^{m(k)})$, where $\tau_{k+1}'=0.9\frac{\tau_{k+1}^2}{1+2\tau_{k+1}}$. In conclusion, $\mathcal{L}_x(J)$ is $\tau'$-well inside $\mathcal{L}_x(I)$, where
\begin{equation}\label{eqn:esttau'}
1+2\tau'=\prod_{i=1}^{k+1} (1+2\tau_i').
\end{equation}

Let us prove that $\tau'$ is bounded away from zero. By Theorem~\ref{thm:realbounds}, for each $2\le i\le k$, we have $\tau_i\ge \rho$. So we are done if $k\ge 2$. If $k\le 1$, then by (\ref{eqn:tauproduct}), $(1+2\tau_i)^2\ge 1+2\tau$ holds for $i=1$ or $2$, thus $\tau'$ is bounded from below by a positive constant depending on $\tau$.

Now assume $\tau$ is bounded from below by a constant $\tau_*>0$ and let us prove (\ref{eqn:tau'quantified}). Let
$\rho_*=\min (\tau_*, \rho)$, and let
$$\mathcal{I}=\{1\le i\le k+1: (1+2\tau_i)^4 > 1+ 2\rho_*\}.$$
Then $\{2,\ldots, k\}\subset \mathcal{I}$.  So by (\ref{eqn:tauproduct}),
\begin{equation}\label{eqn:calIprod}
\prod_{i\in\mathcal{I}} (1+2\tau_i)\ge \frac{1+2\tau}{\sqrt{1+2\rho_*}}\ge \sqrt{1+2\tau}.
\end{equation}
For each $i\in\mathcal{I}$, $\tau_i'$ is bounded away from zero, so there exists a constant $\mu\in (0,1)$ such that
$\tau_i'\ge \mu \tau_i$. Thus by (\ref{eqn:esttau'}),
$$1+2\tau'\ge \prod_{i\in\mathcal{I}} (1+2\mu \tau_i)\ge \prod_{i\in\mathcal{I}} (1+2\tau_i)^\mu.$$
Together with (\ref{eqn:calIprod}), this implies
$$1+2\tau'\ge (1+2\tau)^{\mu/2}.$$
The inequality (\ref{eqn:tau'quantified}) follows.
\end{proof}

Recall that a child $J\ni c$ of a symmetric nice $I\ni c$ is a unimodal pull back of $I$ by $f^s$ for some $s\ge 1$. The integer $s$ is called a {\em transition time} from $J$ to $I$.

\begin{lem}\label{lem:childtime}
Let $J$ be a child of $I$ with transition time $s$, then for each $x\in J$, the return time of $x$ to $J$ is not less than $s$.
\end{lem}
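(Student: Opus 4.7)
The plan is to argue by contradiction, exploiting the niceness of $J$ (itself nice as a pull back of the nice interval $I$) together with the unimodality condition $c\notin f^j(J)$ for $1\le j\le s-1$.

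I would first note that $J\subset I$: both $J$ and $I$ are components of pull backs of $I$ (at iterates $s$ and $0$) sharing the critical point $c$, so the standard niceness dichotomy (pull backs are nested or disjoint) forces $J\subset I$. Suppose, for contradiction, that there exist $y\in J$ and an integer $1\le r\le s-1$ with $f^r(y)\in J$. Let $A$ denote the component of $f^{-r}(J)$ containing $y$. Since $J$ is nice and $A\cap J\ni y$ is nonempty, niceness yields $A\subset J$. Combined with $A\subset J$ and $c\notin f^j(J)$ for $1\le j\le s-1$, we obtain $c\notin f^j(A)$ for every $1\le j\le r-1$.

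The heart of the argument is a dichotomy on whether $c\in A$. If $c\notin A$, then $c\notin f^j(A)$ for every $0\le j\le r-1$, so $f^r|_A$ is a diffeomorphism; since $A$ is a full component of $f^{-r}(J)$, the map $f^r:A\to J$ is surjective, so some $y^*\in A$ satisfies $f^r(y^*)=c$, yielding $c\in f^r(A)\subset f^r(J)$, which contradicts $c\notin f^r(J)$.

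In the remaining case $c\in A$, the map $f^r|_A$ folds at $c$, so $A$ becomes a unimodal pull back of $J$ by $f^r$; composing with the unimodal map $f^s|_J$ shows that $A$ is a unimodal pull back of $I$ by $f^{s+r}$, and a standard pull-back-and-niceness argument identifies $A$ with the full component of $f^{-(s+r)}(I)$ containing $c$. I plan to close this case by exploiting the fact that $c\in A$ forces $f^r(c)\in J$, i.e.\ a return of $c$ to $J$ at time $r<s$, and deriving the contradiction from the maximality of $J$ as the component of $f^{-s}(I)$ containing $c$ together with the nested structure of the children of $I$. I expect this second case to be the main obstacle, as closing it cleanly will likely require using the combinatorics of the pull-back nest more delicately than the one-step niceness used in the first case.
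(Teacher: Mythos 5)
Your argument handles the case $c\notin A$ cleanly, but the second case $c\in A$ is genuinely open, and you acknowledge this yourself (``I plan to close this case\dots I expect this second case to be the main obstacle''). That is a real gap, not a routine omission: once $c\in A$ you only get $f^r(c)\in J$, and to turn this into a contradiction you need to know that $f^r(c)$ lands \emph{outside} $J$, which is precisely the content of the lemma applied to $x=c$. Appealing to ``the maximality of $J$ as the component of $f^{-s}(I)$ containing $c$'' is circular unless you import the chain structure explicitly; and once you do import it, the case split becomes unnecessary.

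The paper avoids the dichotomy entirely. Let $\{J_i\}_{i=0}^{s}$ be the chain with $J_0=J$ and $J_s=I$. Unimodality of the pull back gives $c\notin J_i$ for $1\le i\le s-1$. Since $J_i$ is a pull back of $I$ by $f^{s-i}$ and $J$ is a pull back by $f^{s}$, with $s\ge s-i$, if $J_i\cap J\neq\emptyset$ the niceness dichotomy forces $J\subset J_i$ and hence $c\in J_i$, a contradiction; so $J_i\cap J=\emptyset$ for $1\le i\le s-1$. As $f^i(x)\in J_i$ for every $x\in J$, no $x\in J$ can return to $J$ before time $s$. This single observation ($J_i\cap J=\emptyset$) is exactly what you would need to close your Case 2, and it also subsumes your Case 1, so the contradiction-by-pullback-of-$J$ strategy, while not wrong, is a detour.
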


\begin{proof}
Let $\{J_i\}_{i=0}^s$ be the chain with $J_0=J$ and $J_s=I$. Since $f^{s-1}:J_1\to J_n$ is diffeomorphic, $c \notin J_i$ for each $1\le i \le s-1$. Therefore $J_i\cap J=\emptyset \, (1\le i \le s-1)$, since otherwise $J_i\supset J\ni c$, which is impossible. For each $x\in J$, $f^i(x)\in J_i \,(1\le i \le s-1)$. If $f^k(x)\in J$, then $k\ge s$.
\end{proof}

\begin{lem}\label{lem:wellinsidenice}
Let $I\ni c$ be a small nice interval and let $J$ be a child of $I$. Assume that $J$ is $\tau$-well inside $I$. Then $J$ is a $\tau'$-nice interval, where $\tau'>0$ depends only on $\tau$. Moreover, when $\tau$ is sufficiently large, we have $\tau'\ge C\tau^\alpha$ for some constants $C>0,\alpha>0$.
\end{lem}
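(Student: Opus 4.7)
The plan is to reduce the statement to a single application of Theorem~\ref{thm:SV} together with the pullback-nesting dichotomy for nice intervals. Let $W$ be a return domain of $J$ with first-return time $r$; by Lemma~\ref{lem:childtime}, $r\ge s$, where $s$ is the transition time of $J$ as a child of $I$. Pick any point $y\in W$, so that $W=\mathcal{L}_y(J)$. Applying Theorem~\ref{thm:SV} to the symmetric nice pair $(J,I)$ at $y$ immediately gives that $W$ is $\tau_1$-well inside $V:=\mathcal{L}_y(I)$, where $\tau_1=\tau_1(\tau)>0$, and the quantified form supplies $\tau_1\ge C\tau^{\alpha}$ once $\tau$ is bounded below by a fixed positive constant.

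Since $V$ and $J$ are both pullbacks of the nice interval $I$ sharing the common point $y$, the dichotomy ``disjoint or nested'' from the preliminaries forces either $V\subseteq J$ or $V\supsetneq J$. In the first case, each gap of $J\setminus W$ contains the corresponding gap of $V\setminus W$, so $W$ is $\tau_1$-well inside $J$ and we conclude with $\tau'=\tau_1$. In the second case, $V\supset J\ni c$ forces $V$ to contain $c$, and since the central entry domain is the unique entry domain of $I$ containing $c$, $V=I^1$, and we are left with $W\subset J\subsetneq I^1$.

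To handle this residual case, I would replace the outer interval by $I^1$ and iterate the argument. The key step is to verify that $J$ is itself well inside $I^1$ with a controlled modulus: one shows, using the real bounds of Theorem~\ref{thm:realbounds}, that the cost of passing from the hypothesis ``$J$ is $\tau$-well inside $I$'' to ``$J$ is $\tau_2$-well inside $I^1$'' is universally bounded, with $\tau_2=\tau_2(\tau)$. Iterating along the strictly nested sequence of central entry domains $I=I^0\supsetneq I^1\supsetneq I^2\supsetneq\cdots$, whose members shrink to $\{c\}$ in the non-renormalizable setting, one eventually lands in the first alternative $V\subseteq J$ at some level, yielding the claim.

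The main obstacle is preserving the polynomial quantitative bound $\tau'\ge C\tau^{\alpha}$ uniformly in the depth of the central cascade, because naive iteration of Theorem~\ref{thm:SV} could in principle compound the loss. I would address this by bundling the entire cascade into a single effective Koebe step: the bounded-distortion statement of Theorem~\ref{thm:realbounds}(iii), available across non-central moments, converts a long cascade into a diffeomorphic pull-back with universally controlled distortion, onto which Theorem~\ref{thm:SV} is applied only once, so the final modulus depends on $\tau$ only through the single polynomial estimate $\tau_1\ge C\tau^\alpha$ already available at the first step.
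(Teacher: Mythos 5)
There is a genuine gap. Observe first that your alternative collapses to one case almost always: every child $J$ of $I$ is contained in the first child $I^1$ (both are critical pull-backs of $I$, so they are nested, and $J$ has transition time at least the return time of $c$). Thus for $y\in W\subset J$ one always has $V=\mathcal L_y(I)=I^1$, so the branch $V\subseteq J$ occurs only when $J=I^1$; generically you are in the residual case $J\subsetneq I^1=V$. Your treatment of that case is the actual content of the lemma, and it is not carried out. The step you would need — that $J$ being $\tau$-well inside $I$ forces $J$ to be $\tau_2(\tau)$-well inside $I^1$ with a polynomial modulus — is simply not a consequence of the hypotheses: when $R_I$ is a low central return, $I^1$ is comparable in size to $I$, but a child $J$ buried at the bottom of a long central cascade need not sit well inside $I^1$ with any modulus that you control by $\tau$ (Lemma~\ref{lem: boundsofchild} only gives a universal bound relative to $I$, not to $I^1$). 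Moreover the proposed iteration has no a priori bound on its depth, and the ``bundling'' appeal to Theorem~\ref{thm:realbounds}(iii) is too vague to repair this: that statement controls the distortion of one saddle-node return, not the cumulative modulus loss along the cascade.

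The paper's argument sidesteps the issue by applying Theorem~\ref{thm:SV} at the \emph{image point} $f^s(x)$ rather than at $x$ itself. Writing $U=f^s(\mathcal L_x(J))=\mathcal L_{f^s(x)}(J)$, Theorem~\ref{thm:SV} gives that $U$ is $\tau_1$-well inside $\mathcal L_{f^s(x)}(I)\subset I$, hence $\tau_1$-well inside $I$. One then pulls $U$ back through the unimodal map $f^s\colon J\to I$ in a single Koebe-plus-non-flatness step: $f^{s-1}$ carries a neighbourhood of $f(J)$ diffeomorphically onto $I$, so the Koebe space is the whole interval $I$, and the modulus $\tau_1$ degrades only to $\tau_1^{1/\ell}$ up to constants. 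That is why the final bound $\tau'\ge C\tau^{\alpha}$ comes out cleanly, with no cascade to iterate over. The decisive structural fact you did not use is precisely that $J$ is a \emph{unimodal pull-back of the whole interval $I$}; once the entry domain is pushed forward by $f^s$, it lands inside $I$ and the Koebe estimate is applied against $I$ rather than against the possibly comparably-sized $I^1$.
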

\begin{proof} Let $s$ be the transition time of $J$ into $I$.
Take an arbitrary $x\in D(J)\cap J$ and let $r$ be the first return time of $x$ into $J$. By Lemma~\ref{lem:childtime}, $r\ge s$.
Let $U:=f^s(\mathcal{L}_x(J))$. By Theorem~\ref{thm:SV}, $\mathcal{L}_{f^s(x)}(J)$ is $\tau_1$-well inside $I$, where $\tau_1>0$ is a constant depending only on $\tau$, and when $\tau>1$, there exist constants $C_1>0$ and $\alpha_1>1$ such that $\tau_1\ge C_1\tau^{\alpha_1}$.
Since $f^{s-1}$ maps a neighborhood of $f(J)$ diffeomorphically onto $I$, by the Koebe principle and non-flatness of the critical point, $\mathcal{L}_x(J)$ is $\tau'$-well inside $J$, for some constant $\tau'>0$ depending only on $\tau_1$. Moreover when $\tau$ is sufficiently large, $\tau_1>1$, and  we can choose $\tau'=C_0\tau_1^{1/\ell}$, where $C_0$ is a constant.   Thus the lemma holds with $C=C_0C_1^{1/\ell}$ an $\alpha=\alpha_1/\ell$.
\end{proof}

\begin{lem} \label{lem: boundsofchild}
There exists a universal constant $\rho_0>0$ such that if $I\ni c$ is a small nice interval and $J\ne I^1$ is a child of $I$, then $J$ is $\rho_0$-well inside $I$.
\end{lem}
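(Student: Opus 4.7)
The plan is to argue by splitting into two cases according to the position of the first return domain $I^1$ inside $I$, guided by the dichotomy in Theorem~\ref{thm:realbounds}. I first record a preliminary observation: any child $J$ of $I$ is contained in $I^1$. To see this, consider the chain $\{J_i\}_{i=0}^s$ with $J_0=J$ and $J_s=I$; the set $J_{s_1}=f^{s_1}(J)$ contains $f^{s_1}(c)\in I$, and by niceness of $I$ this forces $J_{s_1}\subseteq I$. Since $J$ is the component of $f^{-s_1}(J_{s_1})$ containing $c$ while $I^1$ is the component of $f^{-s_1}(I)$ containing $c$, it follows that $J\subseteq I^1$. The hypothesis $J\ne I^1$ then upgrades this to $J\subsetneq I^1$.

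If $I^1$ is $\rho$-well inside $I$ (Case A), where $\rho$ is the universal constant from Theorem~\ref{thm:realbounds}, then $J\subseteq I^1$ immediately gives that $J$ is $\rho$-well inside $I$, and one may take $\rho_0=\rho$.

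If $I^1$ is not $\rho$-well inside $I$ (Case B), then Theorem~\ref{thm:realbounds}(iii) furnishes a neighborhood $\tilde W\supset f(I^1)$ on which $f^{s_1-1}$ is a diffeomorphism onto $(1+2\rho)I$, with distortion bounded by a universal constant $K=K(\rho)$. The strategy is to propagate this Koebe extension backward to a neighborhood of $c$ in three steps. First, $J_{s_1}:=f^{s_1}(J)\subseteq I\subseteq(1+2\rho)I$ has $|J_{s_1}|\le|I|$, and so $J_{s_1}$ is at least $\rho$-well inside $(1+2\rho)I$. Second, pulling back through the bounded-distortion diffeomorphism $f^{s_1-1}|\tilde W$ by Theorem~\ref{thm:koebe} yields that $f(J)$ is $\rho'$-well inside $\tilde W$ for some universal $\rho'>0$. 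Third, pulling back once more across the critical fold at $c$ and using the non-flatness of order $\ell$, one produces a symmetric neighborhood $V\ni c$ with $J\subset V$ and $J$ being $\rho_1$-well inside $V$, where $\rho_1=\rho_1(\rho',\ell)>0$ is universal.

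The main obstacle is the final comparison needed to conclude: to derive that $J$ is $\rho_1$-well inside $I$ from the preceding, one must verify $V\subseteq I$. The set $V$ is a symmetric unimodal pull back of $(1+2\rho)I$ by $f^{s_1}$ containing $c$, while $I^1$ is the analogous pull back of $I$; controlling the excess $|V|/|I^1|$ requires careful distortion bookkeeping using $K(\rho)$, together with the quantitative form of Case B (namely $|I^1|/|I|>1/(1+2\rho)$) and the niceness of $I$. Once this is established, the lemma follows with $\rho_0=\min(\rho,\rho_1)$, a universal constant.
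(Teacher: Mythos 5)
Your preliminary observation that $J\subsetneq I^1$ is correct, and Case A (when $I^1$ is $\rho$-well inside $I$) is complete. The gap is in Case B, and it is not merely the "final comparison" you defer at the end; the design of the argument cannot close, because the only information you extract about $J$ is too weak.

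In Case B you use only that $f^{s_1}(J)\subset I$, hence $f^{s_1}(J)$ is $\rho$-well inside $(1+2\rho)I$. Pulling this back produces a symmetric neighborhood $V\ni c$ with $f^{s_1}(V)\supset(1+2\rho)I$ and $J$ $\rho_1$-well inside $V$, but $V\supsetneq I^1$ and need not lie in $I$. In Case B, $I^1$ already fills up most of $I$: both components of $I\setminus I^1$ have length less than $\rho|I^1|$, and a distortion estimate only gives $|V\setminus I^1|\le C|I^1|$ for some universal $C>0$, so the excess of $V$ over $I^1$ is of the same order as $I\setminus I^1$ with no control on which is larger. Thus ``$J$ well inside $V$'' does not transfer to ``$J$ well inside $I$.'' More fundamentally, the hypothesis $f^{s_1}(J)\subset I$ does not distinguish $J$ from intervals nearly as large as $I^1$, so no bookkeeping on the constants $\rho$, $K(\rho)$, $\ell$ can rescue the step.

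The paper closes Case B by going deeper in the principal nest. Let $m\ge 1$ be the level with $f^{s_1}(c)\in I^{m-1}\setminus I^m$. Iterating your preliminary observation, one sees that $J\subset I^m$ and, crucially, $f^{s_1}(J)\subset I^{m-1}\setminus I^m$. The dichotomy is then taken at level $m-1$: if $I^m$ is $\rho$-well inside $I^{m-1}$, one concludes as in your Case A. Otherwise $R_{I^{m-1}}\colon I^m\to I^{m-1}$ is a high return by Theorem~\ref{thm:realbounds}, so $f^{s_1}(I^m)\ni c$ occupies a definite fraction of $I^{m-1}$, while $f^{s_1}(J)$ is squeezed into a component of $I^{m-1}\setminus I^m$, which is small exactly because we are in the not-well-inside case. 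Hence $|f^{s_1}(J)|/|f^{s_1}(I^m)|$ is bounded away from one, and transporting this through the bounded-distortion map $f^{s_1-1}|f(I^m)$ of Theorem~\ref{thm:realbounds}(iii) and the critical fold gives $|J|/|I^m|$ bounded away from one, so $J$ is uniformly well inside $I^m\subset I$. The key fact your proof does not exploit is the containment $f^{s_1}(J)\subset I^{m-1}\setminus I^m$, which forces $f^{s_1}(J)$ to be absolutely small in Case B rather than merely a subinterval of $I$.
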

\begin{proof}
Let $s$ be the return time of $c$ to $I$ and let $m\ge 1$ be such that $f^{s}(c)\in I^{m-1}\setminus I^m$. Note that $J\subset I^m$ and $f^s(J)\subset I^{m-1}\setminus I^m$.

Let $\rho>0$ be the constant appearing in Theorem~\ref{thm:realbounds}. If $I^m$ is $\rho$-well inside $I^{m-1}$, then $J$ is $\rho$-well inside $I$, and we are done. So assume that $I^m$ is not $\rho$-well inside $I^{m-1}$.
Then $R_{I^{m-1}}: I^m \to I^{m-1}$ is a high return, and $f^{s-1}|f(I^m)$ has uniformly bounded distortion. Since $f^s(I^m)$ is definitely larger than $f^s(J)$, it follows that $|f(J)|/|f(I^m)|$ is bounded away from one, hence $J$ is uniformly well inside $I^m\subset I$.
\end{proof}

\subsection{Central cascade}\label{sec:sizeofchildren}
By a {\it central cascade}, we mean a sequence of symmetric nice intervals
$$T\supset T^1\supset \cdots T^m, \quad (m\ge 1)$$
which contain $c$ such that
\begin{itemize}
\item $T^{i+1}$ is the central return domain of $T^i$, for each $0\le i<m$;
\item  the first return times of $c$ to $T,T^{1}, \cdots, T^{m-1}$ are all the same.
\end{itemize}
So $R_{T^i}$ are central for all $0\le i \le m-2$.
A central cascade is called {\em maximal} if $R_{T}(c)\not\in T^{m}$.


\begin{prop}\label{prop:sizeofchildren}
Let $T=T^0\ni c$ be a small symmetric nice interval and let $T^0\supset T^1\supset T^2\supset\cdots\supset T^m$ be a maximal central cascade.
Assume that $T^1$ is $\tau$-well inside $T^0$. Let $i\in \{1,2,\ldots, m\}$ and
let $J_1\supsetneq J_2\supsetneq\cdots$ be all the children of $T^i$.
Then there exist constants $C>0$ and $0<\lambda< \lambda_0<1$, depending only on $\tau$, such that
\begin{enumerate}
\item[1.] for each $k=1,2,\ldots$, we have $|J_k|\le \lambda^{k-1}|T^i|;$
\item[2.] for each $k\ge 2$, $J_k$ is $C\lambda_0^{-k}$-nice.
\end{enumerate}
\end{prop}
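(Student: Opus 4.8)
The plan is to derive the size estimate (1) first, and then to obtain the niceness estimate (2) from it by a short geometric argument.

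\emph{Reduction of (2) to (1).} Since $f$ is geometrically symmetric near $c$, every sufficiently small symmetric nice interval has $c$ as its midpoint; in particular this holds for $T^i$ and for each child $J_k$. Granting (1), each component of $T^i\setminus J_k$ then has length at least $\tfrac12(|T^i|-|J_k|)\ge\tfrac12(\lambda^{-(k-1)}-1)|J_k|$, so $J_k$ is $\tau_k$-well inside $T^i$ with $\tau_k\ge\tfrac12(\lambda^{-(k-1)}-1)$, which grows at least geometrically in $k$. For $k$ beyond a threshold $k_0=k_0(\tau)$ the quantitative part of Lemma~\ref{lem:wellinsidenice} applies and shows that $J_k$ is $\tau_k'$-nice with $\tau_k'$ growing at least geometrically; choosing any $\lambda_0\in(\lambda,1)$ this yields $\tau_k'\ge C\lambda_0^{-k}$ for a suitable $C>0$. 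For the finitely many $2\le k\le k_0$, Lemma~\ref{lem: boundsofchild} gives that $J_k$ is $\rho_0$-well inside $T^i$, hence $\rho_0'$-nice for a universal $\rho_0'$ by Lemma~\ref{lem:wellinsidenice}; after shrinking $C$ (only finitely many $k$ are affected) we conclude that $J_k$ is $C\lambda_0^{-k}$-nice for all $k\ge2$.

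\emph{Proof of (1).} This is the substantial part. First I would unwind the combinatorial structure of the children of $T^i$ inside the cascade: writing $s$ for the common return time of the cascade and $g=f^s$, and $\sigma_1<\sigma_2<\cdots$ for the transition times of $J_1\supsetneq J_2\supsetneq\cdots$ (with Lemma~\ref{lem:childtime} locating them inside the first return domain of $T^i$), one checks from the definitions that each $J_k$ is a critical pull back of $T^i$ by $f^{\sigma_k}$ for which $f^{\sigma_k}|_{J_k}$ consists of one fold at $c$ followed by the diffeomorphism $f^{\sigma_k-1}|_{f(J_k)}$; that $f^{\sigma_k}(J_k)$ is the sub-interval of $T^i$ having $f^{\sigma_k}(c)$ as one endpoint and a point of $\partial T^i$ as the other; and that passing from $J_k$ to $J_{k+1}$ amounts to running the orbit once more through the cascade, so that $f^{\sigma_k}(J_{k+1})$ is the sub-interval of $f^{\sigma_k}(J_k)$ anchored at $f^{\sigma_k}(c)$ cut out by the next return of this point to $T^i$. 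The key geometric claim is then $|f^{\sigma_k}(J_{k+1})|\le\theta\,|f^{\sigma_k}(J_k)|$ with $\theta=\theta(\tau)<1$: a definite proportion of $f^{\sigma_k}(J_k)$ must leave $T^i$ before returning, which one extracts from the Real Bounds (Theorem~\ref{thm:realbounds}) together with the hypothesis that $T^1$ is $\tau$-well inside $T^0$. Pulling this inequality back through $f^{\sigma_k}$ — using Koebe (Theorem~\ref{thm:koebe}) and Theorem~\ref{thm:realbounds}(iii) to control the diffeomorphic part and its distortion across the cascade, and non-flatness ($|x|^{\ell}$ near $c$) for the single passage through $c$ — gives $|J_{k+1}|\le\lambda\,|J_k|$ with $\lambda=\lambda(\tau)<1$; since $|J_1|\le|T^i|$, induction yields $|J_k|\le\lambda^{k-1}|T^i|$.

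\emph{Main obstacle.} The delicate point is precisely the central cascade in this last pull-back step: the cascade may fail to be geometrically contracting (with $T^{j+1}$ almost filling $T^j$), so Koebe alone gives no distortion control, and one must instead chain the bounded-distortion diffeomorphic extensions of Theorem~\ref{thm:realbounds}(iii) along the whole cascade and feed in the $\tau$-well-inside hypothesis on $T^1$ to produce uniform constants $\theta$ and $\lambda$. Two auxiliary case distinctions will also have to be handled: whether $(T^i)^1$ is itself a child (the exceptional interval of Lemma~\ref{lem: boundsofchild}), and whether the non-central return at the bottom $T^m$ of the cascade is high or low — this last governs the geometry of the return domains of $T^i$ that enters the constant $\theta$.
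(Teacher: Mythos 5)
Your overall architecture coincides with the paper's: statement (2) is deduced from statement (1) via Lemma~\ref{lem:wellinsidenice} (the paper does this in one line; your symmetry argument explaining why a short symmetric child is automatically well inside $T^i$ is a correct way to fill that in), and the substance is the geometric decay of $|J_k|$. For $1\le i<m$, and also for $i=m$ when $T^m$ is $\rho$-well inside $T^{m-1}$, the decay is indeed routine and follows the lines you indicate (the paper uses Lemmas~\ref{lem:noncentralnice}, \ref{lem:largebounds} and \ref{lem:wellinsidenice}).

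The gap is in your ``key geometric claim'' for the remaining case $(1+2\rho)T^m\supset T^{m-1}$, which is exactly where all the work lies. Two problems. First, your combinatorial description is off: consecutive children of $T^i$ need not be separated by a single return of $f^{\sigma_k}(c)$ to $T^i$ --- the critical orbit may return to $T^i$ several times, landing each time in the central return domain, before the next unimodal pull back is created, so ``running the orbit once more through the cascade'' does not describe $J_{k+1}$ in general. Second, and more seriously, the asserted per-step contraction (``a definite proportion of $f^{\sigma_k}(J_k)$ must leave $T^i$ before returning'') is not a consequence of Theorem~\ref{thm:realbounds} together with the hypothesis on $T^1$. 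When $T^m$ almost fills $T^{m-1}$, the return $f^s:T^m\to T^{m-1}$ is a near-saddle-node: the set $Q=f^{-s}(T^m)\cap T^m$ has two components that are \emph{not} well inside $T^m$ (one of the gaps is comparable to $|T^{m-1}\setminus T^m|$, which may be arbitrarily small), $f^s|Q_-$ has a fixed point $b$, and the critical orbit can linger in $Q$ for arbitrarily many iterates of $f^s$ between consecutive children; neither Koebe nor the Real Bounds yields a uniform factor $\theta<1$ per passage in this situation. The paper replaces your one-step claim by the machinery of Lemmas~\ref{eqn:bddisfstm}, \ref{lem:setQ} and \ref{lem:fromV}: bounded distortion of $f^{s-1}|f(T^m)$ extracted from the Koebe space of $T^{m-1}$ inside $(1+2\rho)T^{m-1}$; geometric decay of the nested preimages $Q_{\pm,j}$, which rests on the diffeomorphic extension $f^s(Z)\supset Z\cup(1+2\sigma)Q_+$ (itself using the absence of attracting fixed points of $f^s$); a separate analysis around the fixed-point interval $B=(b,\hat b)$ via the sets $\mathcal{B}_n$; and Lemma~\ref{lem:fromV} for orbits starting in the central component $V$. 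The concluding argument then groups the children into blocks according to the escape times of the critical orbit from $Q$ and proves geometric decay blockwise rather than child-by-child. Your proposal correctly names this obstacle but supplies none of the mechanism, so the proof of (1) is incomplete precisely at its crux.
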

To prove this proposition, let us first introduce some notation. For $y\in D(T^0)$,
let $r(y)$ denote the first entry time of $y$ into $T^0$, and let $s=r(c)$, so $R_{T^0}|T^1= f^s|T^1$.
Let
$$E(T)=\bigcup_{i=0}^{m-1}\{x\in T^i\setminus T^{i+1}: R_T^{i}(x) \in \dom (T) \},$$
and for each $x\in E(T)\cap (T^i\setminus T^{i+1})$, let
$$t(x)=is + r(f^{is}(x)).$$
Moreover, let $F=F_{T}: E(T)\to T$ be defined as
$$F(x)= f^{t(x)}(x).$$
Clearly, $t(x)$ is constant on each component $J$ of $E(T)$.

We shall also need the following notations:
\begin{itemize}
\item $Q=f^{-s}(T^m)\cap T^m$;
\item $V$ is the component of $f^{-s}(E(T))$ which contains $c$;
\item $X=f^{-s}(E(T))\cap (T^m\setminus (Q\cup V))$.
\end{itemize}

\begin{lem}\label{lem:Ffull}
\begin{enumerate}
\item[(i)] The map $F$ maps each component $J$ of $E(T)$ diffeomorphically onto $T$.
\item[(ii)] For each $x\in D(T^m)\setminus (Q\cup V)$, if $k$ is the entry time of $x$ to $T^m$, then $f^k$ maps a neighborhood $W(x)$ of $\mathcal{L}_x(T^m)$ diffeomorphically onto $T$. Moreover, if, in addition, $x\in X$ then $W(x)\subset X$.
\end{enumerate}
\end{lem}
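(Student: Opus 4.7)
For part (i), I decompose $F$ on each component $J \subset T^i \setminus T^{i+1}$ of $E(T)$ as $F|J = R_T \circ f^{is}|J$. The annular piece $T^j \setminus T^{j+1}$ avoids $c$ (since $c \in T^{j+1}$), so $f^s$ is free of critical points there and is monotone on each of its two components, sending it diffeomorphically onto a component of $T^{j-1} \setminus T^j$; this uses that throughout the central cascade the first-return time from any point of $T^j$ to $T^{j-1}$ equals $s$ and that $f^s(\partial T^j) \subset \partial T^{j-1}$ by niceness. Iterating $i$ times, $f^{is}|J$ is a diffeomorphism; by the maximality of $J$ as a component of $E(T)$, its image is a non-central return domain $U \subset T^0 \setminus T^1$ of $T$. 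Finally, $R_T|U \colon U \to T$ is a diffeomorphism because the two endpoints of $U$ are mapped by $f^{r_U}$ onto the two distinct endpoints of $T$.

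For part (ii), my first reduction is $T^m \setminus (Q \cup V) \subset X$: if $x$ lies in the left-hand set, then $f^s(x) \in T^{m-1} \setminus T^m$ (as $x \notin Q$), and because $x \in D(T^m)$ its forward orbit keeps returning to $T$, making $R_T^{m-1}(f^s(x))$ defined in $D(T)$ and so $f^s(x) \in E(T)$, i.e.\ $x \in X$. For $x \in X$, let $J'$ be the component of $E(T)$ containing $f^s(x)$ and $W'$ the component of $f^{-s}(E(T))$ containing $x$. Since $x \notin V$ and $V$ is the unique component of $f^{-s}(E(T))$ that contains $c$, we have $c \notin W'$, so $f^s|W' \colon W' \to J'$ is a diffeomorphism. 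Composing with $F|J' \colon J' \to T$ from part (i) gives a diffeomorphism $W' \to T$ at time $s + t(f^s(x))$. If $F(f^s(x)) \in T^m$, I set $W(x) = W'$; otherwise $F(f^s(x)) \in T \setminus T^m$ has strictly smaller first-entry time to $T^m$, so by induction on $k$ the conclusion of (ii) applies at $F(f^s(x))$, and pulling the resulting neighborhood back through the $f^{s+t(f^s(x))}|W'$-diffeomorphism yields $W(x)$. The remaining cases $x \in (D(T^m) \cap T) \setminus T^m$ and $x \in D(T^m) \setminus T$ reduce to the $T^m$-case by precomposing with the first-return or first-entry map to $T$, which is diffeomorphic by the standard niceness argument.

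For the inclusion $W(x) \subset X$ when $x \in X$, three things need verification: (a) $W(x) \subset T^m$, since $f^s(\partial T^m) \subset \partial T^{m-1}$ is disjoint from the open set $E(T) \subset T^{m-1}$, preventing $\partial T^m \cap f^{-s}(E(T)) \ne \emptyset$; (b) $W(x) \cap Q = \emptyset$, as $f^s(W(x)) \subset J' \subset T^{m-1} \setminus T^m$ avoids $T^m$; (c) $W(x) \cap V = \emptyset$, since $W(x)$ and $V$ are distinct, hence disjoint, components of $f^{-s}(E(T))$.

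The main obstacle I anticipate is the iterative bookkeeping in (ii) when $F(f^s(x))$ falls outside $T^m$: the induction on $k$ requires interleaving $f^s$-pullbacks with $F$-pullbacks while verifying at every stage that no intermediate pullback contains $c$, a fact that ultimately rests on $V$ being the unique critical component of $f^{-s}(E(T))$.
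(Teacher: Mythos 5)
Your proposal is correct and follows essentially the same route as the paper: part (i) via the factorization $F|J = R_T\circ f^{is}|J$, and part (ii) by pulling back through $f^s$ into $E(T)$ and then iterating $F$, with the case split $x\in T\setminus T^m$, $x\notin T$, $x\in X$. One small imprecision: $E(T)$ is \emph{not} contained in $T^{m-1}$ (it includes annuli in $T\setminus T^1$, etc.); what your step (a) actually needs is only that $\partial T^{m-1}\cap E(T)=\emptyset$, which holds because $\partial T^{m-1}\subset T^{m-2}\setminus T^{m-1}$ and $f^{(m-2)s}(\partial T^{m-1})=\partial T^1\notin D(T)$.
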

\begin{proof}
We first prove the statement (i). If $J$ is a component of $E(T)$ in $T^0\setminus T^1$, then $J$ is a non-central return domain and $F=R_T$, so $F$ maps $J$ diffeomorphically onto $T$.
Now let $J$ be a component of $E(T)$ in $T^i\setminus T^{i+1}$ for some $1\le i<m$. Since $f^{is}$ maps a component of $T^i\setminus T^{i+1}$ diffeomorphically onto a component of $T^0\setminus T^1$, $f^{is}: J\to J':=f^{is}(J)$ is a diffeomorphism and $J'$ is a component of $E(T)$ in $T^0\setminus T^1$. Since $t|J= t|J'+is$, $F|J= R_T|J'\circ f^{is}|J$ maps $J$ diffeomorphically onto $T$.

Let us prove the statement (ii). Let us distinguish a few cases.

{\bf Case 1.} $x\in T\setminus T^m$. In this case, $f^k|\mathcal{L}_x(T^m)$ can be written as an iterate of $F$, so the statement follows from (i). Note that $W(x)\subset E(T)$.

{\bf Case 2.} $x\not\in T$. Let $k'\le k$ be the first entry time of $x$ to $T$. Then $f^{k'}:\mathcal{L}_x(T)\to T$ is a diffeomorphism. So the statement holds if $k'=k$. If $k'<k$, then $f^{k'}(\mathcal{L}_x(T^m))=\mathcal{L}_{f^{k'}(x)}(T^m)$ and we are reduced to Case 1.

{\bf Case 3.} $x\in D(T^m)\cap X$. Then $k>s$ and $x'=f^s(x)\in D(T^m)\cap (T^{m-1}\setminus T^m)$.
Let $W_0(x)$ and $W_0(x')$ denote the component of $X$ which contains $x$ and $x'$ respectively. By definition of $X$, $f^s:W_0(x)\to W_0(x')$ is a diffeomorphism.
So we are reduced to Case 1 again.
\end{proof}

A nice interval $I$ is called {\em $\tau$-non-central nice} if all its return domains, except possibly the one containing $c$, are $\tau$-well inside $I$.
The following is an immediate consequence of Lemma~\ref{lem:Ffull}.
\begin{lem} \label{lem:noncentralnice}
Assume that $T^1$ is $\tau$-well inside $T^0$. Then for each $1\le i<m$, $T^i$ is a $\tau'$-non-central-nice interval, where $\tau'$ depends only on $\tau$.
\end{lem}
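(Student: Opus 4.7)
I would derive Lemma~\ref{lem:noncentralnice} as a Koebe-type consequence of Lemma~\ref{lem:Ffull}(i). Fix $1\le i<m$ and let $J$ be a non-central return domain of $T^i$ with return time $k_J$, so $J\subset T^i\setminus T^{i+1}$ and $f^{k_J}|J$ is a diffeomorphism onto $T^i$. The goal is to find $\tau'=\tau'(\tau)$ with $J$ being $\tau'$-well inside $T^i$.

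First I would upgrade the hypothesis. All intervals in the central cascade are symmetric about $c$, so the containment $T^i\subset T^1$ together with $|T^0|\ge(1+2\tau)|T^1|$ gives $|T^0|\ge(1+2\tau)|T^i|$. Thus $T^i$ is $\tau$-well inside $T^0=T$.

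Next, I would apply Lemma~\ref{lem:Ffull}(i) to obtain a diffeomorphism $F|W:W\to T$ for a component $W$ of $E(T)$ containing $J$. To see $J\subset E(T)$, I would use the cascade-exit structure: orbits of points in $J$ traverse the levels $T^{i-1}\setminus T^i,\, T^{i-2}\setminus T^{i-1},\,\ldots,\,T^0\setminus T^1$ under successive applications of $f^s$, and then the persistent recurrence of $f$ on $\omega(c)$ ensures $R_T^i(x)\in\dom(T)$ for every $x\in J$. Lemma~\ref{lem:Ffull}(i) then supplies the diffeomorphism $F|W:W\to T$, and I would write $F|W=f^t$ for the corresponding constant exponent.

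Applying Theorem~\ref{thm:koebe} to $f^t|W$---with Koebe space provided by $T^i$ being $\tau$-well inside $T=f^t(W)$---shows that the preimage $J^*:=(f^t|W)^{-1}(T^i)$ is $\tau_1$-well inside $W$, where $\tau_1=0.9\tau^2/(1+2\tau)$. Since $W\subset T^i\setminus T^{i+1}\subset T^i$, the same $J^*$ is $\tau_1$-well inside $T^i$. The remaining step is to identify $J=J^*$, which I expect to be the main obstacle: the return time $k_J$ and the $F$-exponent $t$ are a priori distinct, and one must verify that they in fact match for return domains arising via the cascade-identification $f^{is}:T^i\setminus T^{i+1}\to T^0\setminus T^1$ (diffeomorphic on each monotone component, from the proof of Lemma~\ref{lem:Ffull}(i)). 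Under this identification, non-central return domains of $T^i$ correspond to non-central return domains of $T^0$---themselves components of $E(T)$ at the top level---forcing $k_J=t$ and hence $J=J^*$. With this identification in hand, $J$ is $\tau_1$-well inside $T^i$, and setting $\tau':=\tau_1$ completes the proof.
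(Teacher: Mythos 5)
Your framework is the same as the paper's (use $F$ from Lemma~\ref{lem:Ffull} and then Koebe), and the first steps are fine: $T^i\subset T^1$ is indeed $\tau$-well inside $T^0$, and every non-central return domain $J$ of $T^i$ lies in $E(T)$. But the step you flagged as the potential obstacle — identifying $J$ with $J^*=(f^t|W)^{-1}(T^i)$, i.e.\ showing $k_J=t$ — is genuinely false, and the argument you give to close it is incorrect.

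Here is the problem. For $x\in J\subset T^i\setminus T^{i+1}$, the time $t(x)=is+r(f^{is}(x))$ is the first time after $is$ that the orbit re-enters $T^0$, so $F(x)=f^t(x)\in T^0$; nothing forces $F(x)$ to land in $T^i$. If $F(x)\notin T^i$, then $k_J>t$ and in fact $R_{T^i}|J=F^n|J$ for some $n\ge 2$, in which case $J$ and $J^*$ are disjoint intervals, not equal ones. Your claimed bijection ``non-central return domains of $T^i\ \longleftrightarrow\ $ non-central return domains of $T^0$ via $f^{is}$'' is not correct: even in the favorable case $n=1$, $f^{is}(J)$ is only a \emph{proper} subinterval of the return domain of $T^0$ that contains it, since $f^{t-is}$ carries $f^{is}(J)$ onto $T^i\subsetneq T^0$ while a return domain of $T^0$ is carried onto all of $T^0$. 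So the claimed correspondence fails even before one worries about $n\ge 2$.

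The paper's proof avoids this by writing $R_{T^i}|J=F^n|J$ for some $n\ge 1$ (not $n=1$), and then using Lemma~\ref{lem:Ffull}(i) repeatedly to produce the component $\widehat{J}\supset J$ of the domain of $F^n$ inside $T^i\setminus T^{i+1}$, on which $f^{k_J}=F^n$ is a diffeomorphism onto $T^0$. Koebe is then applied to this single diffeomorphism $f^{k_J}:\widehat{J}\to T^0$ with Koebe space $T^0\supset(1+2\tau)T^i$. The fix for your argument is thus to replace the component $W$ of $E(T)$ by the (possibly much smaller) component of the domain of $F^n$ around $J$, and not to try to equate $k_J$ with a single $F$-time.
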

\begin{proof} Note that for each return domain $U$ of $T^i$, $U\not= T^{i+1}$, the first return map $R_{T^i}|U$ can be written in the form $F^n|U$ for some $n\ge 1$. By Lemma~\ref{lem:Ffull}, it follows that
$f^k: U\to T^i$ extends to a diffeomorphism $f^k: \hU\to T^0$ and $\hU\subset T^i\setminus T^{i+1}$, where $k\ge 1$ is the first return time of $J$ into $T^i$. Since $T^i\subset T^1$ is $\tau$-well inside $T$, by the Koebe principle, $U$ is well inside $\hU$, hence well-inside $T^i$.
\end{proof}

\begin{lem}\label{lem:largebounds}
For any $\tau>0$, there exists $\tau'>0$ such that if $T\ni c$ is a small $\tau$-non-central-nice interval and $J_2\subset J_1$ are children of $T$, then $J_2$ is $\tau'$-well inside $J_1$.
\end{lem}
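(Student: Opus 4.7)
The plan is to transfer the desired well-insideness from an estimate inside $T$ back to $J_2 \subset J_1$ via the diffeomorphism coming from the unimodal chain of $J_1$ and non-flatness at $c$. Write $s_1 < s_2$ for the transition times, and let $J_i = T_0^{(i)}, T_1^{(i)}, \ldots, T_{s_i}^{(i)} = T$ denote the pullback chains of $J_i$. Because each chain contains $c$ only at step $0$, the map $f^{s_1-1}|T_1^{(1)}$ is a diffeomorphism onto $T$; and since $J_2 \subset J_1$, the nice-nesting property of pullbacks of the nice interval $T$ gives $T_k^{(2)} \subset T_k^{(1)}$ for every $1 \le k \le s_1$, in particular $T_{s_1}^{(2)} \subset T$. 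By symmetry of the $J_i$ around $c$, the critical value $f(c)$ is an interior point of $T_1^{(i)}$, and $f(J_i)\subset T_1^{(i)}$ is the one-sided subinterval between $f(c)$ and the endpoint $f(\partial J_i)\in\partial T_1^{(i)}$.

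Suppose that one can show $T_{s_1}^{(2)}$ is $\tau''$-well inside $T$ for some $\tau''=\tau''(\tau)>0$. The Koebe principle (Theorem~\ref{thm:koebe}) applied to the diffeomorphism $f^{s_1-1}: T_1^{(1)}\to T$ then makes $T_1^{(2)}$ a $\tau'''$-well-inside subinterval of $T_1^{(1)}$, with $\tau'''$ depending only on $\tau''$. Reading this on the side of $T_1^{(i)}$ containing $f(J_i)$ gives $|f(J_1)|-|f(J_2)|\ge\tau'''|T_1^{(2)}|\ge \tau'''|f(J_2)|$, hence $|f(J_1)|/|f(J_2)|\ge 1+\tau'''$. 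Combining this with non-flatness at $c$ (and the smallness of $T$) converts the one-sided bound into an analogous lower bound for $|J_1|/|J_2|$, which together with the symmetry of $J_1, J_2$ around $c$ is exactly the desired $\tau'$-well-insideness of $J_2$ in $J_1$ for a positive $\tau'=\tau'(\tau)$.

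It thus remains to show $T_{s_1}^{(2)}$ is $\tau''$-well inside $T$. Let $V_0$ be the return domain of $T$ containing $f^{s_1}(c)$; by nice-nesting, $T_{s_1}^{(2)} \subset V_0$. In the easy case, $V_0$ is \emph{non}-central, and the $\tau$-non-central-nice hypothesis on $T$ directly gives $V_0$, and hence $T_{s_1}^{(2)}$, as $\tau$-well inside $T$, so one may take $\tau''=\tau$. The hard case is $V_0=T^1$, in which $T^1$ itself need not be well inside $T$. My strategy here is to descend in the maximal central cascade $T\supset T^1\supset\cdots\supset T^m$: letting $j_0\ge 1$ be the largest integer with $f^{s_1}(c)\in T^{j_0}$, the point $f^{s_1}(c)$ lies in a non-central return domain of $T^{j_0}$ (it belongs to $T^{j_0}\setminus T^{j_0+1}$), and by nesting $T_{s_1}^{(2)}$ is contained in that domain. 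The main technical difficulty is then to push this non-central well-insideness at level $j_0$ back up through the $2$-to-$1$ folds $f^{r_1}:T^{j+1}\to T^j$ (with $r_1$ the first return time of $c$ to $T$), iterating the Koebe principle on the monotone parts together with non-flatness at $c$, while arguing that the $\tau$-non-central-nice hypothesis on $T$ itself is enough to feed the base estimate. This descent through the central cascade is where I expect the proof to require its most delicate bookkeeping.
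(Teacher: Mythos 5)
Your overall structure—find a non-central return domain of $T$ to which the hypothesis applies, push it along the diffeomorphic part of the unimodal chain by Koebe, then through the fold at $c$ by non-flatness—is the right one, and your "easy case" is essentially the paper's argument. But you split into cases according to where $f^{s_1}(c)$ lands, and your "hard case" $V_0=T^1$ is a genuine gap: you propose to descend the central cascade to the level $j_0$ where $f^{s_1}(c)$ first exits, but the base estimate you need there—that the non-central return domain of $T^{j_0}$ containing $f^{s_1}(c)$ is well inside $T^{j_0}$—is \emph{not} a consequence of $T$ being $\tau$-non-central-nice. The lemma that would supply it (Lemma~\ref{lem:noncentralnice}) has the hypothesis that $T^1$ is well inside $T$, which is exactly what may fail in the case you are treating. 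So the cascade descent never gets started, and it is not just "delicate bookkeeping" that is missing.

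The paper avoids the case split entirely by a single trick: instead of looking at $f^{s_1}(c)$, it takes $s$ to be the \emph{largest} integer in $[s_1,s_2)$ with $x:=f^s(c)\in T$. Maximality forces $s_2-s$ to be the first return time of $x$ to $T$, so $\mathcal{L}_x(T)$ coincides with the chain element $T_s^{(2)}$ of $J_2$, which does not contain $c$; hence $\mathcal{L}_x(T)$ is automatically non-central and the $\tau$-non-central-nice hypothesis applies directly, with no cascade in sight. The well-insideness is then pulled back from step $s$ to step $s_1$ (via Theorem~\ref{thm:SV}), from step $s_1$ to step $1$ by Koebe, and finally through $c$ by non-flatness. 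If you replace "let $s_1$ be the transition time and look at $f^{s_1}(c)$" with "look at the last time in $[s_1,s_2)$ that the critical orbit visits $T$", your easy case becomes the whole proof and the hard case disappears.
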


\begin{proof}
 Let $s_1<s_2$ be the transition time of $J_1,J_2$ to $T$ respectively. Let $s$ be the maximal integer such that $s_1\le s<s_2$ and $x:=f^{s}(c)\in T$. Then $s_2-s$ is the return time of $x$ into $T$ and $\mathcal{L}_x(T)$ does not contains $c$. By assumption, $\mathcal{L}_x(T)$ is $\tau$-well inside $T$, so by Theorem~\ref{thm:SV}, the component of $f^{-(s-s_1)}(\mathcal{L}_x(T))$ containing $f^{s_1}(c)$ is $\tau'$-well inside $T$, where $\tau'>0$ is a constant. By Theorem~\ref{thm:koebe} and non-flatness of the critical point, $J_2$ is well inside $J_1$.
\end{proof}

\newcommand{\hJ}{\widehat{J}}

These lemmas imply Proposition~\ref{prop:sizeofchildren} immediately unless
\begin{equation}\label{eqn:notwellinside}
(1+2\rho)T^m\supset T^{m-1}.
\end{equation}
To deal with the case when (\ref{eqn:notwellinside}) holds, we need the following three lemmas.

Assume (\ref{eqn:notwellinside}). Then by Theorem~\ref{thm:realbounds}, $R_{T^{m-1}}: T^m\to T^{m-1}$ is high, so $Q$ consists of two intervals, each of which is mapped diffeomorphically onto $T^m$ by $f^s$. Let $Q_+, Q_-$ denote the components of $Q$ such that $f^s|Q_+$ is monotone increasing.
Let $b$ be the unique fixed point of $f^s|Q_-$, let $\hat{b}=(f^s|Q_+)^{-1}(b)$ and let $B=(b, \hat{b})$.

\newcommand{\hZ}{\widehat{Z}}
\begin{lem}\label{eqn:bddisfstm}
There exist universal constants $K>1$ and $\sigma>0$  such that the following hold:
\begin{enumerate}
\item [(i)] For any $x, y\in T^m$, $|Df^{s-1}(f(x))|\le K |Df^{s-1} (f(y))|$;
\item [(ii)] $|(f^s)'(x)|\le K$ holds for all $x\in T^m$;
\item [(iii)] for any measurable $A\subset T^m$, $\frac{|A|}{|T^m|}\le K \left(\frac{|f^s(A)|}{|T^m|}\right)^{1/\ell}$;
\item [(iv)] $f^s$ maps a neighborhood $Z$ of $Q_+$ diffeomorphically onto its image and $\hZ:=f^s(Z)\supset Z\cup T^m\cup (1+2\sigma) Q_+$.
\item[(v)]
$\sigma |T^m| \le |B|\le (1-\sigma)|T^m|.$
\end{enumerate}
\end{lem}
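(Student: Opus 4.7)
The plan is to derive all five items from Theorem~\ref{thm:realbounds}(iii): under the standing assumption $(1+2\rho)T^m\supset T^{m-1}$, that theorem supplies a neighborhood $W\supset f(T^m)$ on which $f^{s-1}$ is diffeomorphic with distortion at most a universal $K=K(\rho)$, and whose image $f^{s-1}(W)$ contains $(1+2\rho)T^{m-1}$; here $s$ is the common return time of the cascade. Item (i) is this statement verbatim. For (ii), decompose $(f^s)'(x)=(f^{s-1})'(f(x))\cdot f'(x)$: by (i) and non-flatness of $c$, the first factor is $\lesssim|T^{m-1}|/|f(T^m)|\asymp|T^{m-1}|/|T^m|^\ell$, while $|f'(x)|\lesssim|T^m|^{\ell-1}$ since $|x-c|\le|T^m|$; multiplying and using $|T^{m-1}|/|T^m|\le 1+2\rho$ yields the universal bound. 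For (iii), non-flatness at $c$ gives $|f(A)|/|f(T^m)|\gtrsim(|A|/|T^m|)^\ell$ for every measurable $A\subset T^m$ (by the rearrangement inequality applied to the model $x\mapsto|x|^\ell$), and bounded distortion of $f^{s-1}$ on $f(T^m)$ transfers this to $|f^s(A)|/|T^{m-1}|\gtrsim(|A|/|T^m|)^\ell$, which rearranges to the claimed power inequality using $|T^{m-1}|\le(1+2\rho)|T^m|$.

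For (iv), set $\hZ=f^{s-1}(W)\supset(1+2\rho)T^{m-1}$ and take $Z$ to be the component of $f^{-1}(W)$ meeting $Q_+$; since $Q_+$ lies on one side of $c$, $f:Z\to W$ is a diffeomorphism, hence so is $f^s:Z\to\hZ$. The inclusion $\hZ\supset T^m$ is immediate from $\hZ\supset T^{m-1}\supset T^m$, and $\hZ\supset(1+2\sigma)Q_+$ follows for $\sigma$ of order $\rho/K$ because bounded distortion of $f^{s-1}$ forces $\hZ$ to extend at least $\rho|T^{m-1}|/K$ beyond $T^{m-1}$ on each side, while $Q_+\subset T^{m-1}$ has length $\le|T^{m-1}|$. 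The remaining inclusion $\hZ\supset Z$ is a direct positional check: combining the derivative bound (ii) with bounded distortion on $W$ shows that $Z$ extends beyond $T^m$ by at most a constant multiple of the extension of $\hZ$ beyond $T^{m-1}$, which produces $Z\subset\hZ$.

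The main obstacle is (v). Since $Q_-$ avoids $c$, $f^s|Q_-$ is an orientation-reversing diffeomorphism from $Q_-=(-q_2,-q_1)$ onto $T^m$ with $f^s(-q_2)=q_{T^m}>-q_2$ and $f^s(-q_1)=-q_{T^m}<-q_1$, so a unique interior fixed point $b\in Q_-$ exists and $\hat b=(f^s|Q_+)^{-1}(b)\in Q_+$. The difficulty is that $f^s|Q_\pm$ is \emph{not} uniformly bounded-distortion: as $f^s(c)\to-q_{T^m}$ we have $q_1\to 0$, so $Q_-$ abuts the critical point. To bypass this I will work with the non-flat symmetric model at $c$, writing $f^s(x)=f^s(c)+K_1|x-c|^\ell\,g(x)$ with $g$ between universal constants by (i); the geometric symmetry of $f$ near $c$ then forces $\hat b=-b$ and hence $|B|=2|b|$. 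Substituting $u=|b|/q_{T^m}$, $\gamma'=-f^s(c)/q_{T^m}$, $R=|T^{m-1}|/|T^m|$, the fixed-point condition reduces to the one-variable equation $u^\ell(R+\gamma')+u=\gamma'$, whose parameters lie in the universal compact region $\{(R,\gamma'):1\le\gamma'\le R\le 1+2\rho\}$. Compactness forces the continuous solution $u$ into a universal compact subinterval of $(0,1)$, yielding the two-sided bound $\sigma|T^m|\le|B|\le(1-\sigma)|T^m|$ with universal $\sigma>0$; the distortion factor $g$ only perturbs this equation within the same compact family, so the conclusion is preserved.
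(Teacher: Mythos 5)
Your treatment of (i), (ii), (iii) matches the paper's proof in both substance and ordering: Theorem~\ref{thm:realbounds}(iii) gives Koebe distortion for $f^{s-1}$ on $f(T^m)$, which yields (i) directly, and (ii), (iii) follow from (i) plus the non-flat local model at $c$.

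For (iv), the construction of $Z$ is essentially the paper's, but there are two problems. First, $f^{-1}(W)$ is a single symmetric interval around $c$, and $f$ is two-to-one on it, so ``the component of $f^{-1}(W)$ meeting $Q_+$'' needs to be the component of $f^{-1}(W)\setminus\{c\}$; then $f(Z)\subsetneq W$, so $f^s(Z)\neq f^{s-1}(W)$, and the identification $\hZ=f^{s-1}(W)$ is inconsistent with $\hZ=f^s(Z)$. Second and more seriously, the ``direct positional check'' for $\hZ\supset Z$ does not go through. The bounded-distortion estimate supplied by Theorem~\ref{thm:realbounds}(iii) lives on the \emph{inner} Koebe region $f(T^m)$, not on all of $W$; the interval $Z\setminus T^m$ maps under $f$ into the annular part $W\setminus f(T^m)$ which reaches the boundary of $W$, where no distortion bound is available, so one cannot bound $|Z\setminus T^m|$ by a multiple of $\rho|T^{m-1}|$. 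The paper handles this by a dynamical argument: if $Z\not\subset f^s(Z)$, then $f^s$ maps the closure of $Z\setminus T^m$ into itself, forcing a non-repelling periodic point and contradicting $f\in\mathcal{A}_*$. That step is not a ``positional check'' and cannot be replaced by one.

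For (v), you take a genuinely different route (explicit fixed-point equation in the non-flat model plus compactness of the parameter region), but the route has a gap at the upper bound. Writing the fixed-point relation as $\beta u^\ell(R+\gamma')+u=\gamma'$ with $u=|b|/h$, $\beta=g(b)/g(h)\in[1/K,K]$, $1\le\gamma'\le R\le 1+2\rho$, the solution $u$ is bounded away from $0$ on this region, but it is \emph{not} automatically bounded away from $1$: at $u=1$ the left-hand side is $\beta(R+\gamma')+1$, and the inequality $\beta(R+\gamma')+1>\gamma'$ requires $\beta\ge(\gamma'-1)/(R+\gamma')$, which can fail (e.g.\ $\beta=1/K$, $\gamma'=R=1+2\rho$ needs $1/K\ge\rho/(1+2\rho)$, a relation between independent universal constants that need not hold). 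So compactness of $[1/K,K]\times\{1\le\gamma'\le R\le1+2\rho\}$ alone does not confine $u$ to a compact subinterval of $(0,1)$; one must feed in additional dynamical information. The paper's proof of (v) sidesteps this entirely: from (ii) one has $|(f^s)'|\le K$ on $T^m$, and the orientation-reversal of $f^s$ on $Q_-$ at the fixed point $b$ gives $f^s(Q_-\cap\overline{B})\supset Q_-\setminus B$ and $f^s(Q_-\setminus B)\supset Q_-\cap B$. With $B=(-|b|,|b|)$ (by symmetry), these inclusions plus the derivative bound give $h-|b|\le K|b|$ and $h+|b|\le K(h-|b|)$ directly, hence $|B|/|T^m|\in[1/(K+1),\,(K-1)/(K+1)]$. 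This is both simpler and correct; your approach needs an extra ingredient to close the upper bound.
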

\begin{proof}
By Theorem~\ref{thm:realbounds}, $f^{s-1}$ maps a neighborhood $G_1$ of $f(T^m)$ diffeomorphically onto $G:=(1+2\rho) T^{m-1}$.
By the real Koebe principle, there exists $K>1$ such that (i) holds.
For $x\in T^m$, we have
$$|(f^{s})'(x)|=|f'(x)||(f^{s-1})'(f(x))|\le K\frac{|f^s(T^m)|}{|f(T^m)|}|f'(x)|.$$ Since
$|f^s(T^m)|\le |T^{m-1}|\le (1+2\rho) |T^m|$, by the non-flatness, it follows that the statement (ii) holds by redefining the constant $K$.
The statement (iii) follows from (i) in a similar way.
For (iv) and (v), assume for definiteness that $Q_+$ lies to the left of $c$. Let $G_0=f^{-1}(G_1)$ and let $Z=(u,c)$ be the left component of $G_0\setminus \{c\}$.  Then $f^s$ maps $Z$ diffeomorphically onto its image and $\hZ:=f^s(Z)\supset T^m$. Since $f^s(u)$ is the left endpoint of $G$, we have $f^s(Z)\supset (1+2\sigma) Q_+$, where $\sigma=\min (1, \rho)$. If $Z\not\subset f^s(Z)$, then $f^s$ would map $Z\setminus I^m$ into itself and hence $f^s$ would have an attracting fixed point, which is not possible. Thus $f^s(Z)\supset Z$. The statement (iv) is proved.
The statement (iv) follows from (ii), since $f^s(Q_-\cap \overline{B})\supset Q_-\setminus B$ and
$f^s(Q_-\setminus B)\supset Q_-\cap B$.
\end{proof}

\begin{lem}\label{lem:setQ}
Assume that $(1+2\rho) T^m\not\subset T^{m-1}$. Then there exists a universal constant $\theta\in (0,1)$ such that if $P$ is an interval such that $f^{js}(P)\subset Q$ for $j=0,1,\ldots, N-1$, then
$$|P|\le \theta^N |T^m|.$$
\end{lem}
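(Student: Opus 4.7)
The plan is to show that the branched map $f^s:Q\to T^m$ is universally expanding on each component $Q_{\pm}$, so that preimages of $T^m$ whose first $N$ iterates remain in $Q$ must be exponentially small in $N$.

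By Lemma~\ref{eqn:bddisfstm}(i), $f^s|Q_\pm:Q_\pm\to T^m$ is a diffeomorphism whose distortion is controlled by a universal constant $K$ (coming from the bounded distortion of $f^{s-1}$ on $f(T^m)$). Hence for any subinterval $P\subset Q_\pm$,
$$\frac{|P|}{|Q_\pm|}\le K\frac{|f^s(P)|}{|T^m|}.$$
Since $f^s|Q_\pm$ is monotone, $f^s(P)$ is itself an interval; hence whenever $f^s(P)\subset Q$ the image lies entirely in a single component $Q_{\omega_1}$, so the inequality can be iterated.

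Next, by the geometric symmetry of $f$ near $c$ together with the symmetry of $T^m$ about $c$, the components $Q_+$ and $Q_-$ are reflections of each other through $c$; being disjoint subsets of $T^m$ they satisfy $|Q_\pm|\le |T^m|/2$. Substituting into the distortion inequality gives $|P|\le (K/2)|f^s(P)|$, and iterating under the trapping hypothesis $f^{js}(P)\subset Q$ for $j=0,1,\ldots,N-1$ yields
$$|P|\le \left(\frac{K}{2}\right)^N |f^{Ns}(P)|\le \left(\frac{K}{2}\right)^N |T^m|.$$

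The main obstacle is that the constant $K$ of Lemma~\ref{eqn:bddisfstm}(i) is only guaranteed to be some universal number exceeding $1$, which a priori need not beat the symmetric factor $1/2$. To sharpen the estimate I will invoke the larger diffeomorphic extension $Z$ supplied by Lemma~\ref{eqn:bddisfstm}(iv), on which $f^s$ maps onto $\hZ\supset (1+2\sigma)Q_+$; applying the Koebe principle (Theorem~\ref{thm:koebe}) with Koebe space $(1+2\sigma)Q_+$ tightens the effective distortion of $f^s|Q_\pm$ to a constant depending only on the universal $\sigma$. In tandem, the two-sided bound $\sigma|T^m|\le |B|\le (1-\sigma)|T^m|$ from Lemma~\ref{eqn:bddisfstm}(v), combined with the structural observation that $f^s$ sends $B\cap Q$ into a subinterval of $T^m$ disjoint from $B$ (so a trapped orbit cannot lie in $B$ for two consecutive iterates, forcing at least every other iterate into the more strongly expanding region $Q\setminus B$, where $f^s$ expands by a ratio at least $(1+\sigma)/(1-\sigma)$), produces the required universal $\theta\in(0,1)$.
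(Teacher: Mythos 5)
Your structural observation that $f^s$ maps $B\cap Q$ into $T^m\setminus B$ (so a trapped orbit cannot visit $B$ twice in a row) is correct, and it is indeed an ingredient in the argument. However, the step where you upgrade the \emph{mean} expansion $|f^s(Q_\pm\setminus B)|/|Q_\pm\setminus B|\ge(1+\sigma)/(1-\sigma)$ to a \emph{pointwise} expansion bound on $Q\setminus B$ is not justified and, I believe, is false in general. The fixed point $b$ of $f^s|Q_-$ lies on $\partial(Q_-\setminus B)$, and under the standing hypothesis $(1+2\rho)T^m\not\subset T^{m-1}$ the return $f^s:T^m\to T^{m-1}$ is precisely the ``almost renormalization'' regime in which $b$ can be nearly parabolic; since $f\in\mathcal{A}_*$ only guarantees $|Df^s(b)|>1$ and not a universal lower bound, $\inf_{Q\setminus B}|Df^s|$ need not exceed $1$. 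The Koebe space $(1+2\sigma)Q_+\subset\hZ$ from Lemma~\ref{eqn:bddisfstm}(iv) gives a universal distortion constant $C$ for the inverse branch on $(e_1,b)$, so all you can conclude is $\inf|Df^s|\ge(1+\sigma)/\bigl(C(1-\sigma)\bigr)$, and nothing forces $C<(1+\sigma)/(1-\sigma)$. For the same reason your opening display $|P|/|Q_\pm|\le K|f^s(P)|/|T^m|$ overstates Lemma~\ref{eqn:bddisfstm}(i): that item controls the distortion of $f^{s-1}$ only, and the factor coming from $f$ itself on $Q_\pm$ (which may sit arbitrarily close to $c$) is not universally bounded --- this is exactly why the paper records the weaker H\"older-type estimate (iii) rather than a Lipschitz one.

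The paper's proof copes with this degeneracy by never invoking a pointwise expansion bound. It first notes $f^s(Q\setminus B)\cap(Q_-\setminus B)=\emptyset$, so an orbit trapped in $Q\setminus B$ must (after at most one step) stay in $Q_+$; it then introduces the preimage chains $Q_{\pm,j}$ along such orbits and proves (Claim 1) their exponential decay by iterating the Koebe estimate with $Q_+$ $\sigma$-well inside $\hZ$ --- crucially, the Koebe space is measured relative to $Q_+$ (which is at most half of $T^m$), not relative to $T^m$. It then treats passages through $B$ separately via the first-entry sets $\mathcal{B}_n$ (Claim 2), using Claim~1 together with part (iii) of Lemma~\ref{eqn:bddisfstm} to show $|\mathcal{L}_x(B)|$ decays in the entry time; finally it splits a general trapped interval at preimages of $b,\hat b$ into pieces where either Claim~1 or Claim~2 applies. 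Your sketch captures the combinatorial alternation but is missing the mechanism (the $Q_{\pm,j}$ chain and the $\mathcal{B}_n$ estimate) that converts it into a universal geometric rate; as written, the proof does not close.
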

\begin{proof}
Let
$$\mathcal{P}_n=\{x\in T^m: f^{is}(x)\in Q\mbox{ for } 0\le i<n\}$$
and
$$\mathcal{P}_n^*=\{x\in \mathcal{P}_n: f^{ns}(x)\in T^m\setminus \{b, \hat{b}\}\}.$$
Note that each component of $\mathcal{P}_n$ is the union of three intervals of $\mathcal{P}_n^*$, up to two points (corresponding to preimages of $b$ and $\hat{b}$). As each component of $\mathcal{P}_n$, $n\ge 1$, is at most of length $|T^m|/2$, it suffices to show there exist universal constants $C_*>0$ and $\theta_*\in (0,1)$ such that
for each component $P_n^*$ of $\mathcal{P}_n^*$ we have
\begin{equation}\label{eqn:pn*}
|P_n^*|\le C_* \theta_*^n |T^m|.
\end{equation}

Let $Q_{+,1}= Q_+$ and $Q_{-,1}=Q_-$, and for each $j>1$, let $Q_{+,j}= (f^s|Q_+)^{-1}(Q_{+,j-1})$ and $Q_{-,j}=(f^s|Q_-)^{-1}(Q_{+,j-1})$. Then $Q_{+,j}$ are symmetric to $Q_{-, j}$ with respect to $c$.

{\bf Claim 1. } There exists a universal constant $\theta_1\in (0,1)$ such that
\begin{equation*}
|Q_{+,j}|=|Q_{-,j}|\le \theta_1^j |T^m|.
\end{equation*}
Indeed, by (iv) of Lemma~\ref{eqn:bddisfstm}, for each $j\ge 1$, $f^{js}$ maps a neighborhood of  $Q_{+,j}$ diffeomorphically onto $\hZ$.  Since $f^{js}(Q_{+,j+1})=Q_+$, and $f^{js}(Q_{+,j})=T^m$,
it follows by the Koebe principle that $|Q_{+,j+1}|/|Q_{+,j}|$ is uniformly bounded away from $1$. The claim follows.

Let
$$\mathcal{B}_n=\{x\in T^m: f^{js}(x)\in Q\mbox{ for } 0\le j<n, f^{ns}(x)\in B\}\subset \mathcal{P}_n^*.$$
For each component $B_n$ of $\mathcal{B}_n$, $f^{ns}$ maps a neighborhood of $B_n$ diffeomorphically onto $T^m$. By (v) of Lemma~\ref{eqn:bddisfstm}, $B$ is uniformly well inside $T^m$. By the Koebe principle, there exists a universal constant $K_1>1$ such that
\begin{equation}\label{eqn:dist2b}
\sup_{x,y\in B_n}\frac{|(f^{ns})'(x)|}{|(f^{ns})'(y)|}\le K_1.
\end{equation}

{\bf Claim 2.} There exists a universal constant $\theta_2\in (0,1)$ such that for each component $B_n$ of $\mathcal{B}_n$, $n=0,1,\ldots$, we have
\begin{equation}\label{eqn:bn}
|B_n|\le \theta_2^n|T^m|.
\end{equation}

To prove this claim, let $\mathcal{B}=\bigcup_{n=1}^\infty \mathcal{B}_n\subset D(B)$. For each $x\in \mathcal{B}$, the first entry time of $x$ into $B$ is of the form $k(x) s$, where $k(x)\ge 1$ is an integer.
For $x\in \mathcal{B}\setminus B$, we have $f^{js}(x)\in Q_+$ for $1\le j<k(x)$, so $\mathcal{L}_x(B)\subset Q_{+, k(x)}\cup Q_{-, k(x)}$. Thus by Claim 1, we have
\begin{equation}\label{eqn:mathcallxB}
|\mathcal{L}_x(B)|\le \theta_1^{k(x)} |T^m|\mbox{ holds for all }x\in\mathcal{B}\setminus B.
\end{equation}
Let us now show that there exist a universal constant $\theta_3\in (0,1)$ such that
\begin{equation}\label{eqn:mathcallxB1}
|\mathcal{L}_x(B)|\le \theta_3^{k(x)} |B|\mbox{ holds for all }x\in\mathcal{B}\cap B.
\end{equation}
Indeed, $\mathcal{L}_x(B)$ lies in a component of $B\setminus\{c\}$, so
\begin{equation}\label{eqn:lxbhalf}
|\mathcal{L}_x(B)|\le |B|/2.
\end{equation}
In particular, if $k(x)=1$, then (\ref{eqn:mathcallxB1}) holds with $\theta_3=1/2$. If $k(x)>1$, then $f^s(x)\in \mathcal{B}\setminus B$ and $f^{s}(\mathcal{L}_x(B))=\mathcal{L}_{f^s(x)}(B)$. So by (\ref{eqn:mathcallxB}) and part (iii) and (v) of Lemma~\ref{eqn:bddisfstm}, we have
$$\frac{|\mathcal{L}_x(B)|}{|B|}\le \sigma_2^{-1} \frac{|\mathcal{L}_x(B)|}{|T^m|}\le \frac{K_2}{\sigma_2} \theta_1^{(k(x)-1)/\ell}.$$
Together with (\ref{eqn:lxbhalf}), it follows that
(\ref{eqn:mathcallxB1}) holds for a suitable choice of $\theta_3$.

Now let us prove (\ref{eqn:bn}). Take a component $B_n$ of $\mathcal{B}_n$, $n\ge 1$.  Let $1\le n_1< n_2<\cdots < n_k=n$ be all the positive integers such that $f^{n_is}(B_n)\subset B$ and let $B_{n_i}$ be the component of $\mathcal{B}_{n_i}$ which contains $B_n$.
Then $B_{n_1}\supset B_{n_2}\supset\cdots\supset B_{n_k}=B_n$.
For each $1\le i<k$,
$Y_i:=f^{n_{i}s}(B_{n_{i+1}})$ is a component of $\mathcal{B}$ with entry time $(n_{i+1}-n_i)s$.
By (\ref{eqn:dist2b}),
$$\frac{|B_{n_{i+1}}|}{|B_{n_i}|}\le \frac{K_1 |Y_i|}{|B|+(K_1-1)|Y_i|}.$$
Thus by (\ref{eqn:mathcallxB1}), there exists $\theta_4\in (0,1)$ such that
$|B_{n_{i+1}}|\le \theta_4^{n_{i+1}-n_i} |B_{n_i}|.$
So $|B_n|\le \theta_4^{n-n_1}|B_{n_1}|$.
Let $\theta_2=\max(\theta_1,\theta_3,\theta_4)$.
By (\ref{eqn:mathcallxB}) and (\ref{eqn:mathcallxB1}), $|B_{n_1}|\le \theta_2^{n_1}|T^m|$.
Thus (\ref{eqn:bn}) holds.

Now let us complete the proof. Let $P_n^*$ be a component of $\mathcal{P}_n^*$.
We may assume $P_n^*\not\in\mathcal{B}_n$ for otherwise, Claim 2 applies.
Write $P_i^*=f^{(n-i)s} (P_n^*)$. Let $n_0$ be maximal in $\{0,1,\ldots, n\}$ such that $P_{i}^*\cap B=\emptyset$ for all $0\le i\le n_0$.
Since $f^s(Q\setminus B) \cap (Q_-\setminus B)=\emptyset$, we have $f^{is}(P_{n_0}^*)\subset Q_+$ for all $1\le i<n_0$. So
$P_{n_0}^*\subset Q_{+, n_0}$ or $P_{n_0}^*\subset Q_{-, n_0}$.
By Claim 1,
we have $|P_{n_0}^*|\le \theta_1^{n_0} |T^m|$.
If $n_0=n$ then we are done again. Assume $n_0<n$. Then $P_{n_0+1}^*\subset B$.
By part (iii) and (v) of Lemma~\ref{eqn:bddisfstm},
$|P_{n_0+1}^*|\le C_1'\theta_1'^{n_0+1}|B|$ holds for some universal constants $C_1'>0$ and $\theta_1'\in (0,1)$.
Let $B_{n-n_0-1}$ be the component of $\mathcal{B}_{n-n_0-1}$ which contains $P_n^*$. By (\ref{eqn:dist2b}), we have
$$|P_n^*|\le K_1 |B_{n-n_0-1}| \frac{|P_{n_0+1}^*|}{|B|}.$$
By Claim 2, the inequality (\ref{eqn:pn*}) follows.
\end{proof}

\begin{lem}\label{lem:fromV}
Assume $(1+2\rho)T^m\not\subset T^{m-1}$. Let $y\in V$ and let $t$ be the first return time of $y$ to $T^m$. Assume that
$f^{js} (f^t(y))\in Q$ for all $j=0,1,\ldots, n-1$ and let $H$ be the component of $f^{-ns-t}(T^m)$ which contains $y$. Then
$$|H|\le \theta_0^n |T^m|,$$
where $\theta_0\in (0,1)$ is a constant depending on $\tau$.
\end{lem}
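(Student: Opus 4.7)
My plan is to combine Lemma~\ref{lem:setQ}, which controls the iterated dynamics through $Q$, with the non-flatness estimate Lemma~\ref{eqn:bddisfstm}(iii) and a Koebe distortion argument, thereby transferring an exponential bound on $P:=f^t(H)$ back to $H$ itself.

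First, since $y\in V\subset T^m$ and $t$ is the first return of $y$ to $T^m$, niceness of $T^m$ forces $H\subset T^m$ and $P\subset T^m$; the hypothesis $f^{js}(f^t(y))\in Q$ then propagates by niceness to $f^{js}(P)\subset Q$ for $0\le j<n$, so Lemma~\ref{lem:setQ} gives $|P|\le \theta^n|T^m|$. Next, I use that $H\subset V$ and that $f^s|_V$ is unimodal onto the component $E'(T)$ of $E(T)$ containing $f^s(y)$; consequently $f^s(H)\subset E'(T)$, and Lemma~\ref{eqn:bddisfstm}(iii) applied to $H\subset T^m$ yields
\[
\frac{|H|}{|T^m|}\le K\left(\frac{|f^s(H)|}{|T^m|}\right)^{1/\ell}.
\]
It therefore suffices to bound $|f^s(H)|$ by a constant multiple of $|P|$. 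For this, Lemma~\ref{lem:Ffull}(i) ensures $F|_{E'(T)}\colon E'(T)\to T$ is a diffeomorphism, and the hypothesis that $T^1$ is $\tau$-well inside $T^0=T$ implies $T^m\subset T^1$ is uniformly well inside $T$. Applying Theorem~\ref{thm:koebe} to $F|_{E'(T)}$ then gives bounded distortion on $F^{-1}(T^m)\cap E'(T)$ with constants depending only on $\tau$, hence $|f^s(H)|\le C(\tau)|P|$ once one knows $F(f^s(H))=P$. Combining yields $|H|\le KC(\tau)^{1/\ell}\theta^{n/\ell}|T^m|$, and choosing any $\theta_0\in(\theta^{1/\ell},1)$ absorbs the multiplicative constant for $n$ sufficiently large; the finitely many small-$n$ cases are then handled by the trivial bound $|H|\le|T^m|$ together with a further adjustment of $\theta_0$ closer to $1$.

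The main obstacle is the case when the orbit from $f^s(y)$ first re-enters $T$ via $F$ at some time $t_1<t$ with $F(f^s(y))\in T\setminus T^m$, so that $t>t_1$ and an additional pull back through the first-entry map of $T^m$ is required. Specifically, letting $W'\subset T\setminus T^m$ be the entry domain of $T^m$ containing $F(f^s(y))$, the map $f^{t-t_1}|_{W'}$ is diffeomorphic onto $T^m$ and, via the standard nice-interval formalism, extends diffeomorphically to a neighborhood mapping onto a definite scaled copy of $T$; since $T^m$ is well inside $T$, Koebe again yields bounded distortion with constants depending only on $\tau$. Composing this estimate with the previous one for $F|_{E'(T)}$ preserves the bound $|f^s(H)|\le C(\tau)|P|$, so the conclusion follows as before. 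The delicate part of the argument is producing these diffeomorphic extensions and tracking uniform distortion constants through the composition, as well as justifying $f^s(H)\subset E'(T)$ itself, which relies on every point of $f^s(H)$ eventually landing in $T^m\subset T$ under a controlled number of iterations of $R_T$.
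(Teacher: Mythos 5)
Your approach is genuinely different from the paper's, and at heart it may even be cleaner. The paper introduces $\delta$ with $|T^{m-1}|=(1+2\delta)|T^m|$ (note $\delta<\rho$ by hypothesis), shows $|V|\le\min(\eta_1,K_1\delta^{1/\ell})|T^m|$ via Lemma~\ref{eqn:bddisfstm}(i) and non-flatness, and then splits into two cases: when $\delta<\theta^{n/2}$ the bound on $|V|$ alone is already exponentially small, and when $\delta\ge\theta^{n/2}$ the interval $f^t(H)$ is $\theta^{-n/2}$-well inside $T^{m-1}$, so that a single Koebe estimate applied to the extension $f^{t-1}:W\to T^{m-1}$ from Lemma~\ref{lem:Ffull}(ii), followed by non-flatness, gives the result. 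You instead apply Koebe with $T$ as the reference interval (via $F|_{E'(T)}:E'(T)\to T$ and subsequent entry maps); since $T^m$ is $\tau$-well inside $T$ irrespective of $\delta$, your argument avoids the $\delta$-dichotomy altogether. The price is that you must compose two (or more) diffeomorphic extensions and also observe $E'(T)\subset T^{m-1}\setminus T^m$, so that $|F^{-1}(T^m)\cap E'(T)|\le 2\rho|T^m|$; this last fact, needed to turn bounded distortion into $|f^s(H)|\le C(\tau)|P|$, is implicit but not stated. The paper's route through $f^{t-1}:W\to T^{m-1}$ does all the composing in one shot through Lemma~\ref{lem:Ffull}(ii), which is why it is technically lighter.

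There is, however, a genuine gap in your closing argument. From $|H|\le KC(\tau)^{1/\ell}\theta^{n/\ell}|T^m|$ and $\theta_0\in(\theta^{1/\ell},1)$ you obtain $|H|\le\theta_0^n|T^m|$ only for $n\ge n_1$, where $n_1$ depends on the multiplicative constant. For $1\le n<n_1$ the ``trivial bound'' $|H|\le|T^m|$ does \emph{not} yield $|H|\le\theta_0^n|T^m|$, since $\theta_0^n<1$; pushing $\theta_0$ towards $1$ only reduces $n_1$ to a fixed positive integer, it does not make $\theta_0^n\ge 1$. What saves the argument is the sharper inclusion $H\subset V$ together with the universal bound $|V|\le\eta_1|T^m|$ with $\eta_1<1$ (this is exactly the estimate $(\ref{eqn:lengthV})$ in the paper, obtained from Lemma~\ref{eqn:bddisfstm}(i) and non-flatness). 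With $|H|\le\eta_1|T^m|$ in hand, one can choose $\theta_0$ close enough to $1$ so that $\eta_1\le\theta_0^{n_1-1}$, and then both regimes are covered. Without invoking this bound on $|V|$ your proof does not close the small-$n$ cases, so you cannot entirely dispense with the auxiliary estimate that drives the paper's Case 1.
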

\begin{proof} Let $\delta\in (0,\rho)$ be such that $|T^{m-1}|=(1+2\delta)|T^m|.$ Since $f^s(V)\subset T^{m-1}\setminus T^m$ we have $|f^s(V)|\le \delta |T^m|$. By part (i) of Lemma~\ref{eqn:bddisfstm},
$|fV|\le K\delta |f(T^m\setminus V)|.$
By non-flatness, there exist universal constants $K_1>1$ and $\eta_1\in (0,1)$ such that
$$\frac{|V|}{|T^m|}\le \min (\eta_1, K_1\delta^{1/\ell})=:\eta.$$
Since $H\subset V$, we obtain
\begin{equation}\label{eqn:lengthV}
|H|\le \eta |T^m|.
\end{equation}
Take $\gamma\in (0,1)$ such that $K_1^\gamma \eta_1^{1-\gamma}=1$.

\noindent
{\em Case 1.} $\delta< \theta^{n/2}$. Then
$$\eta\le (K_1\delta^{1/\ell})^\gamma \eta_1^{1-\gamma}\le \theta^{n \gamma/(2\ell)},$$
so we are done in this case.

\noindent
{\em Case 2.} $\delta \ge \theta^{n/2}$. By Lemma~\ref{lem:setQ}, $|f^t(H)|\le\theta^n |T^m|$.
So $f^t(H)$ is $\theta^{-n/2}$-well inside $T^{m-1}$. By Lemma~\ref{lem:Ffull} (ii), $f^{t-1}$ maps an interval $W\ni f(y)$ diffeomorphically onto $T^{m-1}$. Let $W_0$ be the component of $f^{-1}(W)$ which contains $y$. Then $W_0\subset T^m$.  By the Koebe principle and non-flatness, we obtain that
$$|H|\le C \theta^{n/2\ell}|W_0|\le C\theta^{n/2\ell} |T^m|,$$ where $C=C(\theta)$ is a constant.
Together with (\ref{eqn:lengthV}), this implies the statement.
\end{proof}

\begin{proof}[Proof of Proposition~\ref{prop:sizeofchildren}]
The second statement follows from the first by Lemma~\ref{lem:wellinsidenice}. In the following we shall prove the first statement.

By Lemmas~\ref{lem:noncentralnice} and~\ref{lem:largebounds}, the first statement holds in the case $1\le i<m$. In the following, we shall estimate the size of children of $T^m$.

If $(1+2\rho) T^m\subset T^{m-1}$, then by Lemma~\ref{lem:wellinsidenice}, $T^m$ is $\rho'$-nice for some $\rho'>0$ and so we are done again by Lemma~\ref{lem:largebounds}. We assume from now on that $(1+2\rho) T^m\not\subset T^{m-1}$, so that Lemmas~\ref{eqn:bddisfstm}, ~\ref{lem:setQ} and~\ref{lem:fromV} apply.

For each $i=1,2,\ldots$, let $S_i$ denote a transition time from $J_i$ to $T^m$. By definition, $f^{S_i-1}$ maps an interval
$\hJ_i$ which contains $f(J_i)$ diffeomorphically onto $T^m$.
Let $i(1)=\inf\{i\ge 1: f^{S_i}(c)\not\in Q\},$ and define inductively,
$$i(j+1)=\inf\{i> i(j): f^{S_{i}}(c)\not\in Q\}.$$
For $i\in \{1,2,\ldots, i(1)\}$, applying Lemma~\ref{lem:fromV} to $y=c$ and $n=i$, we obtain that
$|J_{i}|\le \theta_0^{i}|T^m|$.

It remains to show that for each $i(j)<i\le i(j+1)$,
$|J_{i}|\le \theta_2^{i-i(j)} |J_{i(j)}|$ holds for some constant $\theta_2=\theta_2(\tau)\in (0,1)$.
To this end, let $y:=f^{S_{i(j)}}(c)$ and we distinguish two cases.

{\em Case 1.} $y\in X$. Then $k:=S_{i(j)+1}-S_{i(j)}$ is the first return time of $y$ into $T^m$.
By Lemma~\ref{lem:Ffull}, $f^k$ maps an interval $W(y)$ with $y\in W(y)\subset X$ diffeomorphically onto $T\supset (1+2\tau)T^m$.
By Lemma~\ref{lem:setQ}, $|f^{S_{i(j)+1}}(J_i)|\le \theta^{i-i(j)-1}|T^m|$, so $f^{S_{i(j)+1}}(J_i)$ is
is $\tau \theta^{i(j)-i+1}$-well inside $T$.
By the Koebe principle, $f^{S_{i(j)}}(J_i)$ is $\tau'\theta'^{i(j)-i}$-well inside $W(y)$ for some constants $\tau'>0$ and $\theta'\in (0,1)$.
Applying the Koebe principle again to the diffeomorphism $f^{S_{i(j)}-1}:\hJ_{i(j)}\to T^m$ and using the non-flatness of critical point, we obtain the desired estimate.

{\em Case 2.} $y\in V$. In this case, applying Lemma~\ref{lem:fromV} to $y$ and $n=i-i(j)$, we obtain that
$|f^{S_{i(j)}}(J_i)|\le \theta_0^{i-i_j}|T^m|$.
So $f^{S_{i(j)}}(J_i)$ is $C\tilde{\theta}_0^{i_j-i}$-well inside $T^m$ for some constants $C>0$ and $\tilde{\theta}_0\in (0,1)$.
Applying the Koebe principle again to the diffeomorphism $f^{S_{i(j)}-1}:\hJ_{i(j)}\to T^m$ and the non-flatness of critical point, we obtain the desired estimate.
\end{proof}

\section{Pull back of empty space}\label{sec:emptyspace}
In this section, we will assume that $f$ is non-renormalizable and that $\omega(c)$ is a wild attractor.  We fix a suitable neighborhood $\Lambda$ of $\omega(c)$. For each small nice interval $T\ni c$, let $\Lambda(T)$ denote the set of points in $T$ which return to $T$ before escaping $\Lambda$. Then we define a parameter $\xi(T)$ which measures the relative size of the complement of $\Lambda(T)$ (``empty space'') in $T$, and study the distortion of this parameter under pull back by $f$. The main results are Propositions~\ref{prop:emptyspace} and~\ref{prop:central}.

Fix a small symmetric nice interval $I\ni c$ such that the union $\Lambda$ of components of $D(I)\cup I$ which intersects $\omega(c)$ satisfies $\Lambda\Subset [f^2(c), f(c)]$.
For $T\subset I$, let
$$\Lambda(T)=\{x\in [0,1]: \exists k\ge 1 \text{ such that } x, f(x), \ldots, f^{k-1}(x)\in\Lambda \text{ and }f^k(x)\in T\}.$$

For an interval $Y\subset [0,1]$ and a set $Y'\subset Y$, we will define a number $\l(Y'|Y)$ to measure how much the subset $Y'$ occupies in $Y$. Let $\mathcal F_Y$ be the set of diffeomorphisms of the form $f^s:J\to Y$.
Define
$$\l(Y'|Y)=\sup_{\phi\in\mathcal F_Y}\frac{|\phi^{-1}(Y')|}{|\phi^{-1}(Y)|},$$
and $$\xi(Y'|Y)=1-\l(Y'|Y).$$
For a nice interval $T\subset I$ with $T\ni c$, define
$$\xi(T)=\xi(\Lambda(T)\cap T|T).$$

{\em Remark.} For each small nice interval $T$, $\xi (T)>0$. Indeed, $f$ is topologically transitive on $[f(c), f^2(c)]\supset T$, so $T\setminus \Lambda(T)$ has non-empty interior. Moreover, since $\omega(c)$ is minimal, $\partial T\cap \omega(c)=\emptyset$, so there exists $\delta=\delta(T)>0$ such that each diffeomorphism $f^s: J\to T$ extends to a diffemorphism onto the $\delta$-neighborhood of $T$. By the Koebe principle, there exists a constant $C=C(T)>0$ such that
$$\frac{|(f^{s}|J)^{-1} (T \setminus \Lambda(T))|}{|J|}\ge C \frac{|T\setminus \Lambda(T)|}{|T|}>0.$$

\begin{lem}\label{lem:zeroemptyspace}
Suppose that $f$ has a wild attractor. Let $T_n\ni c$ be a sequence of nice intervals such that $|T_n|\to 0$ as $n\to \infty$. Then $\xi(T_n)\to 0$.
\end{lem}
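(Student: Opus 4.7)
\emph{Strategy.} My plan is to construct, for each $n$, a diffeomorphism $\phi_n=f^{s_n}|_{J_n}\in\mathcal{F}_{T_n}$ satisfying $|\phi_n^{-1}(\Lambda(T_n)\cap T_n)|/|J_n|\to 1$; this immediately yields $\lambda(\Lambda(T_n)\cap T_n\,|\,T_n)\to 1$, hence $\xi(T_n)\to 0$. The naive choice $\phi_n=\mathrm{id}$ gives only $\xi(T_n)\le|T_n\setminus\Lambda(T_n)|/|T_n|$, which need not vanish, so I will genuinely exploit the supremum in the definition of $\lambda$ by pulling back $T_n$ to a Lebesgue density point of an appropriate forward-invariant set.

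\emph{The good set $G$.} Set $G:=\{x\in\textrm{Rel}(\omega(c)) : f^k(x)\in\Lambda \text{ for all }k\ge 0\}$; this is forward $f$-invariant. Since $|\textrm{Rel}(\omega(c))|>0$ by hypothesis, and minimality of $\omega(c)$ (Proposition~\ref{prop:persistentrec}) forces $\omega(x)=\omega(c)\subset\Lambda$ for a.e.\ $x\in\textrm{Rel}(\omega(c))$, every such orbit lands in $G$ after finitely many iterates. Non-singularity of $f$ with respect to Lebesgue measure then gives $|G|>0$. Moreover, for any $y\in G\cap T_n$: the orbit returns to $T_n$ (as $c\in\omega(y)=\omega(c)$) while remaining in $\Lambda$, so $y\in\Lambda(T_n)$, and hence $G\cap T_n\subset\Lambda(T_n)\cap T_n$.

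\emph{Density point and pullback.} By the Lebesgue density theorem, pick a density point $x_0\in G$ with $x_0\ne c$ and $f^k(x_0)\ne c$ for all $k\ge 0$ (these conditions fail on a set of measure zero). Since $\omega(x_0)\ni c$, the orbit of $x_0$ enters $T_n$ at infinitely many times $s_{n,1}<s_{n,2}<\cdots$, and the pullback components $J_{n,k}$ of $T_n$ at $x_0$ are eventually nested and shrink to $\{x_0\}$ (using absence of wandering intervals). For each $n$, choose $k_n$ so that $J_n:=J_{n,k_n}$ has $|J_n|\le 1/n$ and moreover $J_n\cap\bigcup_{i=0}^{s_{n,k_n}-1}f^{-i}(c)=\emptyset$, which is possible since this union is a discrete set not containing $x_0$. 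Then no iterate $f^i(J_n)$ contains $c$, so $\phi_n:=f^{s_{n,k_n}}|_{J_n}:J_n\to T_n$ is a diffeomorphism and lies in $\mathcal{F}_{T_n}$. Forward invariance of $G$ gives $J_n\cap G\subset\phi_n^{-1}(G\cap T_n)\subset\phi_n^{-1}(\Lambda(T_n)\cap T_n)$, and since $x_0\in J_n\subset B(x_0,|J_n|)$ with $|J_n|\to 0$, Lebesgue density of $G$ at $x_0$ yields $|J_n\cap G|/|J_n|\to 1$. Therefore $\lambda(\Lambda(T_n)\cap T_n\,|\,T_n)\to 1$, proving $\xi(T_n)\to 0$. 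The main obstacle is keeping $\phi_n$ diffeomorphic despite $c\in\omega(x_0)$; this is addressed by shrinking $J_n$ enough that its iterates steer clear of $c$.
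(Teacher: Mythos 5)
Your overall strategy mirrors the paper's: produce a positive-measure set that stays in $\Lambda$ forever and accumulates at $c$, take a Lebesgue density point $x_0$, realize a member of $\mathcal{F}_{T_n}$ by pulling $T_n$ back to a small interval around $x_0$, and let density force $\lambda(\Lambda(T_n)\cap T_n\mid T_n)\to 1$. Your set $G$ and the paper's $X=\{x\in\mathcal{J}:\omega(x)\ni c\}$ play the same role (the paper gets $|X|>0$ more directly from Ma\~n\'e's theorem applied to $\mathcal{J}$), and the inclusion $G\cap T_n\subset \Lambda(T_n)\cap T_n$ is the same observation.

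The gap is in the construction of the diffeomorphism $\phi_n$. You pull back along the $k_n$-th entry time $s_{n,k_n}$ and assert you can choose $k_n$ so that $J_{n,k_n}$ misses $\bigcup_{i=0}^{s_{n,k_n}-1}f^{-i}(c)$ because that union is ``a discrete set not containing $x_0$.'' But this set grows with $k_n$ at exactly the rate at which $J_{n,k_n}$ shrinks, so you are chasing a moving target, and the claim is not justified. Indeed, if $c\in f^{i}(J_{n,k_n})$ for some $0<i<s_{n,k_n}$ then (by niceness) $f^{i}(J_{n,k_n})\subset T_n$, so $i$ must equal one of the intermediate entry times $s_{n,j}$, $j<k_n$; and if the return $f^{s_{n,j}}(x_0)$ lands in the central return domain $T_n^1$, the iterate $f^{s_{n,j}}(J_{n,k_n})$ genuinely can contain $c$. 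So deep iterated pullbacks are \emph{not} automatically diffeomorphic, and the ``steer clear of $c$'' step needs a real argument.

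The fix, which is what the paper does, is to use the \emph{first} entry time. Pick $x\in X$ (or your $G$) a density point with $x\notin T_{n_0}$ for some fixed $n_0$; since $|T_n|\to 0$ and all $T_n\ni c$, one has $x\notin T_n$ for all large $n$. Let $s_n$ be the first entry time of $x$ into $T_n$ and $J_n=\mathcal{L}_x(T_n)$. If the chain from $J_n$ to $T_n$ passed through $c$ at some intermediate time $0<i<s_n$, the corresponding interval would be a pullback of $T_n$ contained in $T_n$, so $f^i(x)\in T_n$, contradicting minimality of $s_n$; hence $f^{s_n}|J_n$ is automatically a diffeomorphism and lies in $\mathcal{F}_{T_n}$. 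The smallness $|J_n|\to 0$ you were trying to engineer by iterating is obtained for free here from $|T_n|\to 0$ together with the absence of wandering intervals. With that replacement your density computation goes through unchanged.
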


\begin{proof}
Assume by contradiction that there exists a sequence of nice intervals $T_n\ni c$ and a constant $\lambda>0$ such that $|T_n|\to 0$ and $\xi(T_n)\ge \lambda$, $n=1,2,\ldots$.
Since $f$ has a wild attractor, the non-escaping set $$\mathcal{J}=\{x\in [0,1]: f^k(x)\in \Lambda\text{ for all } k=0,1,\ldots\}$$ has positive Lebesgue measure.
Let $X=\{x\in \mathcal{J}:\omega(x)\ni c\}$. Then by M\~an\'e's theorem \cite{mane}, $|X|=|\mathcal{J}|>0$.
Let $n_0$ be large such that $|X\setminus T_{n_0}|>0$ and  let $x\in X \setminus T_{n_0}$ be a Lebesgue density point of $X$. For each $n\ge n_0$, let $s_n$ be the first entry time of $x$ under $f$ to $T_n$ and let $J_n=\mathcal{L}_x(T_n)$.
Then $f^{s_n}: J_n \to T_n$ is a diffeomorphism.
Since $f^{s_n}(X)\subset \Lambda(T_n)$,
we have
$$\frac{|J_n\cap X|}{|J_n|}\le \frac{|J_n\cap f^{-s_n}(\Lambda(T_n))|}{|J_n|}\le \lambda(T_n\cap \Lambda(T_n)|T_n) \le 1-\lambda.$$
Since $f$ has no wandering interval \cite{MS}, $|J_n|\to 0$. This contradicts  the assumption that $x$ is a Lebesgue density point of $X$.
\end{proof}

The following lemma is an improvement of \cite[Lemma 4.11]{li-shen-Haudoff}.

\begin{lem}\label{lem:capacity}
Let $T$ be small interval. Let $U_1, U_2,\ldots$ and $W_1,W_2, \cdots, W_k$ be pairwise disjoint
subintervals of $T$ and $Y\subset (\cup_i U_i)\cup (\cup_{i=1}^k W_i)$. Assume that
\begin{itemize}
\item for each $i$, $\lambda(Y\cap U_i|U_i)\le \lambda$;
\item for each $1\le j\le k$, $W_j$ is $\tau$-well inside $T$.
\end{itemize}
Then there exists $\eps=\eps(k,\tau)\in (0,1)$ such that
\begin{equation}\label{eqn:capacity}
1-\lambda(Y|T)\ge (1-\eps)(1-\lambda).
\end{equation}
Moreover, for a fixed $k$, $\eps(k,\tau)=O(\tau^{-1})$ as $\tau\to\infty$.
\end{lem}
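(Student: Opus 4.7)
My plan is to test each $\phi = f^s : J \to T$ in $\mathcal{F}_T$ separately, apply the hypothesis on the $U_i$'s directly on $\phi^{-1}(U_i)$, and use the Koebe principle to show that the pull-backs of the $W_j$'s can only fill a small fraction of $J$.

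Concretely, I would fix $\phi = f^s : J \to T$ in $\mathcal{F}_T$ and set $U_i^{*} = \phi^{-1}(U_i)$, $W_j^{*} = \phi^{-1}(W_j)$; these are pairwise disjoint subintervals of $J$. Since each restriction $\phi|_{U_i^*} : U_i^* \to U_i$ is again of the form $f^s$ and is a diffeomorphism, it lies in $\mathcal{F}_{U_i}$, so the hypothesis $\lambda(Y\cap U_i | U_i) \le \lambda$ yields $|\phi^{-1}(Y \cap U_i)| \le \lambda |U_i^{*}|$. Combining this with $Y \subset \bigcup_i U_i \cup \bigcup_j W_j$ and the disjointness inequality $\sum_i |U_i^{*}| \le |J| - \sum_j |W_j^{*}|$, I arrive at
$$\frac{|\phi^{-1}(Y)|}{|J|} \le \lambda + (1-\lambda) \sum_{j=1}^k \frac{|W_j^{*}|}{|J|}.$$

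To bound the remaining sum, I would apply part~(2) of Theorem~\ref{thm:koebe} to the diffeomorphism $\phi : J \to T$ and the subinterval $W_j^{*} \subset J$; since $\phi(W_j^{*}) = W_j$ is $\tau$-well inside $\phi(J) = T$ by hypothesis, the conclusion is that $W_j^{*}$ is $\tau'$-well inside $J$ with $\tau' = 0.9\tau^2/(1+2\tau)$. Hence $|W_j^{*}|/|J| \le 1/(1+2\tau')$, and summing over the $k$ indices gives $\sum_j |W_j^{*}|/|J| \le k/(1+2\tau')$. Taking the supremum over $\phi \in \mathcal{F}_T$ and setting $\eps(k,\tau) := k/(1+2\tau')$, I obtain $\lambda(Y|T) \le \lambda + (1-\lambda)\eps$, which rearranges to the desired inequality. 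Since $\tau' \sim 0.45\,\tau$ as $\tau \to \infty$, this immediately yields $\eps(k,\tau) = O(\tau^{-1})$ for fixed $k$, confirming the ``moreover'' clause.

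The main technical point to verify is that $\eps \in (0,1)$; the Koebe bound delivers this as soon as $k < 1 + 2\tau'$, which is automatic in the regime of interest (fixed $k$, large $\tau$) in which the lemma is subsequently applied. A minor issue is that Theorem~\ref{thm:koebe} requires the smallness hypothesis $|T| < \eta(f)$, which is fine since $T$ is assumed small.
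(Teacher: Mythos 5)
Your overall strategy matches the paper's: test each $\phi\in\mathcal{F}_T$, pull everything back, use the hypothesis on $\phi|_{U_i^*}\in\mathcal{F}_{U_i}$ to bound $|\phi^{-1}(Y\cap U_i)|\le\lambda|U_i^*|$, and reduce to controlling $\sum_j|W_j^*|/|J|$ via the Koebe principle. That reduction is correct and the algebra leading to $\lambda(Y|T)\le\lambda+(1-\lambda)\sum_j|W_j^*|/|J|$ is fine.

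The one genuine gap is in the packing estimate. You bound each $|W_j^*|/|J|\le 1/(1+2\tau')$ in isolation and sum, getting $\eps=k/(1+2\tau')$, which is not in $(0,1)$ once $k\ge 1+2\tau'$. You acknowledge this and wave at the ``regime of interest,'' but the lemma asserts $\eps(k,\tau)\in(0,1)$ with no restriction relating $k$ and $\tau$, and in the subsequent applications (e.g.\ Lemma~\ref{lem:capacity'} with $k=4$ and Proposition~\ref{prop:central}) the input $\tau$ is not assumed large, so your statement is genuinely weaker. The paper instead exploits the pairwise disjointness plus the well-inside condition \emph{jointly}: ordering $W_1^*,\dots,W_k^*$ left to right in $J$, writing $\delta_j=|W_j^*|/|J|$ and $\rho=1-\sum_j\delta_j$, the $\tau'$-well-inside property of each $W_j^*$ gives $\rho+\sum_{j'<j}\delta_{j'}\ge\tau'\delta_j$, whence by induction $\delta_j\le(\rho/\tau')(1+1/\tau')^{j-1}$ and $\rho\ge(1+1/\tau')^{-k}$. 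This yields $\sum_j|W_j^*|\le\bigl(1-(1+1/\tau')^{-k}\bigr)|J|$ and hence $\eps=1-(1+1/\tau')^{-k}$, which lies in $(0,1)$ for every $k\ge1$ and $\tau>0$, and still satisfies $\eps=O(\tau^{-1})$ for fixed $k$. Substituting this sharper packing bound into your argument closes the gap; without it, your proof establishes the lemma only under the extra hypothesis $k<1+2\tau'$.
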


\begin{proof}
Let $\mu_k(\tau)= (1+\tau^{-1})^{-k}$.
We first prove that
\begin{equation}\label{eqn:estW}
\sum_{j=1}^k |W_j|\le (1-\mu_k(\tau)) |T|.
\end{equation}
Without loss of generality, we may assume that $W_1, W_2, \ldots, W_k$ lie from left to right in $T$.
Let $\delta_j=|W_j|/|T|$ and let $\rho=1-\sum_{j=1}^k \delta_j$. Since the left component of $T\setminus W_j$ has length at least $\tau |W_j|$, we obtain
$$\rho \ge \tau \delta_1,$$
and for each $j=2,3,\ldots, k$,
$$\rho + \sum_{j'=1}^{j-1}\delta_{j'}  \ge \tau \delta_j.$$
By induction, it follows that for each $j=1,2,\ldots, k$,
$$\delta_j\le \frac{\rho}{\tau}\left(1+\tau^{-1}\right)^{j-1}.$$
Since $\rho+\delta_1+\cdots+\delta_k=1$, this implies that
$$\rho\ge (1+\tau^{-1})^{-k}=\mu_k(\tau),$$
hence $$\delta_1+\delta_2+\cdots+\delta_k\le 1-\mu_k(\tau).$$
The inequality (\ref{eqn:estW}) is proved.

Now let  $\tau'=0.9\frac{\tau^2}{1+2\tau}$ and $\eps(k,\tau)=1-\mu_k(\tau')$. Clearly, for a fixed $k$, $\eps(k,\tau)=O(\tau^{-1})$ as $\tau\to\infty$. It remains to show that (\ref{eqn:capacity}) holds with $\eps=\eps(k,\tau)$.
To this end, take an arbitrary diffeomorphism $\phi:T'\to T$ from the class $\mathcal F_T$. Let $U_i', W_j',Y'$ be the pre-images of $U_i, W_j ,Y$ under $\phi$ respectively.
By the Koebe principle, $W_j'$ is $\tau'$-well inside $T'$. Therefore as above, we obtain
$$\sum_{i=1}^k |W_j'| \le (1-\mu_k(\tau'))|T'|. $$
For each $i\ge 1$, $$|Y'\cap U_i'|\le \lambda(Y\cap U_i|U_i) |U_i'|\le \lambda |U_i'|.$$
Putting $U'=\bigcup U_i'$ and $W'=\bigcup_j W_j'$, we have $Y'\subset U'\cup W'$. Thus
\begin{align*}
|T'\setminus Y'| & \ge \sum_i |U_i'\setminus Y'|+|T'\setminus\left(U'\cup W'\right)| \\
                 & \ge (1-\lambda)(|U'|+ |T'\setminus(U'\cup W')|) \\
                 &=(1-\lambda)|T'\setminus W'| \\
                 &\ge (1-\lambda)\mu_k(\tau')|T'|.
\end{align*}
The inequality (\ref{eqn:capacity}) follows. 
\end{proof}

\begin{lem}\label{lem:capacity'}
Let $T$ be a small interval, let $T'$ be a unimodal pull back of $T$ by $f^s$, and let $Y'\subset T'$.
Assume that $Y:=f^s(Y')$ is covered by subintervals $U_i$ of $T$, $i=0,1,2,\ldots,$ such that
\begin{itemize}
\item for each $i\ge 0$, $U_i$ is $\tau$-well inside $T$;
\item for each $i\ge 1$, $\lambda(Y|U_i)\le \lambda$.
\end{itemize}
Then
\begin{equation}\label{eqn:capacity'}
1-\lambda(Y'|T')\ge (1-\eps(\tau)) (1-\lambda).
\end{equation}
Moreover, $\eps(\tau)=O(\tau^{-1/\ell})$ as $\tau\to\infty$.
\end{lem}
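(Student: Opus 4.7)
The plan is to transport the cover $\{U_i\}$ of $Y\subset T$ backwards through the unimodal map $\psi := f^{s+t} = f^s\circ \phi$, where $\phi = f^t:\widetilde T \to T'$ is an arbitrary diffeomorphism in $\mathcal F_{T'}$, and then invoke Lemma~\ref{lem:capacity} on $\widetilde T$. The map $\psi:\widetilde T \to \hat T := f^s(T')\subset T$ is unimodal with critical point $\widetilde c := \phi^{-1}(c)$; since $T'$ is a unimodal pull back, $f^{s-1}$ extends to a diffeomorphism on a neighborhood $A\supset f(T')$ onto $T$, so $\psi = (f^{s-1}|A)\circ f\circ \phi$ and the only obstruction to $\psi$ being a diffeomorphism is the fold of $f$ at $c$. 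Because $\phi^{-1}(Y')\subset \psi^{-1}(Y)\cap\widetilde T$, it will suffice to bound $|\widetilde T\setminus \psi^{-1}(Y)|/|\widetilde T|$ from below uniformly in $\phi$.

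I will first refine $\{U_i\}$ to a pairwise disjoint cover (by taking connected components of its union) and relabel so that $U_0$ is the unique interval containing $f^s(c)$, if any. Since $\hat T$ is a one-sided neighborhood of $f^s(c)$ in $T$ and each $U_i$ with $i\ge 1$ lies well inside $T$ and does not contain $f^s(c)$, either $U_i\cap \hat T=\emptyset$ or $U_i\subset \hat T$; in the latter case, $\widetilde U_i:=\psi^{-1}(U_i)\cap \widetilde T$ splits into at most two symmetric components $\widetilde U_i^{\pm}$, each mapped diffeomorphically onto $U_i$ by $\psi$. For any $f^u:J\to \widetilde U_i^{\pm}$ in $\mathcal F_{\widetilde U_i^{\pm}}$, the composition $f^{u+s+t}:J\to U_i$ lies in $\mathcal F_{U_i}$, so
\[
\frac{|(f^u|J)^{-1}(\psi^{-1}(Y)\cap \widetilde U_i^{\pm})|}{|J|}=\frac{|(f^{u+s+t}|J)^{-1}(Y\cap U_i)|}{|J|}\le \lambda(Y|U_i)\le \lambda,
\]
giving $\lambda(\psi^{-1}(Y)|\widetilde U_i^{\pm})\le \lambda$. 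The piece $\widetilde U_0:=\psi^{-1}(U_0)\cap \widetilde T$ is a single interval around $\widetilde c$ carrying no density bound.

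The heart of the argument is to show that $\widetilde U_0$ is $\tau_0$-well inside $\widetilde T$ with $\tau_0\ge C\tau^{1/\ell}$ for large $\tau$. I plan to chain three distortion estimates along the factorization of $\psi$: (a) Koebe (Theorem~\ref{thm:koebe}) for the diffeomorphism $(f^{s-1}|A)^{-1}:T\to A$ turns $U_0$ being $\tau$-well inside $T$ into $V_0:=(f^{s-1}|A)^{-1}(U_0)$ being $\tau_1$-well inside $A$ with $\tau_1=0.9\tau^2/(1+2\tau)\asymp \tau$; (b) since $V_0\ni f(c)$ is $\tau_1$-well inside $A$, which extends to both sides of $f(c)$ in contrast to the one-sided $f(T')$, the non-flatness of $f$ at $c$ together with the geometric symmetry assumption will imply that $W_0:=f^{-1}(V_0)\cap T'$ is a symmetric neighborhood of $c$ in $T'$ that is $\tau_2$-well inside $T'$ with $\tau_2\ge C'\tau_1^{1/\ell}$ (the exponent $1/\ell$ encodes the critical order); (c) Koebe for $\phi^{-1}$ transfers this across to give $\widetilde U_0=\phi^{-1}(W_0)$ being $\tau_0$-well inside $\widetilde T$ with $\tau_0\asymp \tau_2$. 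Composing yields $\tau_0\ge C\tau^{1/\ell}$.

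Finally, I apply Lemma~\ref{lem:capacity} to $\widetilde T$ with $k=1$, $W_1=\widetilde U_0$, and the collection $\{\widetilde U_i^{\pm}:i\ge 1\}$ as the $U$-intervals; pairwise disjointness is immediate since the $U_i$'s are disjoint in $T$ and $\widetilde U_0\ni \widetilde c$ is separated from each $\widetilde U_i^{\pm}$. The covering $\psi^{-1}(Y)\subset \widetilde U_0\cup \bigcup_{i\ge 1,\pm}\widetilde U_i^{\pm}$ follows from $Y\subset \bigcup_i U_i$. Lemma~\ref{lem:capacity} then yields $1-|\psi^{-1}(Y)|/|\widetilde T|\ge (1-\eps(1,\tau_0))(1-\lambda)$ with $\eps(1,\tau_0)=O(\tau_0^{-1})=O(\tau^{-1/\ell})$; taking the supremum over $\phi\in \mathcal F_{T'}$ gives (\ref{eqn:capacity'}). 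The case where no $U_i$ contains $f^s(c)$ is easier, handled by Lemma~\ref{lem:capacity} with $k=0$. The main obstacle will be step (b): carefully quantifying the interaction of the one-sided/two-sided mismatch between $f(T')$ and $A$ with the non-flatness of $f$ at $c$ of order $\ell$ to extract the $\tau_1^{1/\ell}$ rate.
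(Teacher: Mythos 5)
Your approach is fundamentally the same as the paper's: pull the cover $\{U_i\}$ back through the unimodal map $f^s$, distinguish the pull-back components that inherit the density bound from those that do not, and invoke Lemma~\ref{lem:capacity} with the latter as the $W$-intervals. Two remarks.

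First, the extra layer of pre-composing with an arbitrary $\phi\in\mathcal{F}_{T'}$ and taking a supremum afterwards is redundant: the conclusion of Lemma~\ref{lem:capacity} is a bound on $\lambda(\cdot\mid T')$, which already incorporates the supremum over $\mathcal{F}_{T'}$. The paper applies the lemma directly on $T'$, with the components $U_j'$ of $f^{-s}\bigl(\bigcup_i U_i\bigr)\cap T'$ playing the roles of the $U$- and $W$-intervals.

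Second, and more seriously, the relabeling step creates a genuine gap. You relabel so that $U_0$ becomes the unique interval containing $f^s(c)$, and then treat $\widetilde U_0=\psi^{-1}(U_0)\cap\widetilde T$ as the only piece without a density bound. But the hypothesis grants the density bound $\lambda(Y\mid U_i)\le\lambda$ only for $i\ge 1$ in the \emph{original} labeling; it says nothing about the original $U_0$, and there is no reason that $U_0$ should contain $f^s(c)$. Indeed, in the applications (Proposition~\ref{prop:emptyspace}, for instance) $U_0$ is the component of $\Lambda(T)\cap T$ containing $c$, while $f^s(c)$ can lie in any return domain of $T$. After your relabeling the original $U_0$ is some $U_j$ with $j\ge 1$, and its up to two diffeomorphic preimages in $\widetilde T$ carry no density bound, yet your argument treats every $\widetilde U_i^\pm$ with $i\ge 1$ as a good interval. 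For the same reason your fall-back case (``no $U_i$ contains $f^s(c)$, take $k=0$'') is wrong: the preimages of the original $U_0$ are still present and still unbounded. The correct count is up to three bad components --- the central interval around $c$ (if some $U_i$ contains $f^s(c)$) together with up to two diffeomorphic preimages of the original $U_0$ --- and all of them must be fed to Lemma~\ref{lem:capacity} as $W$-intervals; this is what the paper does by indexing so that the well-behaved components start at $j\ge 4$. Your step (b), where the exponent $1/\ell$ appears, is needed only for the central component; for the diffeomorphic preimages of $U_0$ the ordinary Koebe principle gives $\tau'\asymp\tau$, which is stronger, so the final estimate $\eps(\tau)=O(\tau^{-1/\ell})$ is unchanged. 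With this correction, and noting that the pre-composition with $\phi$ can simply be dropped, your argument matches the paper's.
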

\begin{proof}
For each $i\ge 1$, $f^{-s}(U_i)\cap T'$ has at most two components and if $f^s(c)\in U_i$ then
$f^{-s}(U_i)\cap T'$ is an interval. Let $U_j'$ be the components of $f^{-s}(\bigcup_{i=0}^\infty U_i)$
such that for each $j\ge 4$, $f^s| U_j'$ is a diffeomorphism onto $U_i$ for some $i\ge 1$, hence
$$\lambda(Y'|U_j')\le \lambda(Y|U_i)\le \lambda.$$
By the Koebe principle, each $U_j'$ is $\tau'$-well inside $T'$, where $\tau'$ is a constant depending only on $\tau$ and $\tau'=O(\tau^{1/\ell})$ as $\tau\to\infty$.
 Thus by Lemma~\ref{lem:capacity}, the statement follows.
\end{proof}

\begin{lem}\label{lem:emptyspace}
Let $T\subset I$ be a nice interval that contains $c$ such that $T^1$ is $\tau$-well inside $T$ and  let $K\subset T\setminus T^1$ be a component of $\Lambda(T)$. Then
there exists $\eps=\eps(\tau)>0$, such that
\begin{equation}\label{equa:emptyspace}
\l(K\cap \Lambda(T^1)| K) \le \frac{\eps}{\xi(T)+\eps}.
\end{equation}
Moreover, $\eps(\tau)=O(\tau^{-1})$ as $\tau\to\infty$.
\end{lem}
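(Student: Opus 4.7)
The plan is to reduce the bound on $\lambda(K\cap \Lambda(T^1)\mid K)$ to a self-referential estimate on $b := \lambda(W\mid T)$, where $W := T^1 \cup (\Lambda(T^1)\cap T)\subset T$, and then to close the recursion using the empty space $\xi(T)$ together with a Koebe bound on the relative size of $T^1$ in $T$.

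First I would set up the reduction. Since $K$ is a component of $\Lambda(T)$ contained in $T\setminus T^1$, every point of $K$ has the same first entry time $s$ into $T$ along orbits in $\Lambda$, and niceness of $T$ makes $f^s\colon K\to T$ a diffeomorphism. A direct case analysis---splitting on whether $f^s(y)\in T^1$ or $f^s(y)\in T\setminus T^1$, and noting that a point of $K$ can only reach $T^1$ at a time $\ge s$---shows that $y\in \Lambda(T^1)$ iff $f^s(y)\in W$, so $K\cap \Lambda(T^1) = (f^s|K)^{-1}(W)$. Composing any $\phi \in \mathcal F_K$ with $f^s|K$ produces an element of $\mathcal F_T$, whence $\lambda(K\cap \Lambda(T^1)\mid K)\le b$. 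The same case analysis, applied to any component $K_j$ of $\Lambda(T)\cap (T\setminus T^1)$, yields the self-similar identity $W\cap K_j = (f^{s_j}|K_j)^{-1}(W)$ with $f^{s_j}\colon K_j\to T$ a diffeomorphism; the remaining component $T^1$ of $\Lambda(T)\cap T$ is entirely contained in $W$.

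Next I would assemble the recursion. Writing $\Lambda(T)\cap T = T^1\sqcup\bigsqcup_j K_j$ and $E := T\setminus \Lambda(T)$, for any $\phi = f^k\colon J\to T$ in $\mathcal F_T$ the self-similarity gives $|\phi^{-1}(W\cap K_j)|\le b\,|\phi^{-1}(K_j)|$. Combining this with the partition $|J|=|\phi^{-1}(T^1)|+\sum_j|\phi^{-1}(K_j)|+|\phi^{-1}(E)|$ and with $|\phi^{-1}(E)|\ge \xi(T)|J|$ (immediate from $\lambda(\Lambda(T)\cap T\mid T)=1-\xi(T)$) yields
\[
\frac{|\phi^{-1}(W)|}{|J|} \le p(1-b) + b(1-\xi(T)),\qquad p := \frac{|\phi^{-1}(T^1)|}{|J|}.
\]
Since $T^1$ is $\tau$-well inside $T$, the Koebe principle (Theorem~\ref{thm:koebe}) applied to $\phi$ gives that $\phi^{-1}(T^1)$ is $\tau'$-well inside $J$ with $\tau'=0.9\tau^2/(1+2\tau)$, so $p\le \eps := 1/(1+2\tau')$. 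Taking the supremum over $\phi$ and rearranging turns the displayed inequality into $b\xi(T)\le \eps(1-b)$, i.e., $b\le \eps/(\xi(T)+\eps)$; combined with the reduction this is (\ref{equa:emptyspace}). The moreover statement $\eps(\tau) = O(\tau^{-1})$ is immediate from $\tau'\sim 0.45\tau$ as $\tau\to\infty$.

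The main obstacle is precisely the self-referential nature of the bound: $W$ reproduces itself inside every non-central return domain $K_j$ through a diffeomorphic pullback, so a naive estimate only returns $b\le b$. The saving is that each diffeomorphic pullback of $T$ to $J$ necessarily loses a definite $\xi(T)$-proportion to the empty set $E$, while the Koebe control caps at $\eps$ the proportion landing directly in $T^1$; together these contribute the sum $\xi(T)+\eps$ in the denominator, which is exactly what makes the recursion contract.
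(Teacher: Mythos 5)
Your proof is correct, and it takes a genuinely different (and arguably cleaner) route to the same estimate. The paper's argument iterates the first return map $R_T$ explicitly: it partitions $T$ into $U_0 = T\setminus\Lambda(T)$, $W_0=T^1$ and $V_0=(\Lambda(T)\cap T)\setminus T^1$, tracks at each level $k$ which points of $V_{k-1}$ next land in $U_0$, $V_0$ or $W_0$, proves the per-level ratio bound $|\phi^{-1}(W_k\cap J)|/|\phi^{-1}(U_k\cap J)|\le \eps/\xi(T)$, and then sums the resulting geometric-type series over $k$; to close that sum one needs $\bigcap_k V_k$ to have Lebesgue measure zero, which the paper gets from \mane's theorem. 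Your version packages the same first-step decomposition into the single self-similar identity $W\cap K_j=(f^{s_j}|K_j)^{-1}(W)$ for $W=T^1\cup(\Lambda(T^1)\cap T)$, observes that the family $\mathcal F_T$ is closed under post-composition with the return maps, and deduces the fixed-point inequality $b\le\eps(1-b)+b(1-\xi(T))$ for $b=\lambda(W\mid T)$, which you then solve algebraically. The underlying mechanism (Koebe caps the mass going directly into $T^1$ at $\eps$, while the empty space drains a definite $\xi(T)$-fraction at every return) is the same, but your formulation replaces the infinite summation and the appeal to \mane's theorem by a one-step recursion; the only extra ingredient needed to close it is the trivial a priori bound $b\le 1$ (so that $1-b\ge 0$), which holds since $W\subset T$. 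That is a small but genuine simplification of the paper's argument. The constant $\eps=(1+2\tau')^{-1}$ with $\tau'=0.9\tau^2/(1+2\tau)$ and the conclusion $\eps(\tau)=O(\tau^{-1})$ also match the paper exactly.

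One remark worth making explicit when you write this up: to identify the components $K_j$ of $\Lambda(T)\cap T$ with full return domains of $T$ (so that $f^{s_j}\colon K_j\to T$ is indeed a diffeomorphism and the first entry of a point of $K_j$ into $T^1$ cannot occur before time $s_j$), one uses that each interval $f^i(K_j)$, $0\le i<s_j$, is a pullback of $T$ and therefore, by niceness, is either contained in or disjoint from each component of $\Lambda$; hence the condition of remaining in $\Lambda$ until the first return to $T$ is constant on each return domain. This is used tacitly in the paper as well, but it is the hinge on which the self-similarity claim turns, so it deserves a sentence.
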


\begin{proof}
Let $U_0=T\setminus \Lambda(T)$, $V_0=(\Lambda(T)\cap T)\setminus T^1$ and
$W_0=T^1$. Moreover, for each $k\ge 1$, inductively define
\begin{align*}
U_k& =\{x\in V_{k-1}: R_T^k(x)\in U_0\},\\
V_k& =\{x\in V_{k-1}: R_T^k(x)\in V_0\},\\
W_k& =\{x\in V_{k-1}: R_T^k(x)\in W_0\}.
\end{align*}

Since $T^1$ is $\tau$-well inside $T$, by the Koebe principle, there exists $\eps=\eps(\tau)>0$ such that
for each $\psi\in \mathcal{F}_T$, we have
$$|\psi^{-1}(T^1)|\le \eps |\psi^{-1}(T)|,$$
where $\eps=\eps(\tau)=(1+2\theta)^{-1}$ and $\theta=0.9\tau^2/(1+2\tau)$. So
$\eps(\tau)=O(\tau^{-1})$ as $\tau\to\infty$.

For each component $J$ of $V_{k-1}$, $R_T^k|J$ is a diffeomorphism
onto $T$, so for each $\phi\in\mathcal{F}_J$, we have $R_T^k|J\circ \phi\in \mathcal{F}_T$. Therefore,
$$|\phi^{-1}(U_k\cap J)|\ge \xi(T)|\phi^{-1}(J)|,$$
and $$|\phi^{-1}(W_k\cap J)|\le \eps |\phi^{-1}(J)|.$$ So
$$\frac{|\phi^{-1}(U_k\cap J)|}{|\phi^{-1}(W_k\cap J)|}\ge \frac{\xi(T)}{\eps}.$$
By Ma\~n\'e's Theorem \cite{mane}, $\bigcap_k V_k$ has measure
zero. It follows that
$$\frac{|\phi^{-1}(T\cap W)|}{|\phi^{-1}(T)|}\le \frac{\eps}{\xi(T)+\eps},$$
where $$W:=\bigcup_{k=0}^\infty W_k.$$
Thus,
$$\lambda (T\cap W | T)\le \frac{\eps}{\xi(T)+\eps}.$$
For each component $K$ of $V_0$, since the first return map $R_T$ maps $K\cap \Lambda(T^1)$ onto $W$, we have
$\lambda(K\cap W|K)\le\lambda (T\cap W|T)$. The lemma follows.
\end{proof}

\begin{prop}\label{prop:emptyspace}
For any $\tau>0$, there exists $\eps=\eps(\tau)\in (0,1)$, such that for any $\tau$-nice interval $T\subset I$ with $T\ni c$ and any child $J$ of $T$,
$$\frac{\xi(J)}{\xi(T)}\ge \frac{1-\eps}{\xi(T)+\eps}.$$
Moreover,  $\eps(\tau)=O(\tau^{-1/\ell})$ as $\tau\to\infty$.
\end{prop}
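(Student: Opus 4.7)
The plan is to apply Lemma~\ref{lem:capacity'} with $T'=J$, $Y'=\Lambda(J)\cap J$, and $Y=f^s(Y')$, where $s$ is the transition time of $J$ into $T$. To check $Y\subset\Lambda(J)\cap T$, fix $x\in Y'$: by Lemma~\ref{lem:childtime} the first return time $k$ of $x$ to $J$ satisfies $k\ge s$, and since $x\in\Lambda(J)$ the orbit $x,f(x),\ldots,f^{k-1}(x)$ lies in $\Lambda$ with $f^k(x)\in J$, so $y:=f^s(x)\in T$ has $f^j(y)\in\Lambda$ for $0\le j<k-s$ and $f^{k-s}(y)=f^k(x)\in J$, giving $y\in\Lambda(J)$. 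Since $J$ and $T^1$ are both pull backs of the nice interval $T$ containing the critical point $c$, and the transition time $s$ of $J$ is at least that of $T^1$, the pull back nesting property from \S\ref{sec:prelimi} forces $J\subset T^1$, and hence $\Lambda(J)\subset\Lambda(T^1)$.

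To build the cover demanded by Lemma~\ref{lem:capacity'}, I invoke the dichotomy that each return domain $U$ of $T$ is either contained in $\Lambda(T)$ or disjoint from it: for $0\le i<r_U$ the image $f^i(U)$ is connected and lies in $\dom(I)\cup I$, so it is contained in a single component of $\dom(I)\cup I$, and this component either meets $\omega(c)$ (and hence lies in $\Lambda$) or is disjoint from $\Lambda$. Consequently $\Lambda(T)\cap T$ is a disjoint union of return domains of $T$. Setting $U_0:=T^1$ and enumerating as $U_1,U_2,\ldots$ the non-central return domains of $T$ contained in $\Lambda(T)$, the $\tau$-niceness of $T$ ensures each $U_i$ is $\tau$-well inside $T$, and $Y\subset\Lambda(T)\cap T\subset\bigcup_{i\ge 0}U_i$. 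For $i\ge 1$, points of $T$ immediately outside $U_i$ lie in $T\setminus\dom(T)\subset T\setminus\Lambda(T)$, so $U_i$ is in fact a component of $\Lambda(T)$ sitting inside $T\setminus T^1$. Lemma~\ref{lem:emptyspace} together with $\Lambda(J)\subset\Lambda(T^1)$ therefore gives
$$\lambda(Y|U_i)\le\lambda(\Lambda(T^1)\cap U_i|U_i)\le\frac{\eps_0(\tau)}{\xi(T)+\eps_0(\tau)},$$
with $\eps_0(\tau)=O(\tau^{-1})$.

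Setting $\lambda:=\eps_0/(\xi(T)+\eps_0)$ and applying Lemma~\ref{lem:capacity'} (whose own constant I denote $\eps_1(\tau)=O(\tau^{-1/\ell})$) yields
$$\xi(J)=1-\lambda(Y'|J)\ge(1-\eps_1(\tau))\cdot\frac{\xi(T)}{\xi(T)+\eps_0(\tau)}.$$
Taking $\eps(\tau):=\max(\eps_0(\tau),\eps_1(\tau))=O(\tau^{-1/\ell})$ and using the monotonicities $1-\eps_1\ge 1-\eps$ and $1/(\xi(T)+\eps_0)\ge 1/(\xi(T)+\eps)$ gives $\xi(J)/\xi(T)\ge(1-\eps)/(\xi(T)+\eps)$, as required. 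The delicate step is the return-domain dichotomy; once it is in hand, every cover element $U_i$ $(i\ge 1)$ is a genuine component of $\Lambda(T)$, Lemma~\ref{lem:emptyspace} applies directly, and the remainder is bookkeeping.
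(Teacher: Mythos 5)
Your proposal is essentially the same argument the paper gives: pull $\Lambda(J)\cap J$ forward by $f^s$, cover the image by the components of $\Lambda(T)\cap T$ (which, as you correctly point out via the return-domain dichotomy, coincide with the return domains of $T$ contained in $\Lambda(T)$), estimate $\lambda(Y|U_i)$ for $i\ge 1$ via Lemma~\ref{lem:emptyspace}, and then invoke Lemma~\ref{lem:capacity'} with $U_0=T^1$ as the exceptional cover element. The bookkeeping at the end combining $\eps_0$ and $\eps_1$ into a single $\eps=\max(\eps_0,\eps_1)=O(\tau^{-1/\ell})$ is correct.

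There is one small inaccuracy that is worth flagging even though it does not invalidate the argument. Your claim that $Y\subset\Lambda(J)\cap T$ is too strong. When the first return time $k$ of $x\in\Lambda(J)\cap J$ equals the transition time $s$ (which Lemma~\ref{lem:childtime} allows), you get $y=f^s(x)=f^k(x)\in J$, but the witness chain you exhibit has length $k-s=0$, whereas the definition of $\Lambda(J)$ requires a strictly positive iterate count; so you cannot conclude $y\in\Lambda(J)$ in that case. The correct containment (the one stated in the paper) is $Y\subset\Lambda(T^1)\cup T^1$. Your proof survives because the problematic points land in $J\subset T^1=U_0$, which is precisely the cover element that Lemma~\ref{lem:capacity'} exempts from the $\lambda(Y|U_i)\le\lambda$ hypothesis; for $i\ge 1$ one has $U_i\subset T\setminus T^1$, so $Y\cap U_i\subset(\Lambda(T^1)\cup T^1)\cap U_i=\Lambda(T^1)\cap U_i$, and the estimate you wrote holds. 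So: same route as the paper, with one local overstatement that the structure of Lemma~\ref{lem:capacity'} happens to render harmless.
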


\begin{proof}
Let $s$ be a transition time of $J$ to $T$. By Lemma~\ref{lem:childtime} $Y:=f^s(\Lambda(J)\cap J)\subset \Lambda(T^1)\cup T^1.$
Let $U_0, U_1, \ldots$ be the components of $\Lambda(T)\cap T$ such that $U_0\ni c$. Then for all $i\ge 0$, $U_i$ is $\tau$-well inside $T$.
By Lemma~\ref{lem:emptyspace}, for each $i\ge 1$,
$\xi (Y|U_i)\ge \xi(T)/(\xi(T)+\eps_1)$, where $\eps_1=\eps_1(\tau)=O(\tau^{-1})$ as $\tau\to\infty$.
By Lemma~\ref{lem:capacity'}, the statement follows.
\end{proof}

The previous proposition says that the empty space of a unimodal pull back does not decrease too much. Now we will show that the central cascade does not influence the empty space too much as well.

\begin{definition}
Given a maximal central cascade $T\supset T^1\supset\cdots \supset T^m$, an {\em inheritor} of $T$ is, by definition, a child $J$ of $T^{m'}$ for some $0\le m'\le m$ such that $J\subsetneq T^m$.
\end{definition}

\begin{prop}\label{prop:central}
Let $T\supset T^1\supset \dots \supset T^m$ be a maximal central cascade, where $T\ni c$ is a small symmetric $\tau$-nice interval.
Then there is a constant $C=C(\tau)>0$ such that for each inheritor $J$ of $T$, we have
$$\xi(J)\ge C \xi(T).$$
\end{prop}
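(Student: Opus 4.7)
The plan is to split by $m'$ and to follow the proof of Proposition~\ref{prop:emptyspace}, with the central cascade as the main new ingredient. When $m'=0$, the interval $T$ is $\tau$-nice by hypothesis and $J$ is a child of $T$, so Proposition~\ref{prop:emptyspace} directly yields $\xi(J)\ge C_0(\tau)\xi(T)$ with $C_0(\tau)=(1-\eps(\tau))/(1+\eps(\tau))$. It remains to handle $1\le m'\le m$.

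For such an $m'$, let $s'$ denote the transition time of $J$ to $T^{m'}$ and set $Y:=f^{s'}(\Lambda(J)\cap J)\subset T^{m'}$. Lemma~\ref{lem:childtime} forces any $x\in\Lambda(J)\cap J$ to have first return time to $J$ at least $s'$, so each $y\in Y$ enters $J\subset T^m$ under subsequent $\Lambda$-preserving iterates; thus $Y\subset (\Lambda(T^m)\cap T^{m'})\cup T^m$. I would then estimate $\l(Y|T^{m'})$ via Lemma~\ref{lem:capacity'} with $T_{\mathrm{there}}=T^{m'}$ and $T'_{\mathrm{there}}=J$, using the return domains of $T^{m'}$ together with the escape set as the cover. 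For $1\le m'<m$, Lemma~\ref{lem:noncentralnice} guarantees that the non-central return domains $U_i$ of $T^{m'}$ are $\tau_1$-well inside $T^{m'}$ for some $\tau_1=\tau_1(\tau)$. A Ma\~n\'e-type argument modeled on Lemma~\ref{lem:emptyspace} then gives the bound $\l(Y|U_i)\le \eps_*/(\xi(T)+\eps_*)$ with $\eps_*=\eps_*(\tau)$, the key point being that $R_{T^{m'}}$-orbits starting in $U_i$ and staying in $\Lambda$ must eventually enter $T^m\supseteq J$.

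The main obstacle is that the central return domain $T^{m'+1}$ is generally not well inside $T^{m'}$, and hence cannot be supplied as one of the well-inside $U_i$ in Lemma~\ref{lem:capacity'}. I would overcome this by descending the cascade recursively: the portion of $Y$ lying in $T^{m'+1}$ is again governed by orbits entering $J$, and $T^{m'+1}\supset T^{m'+2}\supset\cdots\supset T^m$ is itself a central sub-cascade to which the same decomposition applies. Iterating this reduction and collecting the non-central contributions at each level should yield a uniform constant $C_1(\tau)>0$ independent of the cascade length, with the crucial inputs being the exponential decay of inheritor sizes from Proposition~\ref{prop:sizeofchildren} and the diffeomorphic pullback extensions of Lemma~\ref{lem:Ffull}(ii) outside the bad set $Q\cup V$. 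Combining with the case $m'=0$ then gives the desired $\xi(J)\ge C(\tau)\xi(T)$ with $C(\tau)=\min(C_0(\tau),C_1(\tau))$. The delicate part is precisely this recursive descent: at each level one must show that the contribution of the non-well-inside central return domain does not overwhelm the gain from the non-central domains, which is where the size-decay estimates of Proposition~\ref{prop:sizeofchildren} become essential.
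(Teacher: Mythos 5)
Your treatment of the case $m'=0$ matches the paper: Proposition~\ref{prop:emptyspace} applies directly since $T$ is $\tau$-nice. For $1\le m'\le m$, however, your proposed ``recursive descent'' through the cascade levels contains a genuine gap that the paper's proof is specifically designed to avoid.

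The obstacle you correctly identify---that $T^{m'+1}$ need not be well inside $T^{m'}$, so it cannot be one of the $U_i$ in Lemma~\ref{lem:capacity'}---is real, but your proposed fix does not resolve it. Descending recursively through $T^{m'}\supset T^{m'+1}\supset\cdots\supset T^m$ and applying a capacity estimate at each level incurs a multiplicative factor of the form $1-\eps(\tau)$ at each step, where $\eps(\tau)$ is \emph{uniform over the cascade levels} (Lemma~\ref{lem:noncentralnice} only gives a fixed $\tau'$-non-central-niceness for all $T^i$, $1\le i<m$). Since a maximal central cascade can be arbitrarily long, the accumulated factor $(1-\eps(\tau))^{m-m'}$ is not bounded below by any $C(\tau)>0$. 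Your appeal to Proposition~\ref{prop:sizeofchildren} does not close this hole: that result controls the sizes and niceness of the \emph{children} $J_1\supsetneq J_2\supsetneq\cdots$ of a fixed $T^i$, not the nesting geometry of consecutive cascade levels $T^i\supset T^{i+1}$, which in a long almost-saturated cascade shrinks arbitrarily slowly.

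The paper avoids the recursion entirely. It invokes Lemma~\ref{lem:capacity'} \emph{once}, using a cover of $Y=f^t(\Lambda(J)\cap J)$ by the components of $E(T)\cap T^{m'}$, of $X\setminus\overline{Q\cup V_0}$, of $V_0$, and of $Q'$. The crucial point is that the extended first-entry map $F_T$ of Lemma~\ref{lem:Ffull} maps every component of $E(T)$---across all cascade levels simultaneously---diffeomorphically onto the full interval $T$. This collapses the whole cascade in one stroke and yields (Claims~1 and~2) a uniform well-inside constant and a uniform Ma\~n\'e-type estimate $\xi(Y|U_i)\ge C_0\xi(T)$ for every piece of the cover, independent of $m$. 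The deep part of $T^{m'}$ near $c$ is covered not by $T^{m'+1}$ but by $V_0$, $X$, and $Q'$, which are well inside $T^m$ (and hence inside $T^{m'}$) by Lemma~\ref{eqn:bddisfstm} and Claim~1. To make your argument work, you would need to replace the per-level recursion by some analogue of this cascade-uniform decomposition, or else prove summable decay of the per-level error, which does not follow from the tools you cite.

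Finally, note that the paper also handles a subtlety you did not address: assumption $(*)$ (that components of $D(T)\cap(T\setminus T^1)$ are well inside the \emph{annulus} $T\setminus T^1$) is stronger than $T$ being $\tau$-nice, and fails in general for $T$ itself; the paper reduces $m'\le 1$ to Proposition~\ref{prop:emptyspace} and verifies $(*)$ for the shifted cascade starting at $T^1$ via Theorem~\ref{thm:SV}.
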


\newcommand{\hK}{\widehat{K}}
\begin{proof}
Let us first prove the proposition under the following assumption:
$$(*) \mbox{ each component of $D(T)\cap (T\setminus T^1)$ is $\tau$-well inside $T\setminus T^1$.}$$
Let $E_T$, $F_T$, $V,$ $Q,$ and $X$ be as defined in \S~\ref{sec:sizeofchildren}. Let $V_0$ be the component of $T^m\setminus \overline{Q}$ which contains $c$. Let $Q'=Q\cap D(X\cup V_0)$. Note that $X\cup V_0$ is a nice set and for each component $K$ of $Q'$, the first entry time of $K$ into $X\cup V_0$ is of the form $ns$ and $f^{ns}$ maps a neighborhood of $K$ in $Q$ diffeomorphically onto $T^m$.

{\bf Claim 1.} There exists a constant $\tau'>0$ such that
\begin{enumerate}
\item[(1a)] for each $0\le i<m$, each component of $E(T)\cap (T^i\setminus T^{i+1})$ is $\tau'$-well inside $T^i$;
\item[(1b)] $V$ is $\tau'$-well inside $V_0$;
\item[(1c)] each component of $X$ is $\tau'$-well inside $T^m$;
\item[(1d)] each component of $Q'$ is $\tau'$-well inside $T^m$.
\end{enumerate}
\begin{proof}[Proof of Claim 1.] (1a). By assumption, if $K$ is a component of $E(T)\cap (T\setminus T^1)$, then
$K$ is $\tau$-well inside $T\setminus T^1\subset T$.
For each $1\le i<m$,
$f^{is}$ maps each component of $T^i\setminus T^{i+1}$ diffeomorphically onto $T\setminus T^1$ and for each component $K$ of
$E(T)\cap (T^i\setminus T^{i+1})$, $K'=f^{is}(K)$
is a component of $E(T)\cap (T\setminus T^1)$. The statement follows by the Koebe principle.

(1b). Note that $V_0$ is a unimodal pull back of the component of $T^{m-1}\setminus T^m$ which contains $f^s(c)$. Since $f^s(V)\subset E(T)$, by (1a), $f^s(V)$ is well inside $T^{m-1}\setminus T^m$.
Thus the statement follows by the Koebe principle and non-flatness of critical point. (We need to redefine the constant $\tau'$.)

(1c). It also follows from (1a) by the Koebe principle and non-flatness of critical point.

(1d) follows from (1b) and (1c) and the observation on the components $Q'$ by the Koebe principle.
\end{proof}
{\bf Claim 2.}
There exists a constant $C_0>0$ such that
\begin{enumerate}
\item[(2a)]  for each component $K$ of $E(T)\cap (T\setminus T^1)$,
we have
\begin{equation*} 
\xi(\Lambda(T^1)\cap K|K)\ge C_0\xi(T); 
\end{equation*}
\item [(2b)] for each $1\le i<m$ and each component $K$ of $E(T)\cap (T^i\setminus T^{i+1})$, we have
\begin{equation*} 
\xi(\Lambda(T^i)\cap K|K)\ge C_0\xi(T); 
\end{equation*}
\item[(2c)] for each component $K$ of $X$, we have
\begin{equation*} 
\xi(\Lambda(V)\cap K|K)\ge C_0\xi(T);
\end{equation*}
\item[(2d)] for the interval $V_0$, we have
\begin{equation*} 
\xi((\Lambda(V)\cup V)\cap V_0|V_0)\ge C_0\xi(T);
\end{equation*}
\item[(2e)] for each component $K$ of $Q'$, we have
 \begin{equation*} 
\xi(\Lambda(V)\cap K|K)\ge C_0\xi(T).
\end{equation*}
\end{enumerate}
\begin{proof}[Proof of Claim 2.] (2a) follows from Lemma~\ref{lem:emptyspace}.

(2b) follows from (2a) and the observation that $K'=f^{is}(K)$ is a component of $E(T)\cap (T\setminus T^1)$ and
$f^{is}(K\cap \Lambda(T^i))\subset K'\cap \Lambda(T^i)\subset K'\cap \Lambda(T^1)$.

(2c) follows similarly.

(2d). The set $(\Lambda(V)\cup V)\cap V_0$ is covered by $V$ and the components of $X\cap V_0$. The statement follows from (1b) and (2c) by Lemma~\ref{lem:capacity}.

(2e) follows from (2c) and (2d) by the observation on $Q'$.
\end{proof}

Now suppose $J\subsetneq T^m$ is a child of $T^{m'}$ for some $0\le m'\le m$.
Let $t$ be a transition time from $J$ to $T^{m'}$. Note that $J\subset V$, so
$$Y:=f^t(\Lambda(J)\cap J)\subset (\Lambda(J)\cup J)\cap T^{m'}\subset (\Lambda(V)\cup V)\cap T^{m'}.$$
Let $U_0, U_1, U_2, \ldots$ be the components of $E(T)\cap T^{m'}$, $X\setminus \overline{Q\cup V_0}$, $V_0$ and $Q'$.
These sets cover $Y$. Each of these intervals are uniformly well inside $T^{m'}$ and $\xi(Y|U_i)\ge C_0\xi(T)$. By Lemma~\ref{lem:capacity'}, it follows that
$\xi(J)\ge C\xi(T),$ where $C>0$ is a constant.

We have completed the proof of the proposition under the assumption (*). For the general case, by Proposition~\ref{prop:emptyspace}, we may assume $m\ge m'\ge 2$, so $T^1\supset T^2\supset \cdots T^m$ is also a maximal central
cascade. We claim that each component $K$ of $D(T^1)\cap (T^1\setminus T^2)$ is $\tau_1$-well inside $T^1\setminus T^2$ for some $\tau_1>0$.
Indeed, the first return time of $K$ into $T^1$ is greater than $s$. Since $T^1$ is well inside $T$, $f^s(K)$ is well inside a component $K'$ of $D(T)$, by Theorem~\ref{thm:SV}. Thus $f^s(K)$ is well inside $T\setminus T^1$, which implies that $K$ is well inside $T^1\setminus T^2$. Applying the above argument to the maximal central cascade $T^1\supset T^2\supset \cdots \supset T^m$ proves the statement.
\end{proof}

\section{Proof of the Reduced Main Theorem}\label{sec:reducedmainthm}
We continue to assume that $f$ is non-renormalizable and has a Cantor attractor $\omega(c)$. Fix a neighborhood $\Lambda$ of $\omega(c)$ as in the previous section.
Let $Y\ni c$ be a symmetric nice interval which is necessarily contained in $(f(c), f^2(c))$. Let $\mathcal{N}_Y=\{n\ge 1: f^n(c)\in Y\}$ and for each $n\in\mathcal{N}_Y$, let $Y_{-n}$ denote the pull back of $Y$ by $f^n$ which contains $c$. To obtain an upper bound for the number of children of $Y_{-n}$, we first apply Propositions~\ref{prop:sizeofchildren}, ~\ref{prop:emptyspace} and ~\ref{prop:central} to obtain lower bounds on $\xi(Y_{-n})$, together with niceness control on young children. Then we apply Proposition~\ref{prop:emptyspace} again to obtain the desired upper bound: if $Y_{-n}$ has too many children, then some grandchild of $Y_{-n}$ has a large ``empty space'' which is ruled out by Lemma~\ref{lem:zeroemptyspace}.

Since $f$ is non-renormalizable, we have
\begin{equation}\label{eqn:t-n2zero}
\lim_{\substack{n\in\mathcal{N}_Y\\ n\to\infty}} |Y_{-n}|= 0.
\end{equation}
For $n\in\mathcal{N}_Y$ with $n\ge 1$, we shall define a positive integer $M_n(Y)$, called the {\it essential order} of $Y_{-n}$. 
Let $\{Y_i\}_{i=-n}^0$ be the chain with $Y_0=Y$. Let $0=i_0>i_1>\dots>i_p=-n$ be all the integers such that $Y_{i_j}\ni c$. So $Y_{i_j}$ is a child of $Y_{i_{j-1}}$ with the transition time $s_j=i_{j-1}-i_{j}$,  for each $1\le j \le p$. By Lemma~\ref{lem:childtime}, $s_1 \le s_2 \le \dots \le s_p$. Define
$$M_n(Y)=\#\{s_j: 1\le j\le p\}.$$
Let $$\mathcal{N}_M(Y)=\{n\in\mathcal{N}_Y: M_n(Y)\le M\}.$$

\begin{prop}\label{prop:order}
There exists a universal constant $C_0>0$ such that for any symmetric nice interval $Y\ni c$ and $n\in\mathcal{N}_Y$, we have $M_n(Y)\le C_0\log n$.
Moreover, there exists a universal constant $\kappa$ such that the transition time from the second child of $Y_{-n}$ to $Y_{-n}$ is greater than $\kappa n$.
\end{prop}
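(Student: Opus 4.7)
The plan is to prove a stronger quantitative estimate on the sequence of transition times $s_1\le s_2\le\cdots\le s_p$ in the chain $\{Y_i\}_{i=-n}^0$, which satisfies $\sum_{j=1}^p s_j = n$. Specifically, I will aim to establish
\begin{equation}\label{eqn:planstep}
s_{j+1} \ge c_1\,|i_j| = c_1(s_1+\cdots+s_j) \quad \text{whenever } s_{j+1}>s_j,
\end{equation}
for some universal constant $c_1>0$. Writing $S_j=|i_j|$, this says the cumulative time multiplies by at least $1+c_1$ at every strict increase in the transition-time sequence.

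Granting \eqref{eqn:planstep}, both statements of the proposition follow at once. For the first, there are exactly $M_n(Y)-1$ strict increases in $s_1\le\cdots\le s_p$, so iterating \eqref{eqn:planstep} at each of them gives $n=S_p\ge(1+c_1)^{M_n(Y)-1}$, yielding $M_n(Y)\le 1 + \log n/\log(1+c_1)$. For the second, let $J_1=Y_{-n}^1$ and $J_2$ be the first two children of $Y_{-n}$ with transition times $t_1<t_2$. Applying Lemma~\ref{lem:childtime} to $Y_{-n}=Y_{i_p}$ (a child of $Y_{i_{p-1}}$ with transition time $s_p$) shows that the first return of $c$ to $Y_{-n}$ is $\ge s_p$; since $t_1$ equals that first return time, $t_1\ge s_p$ and hence $t_2>s_p$. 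Extending the chain by $J_2=Y_{-n-t_2}$ produces a new chain with transition-time sequence $s_1\le\cdots\le s_p<t_2$, and \eqref{eqn:planstep} applied to this final strict increase gives $t_2\ge c_1|i_p|=c_1 n$, establishing the second statement with $\kappa:=c_1$.

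To prove \eqref{eqn:planstep} I would decompose the sequence into maximal central cascades. The $k$-th cascade carries a common transition time $t_k$ of depth $m_k$, contributing $m_k t_k$ to the cumulative time; when it ends, the next transition time $t_{k+1}$ satisfies $t_{k+1}>t_k$. The plan is an induction on $k$ showing $t_{k+1}\ge c_1\,(|i_{j_k-1}|+m_k t_k)$. The ingredients are Lemma~\ref{lem:childtime} (so $t_{k+1}$ is at least the first return of $c$ to the deepest cascade interval $Y_{i_{j_k+m_k-1}}$), Proposition~\ref{prop:sizeofchildren} (geometric decay of child lengths within a cascade, forcing long escape times after deep cascades), Theorem~\ref{thm:realbounds} (real-bound scale separation at cascade boundaries together with distortion control via the Koebe principle), and the inductive hypothesis to absorb the previous cumulative time $|i_{j_k-1}|$ into a universal constant multiple of $m_k t_k$.

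The main obstacle is the quantitative escape estimate: after a maximal central cascade of depth $m_k$ with common return value $t_k$, the next transition time $t_{k+1}$ must be at least of order $m_k t_k$ (plus the inductive contribution from earlier cascades). Long central cascades do not force inner intervals to shrink at a universally controlled rate by themselves, so the required geometric decay has to be extracted from Proposition~\ref{prop:sizeofchildren} combined with non-flatness of $c$ and the real bounds. Balancing the cascade's local contribution $m_k t_k$ against the earlier accumulated time $|i_{j_k-1}|$ within the induction is the delicate technical point.
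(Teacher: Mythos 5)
Your overall plan --- prove the single estimate $s_{j+1}\ge c_1(s_1+\cdots+s_j)$ at every strict increase and derive both parts of the proposition from it --- is sound and the claimed estimate is in fact true; it is equivalent (after a geometric-series bound) to the Fibonacci-type recursion the paper establishes. Your derivations of the two conclusions from that estimate, including the device of appending the second child to the chain and observing $t_2>s_p$ via Lemma~\ref{lem:childtime}, match the paper's structure. The trouble is with how you propose to prove the key estimate.

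You diagnose the difficulty correctly --- after a maximal central cascade of depth $m_k$ with common return time $t_k$, you must show the next transition time is at least of order $m_k t_k$ --- but you reach for the wrong tools. Proposition~\ref{prop:sizeofchildren}, Theorem~\ref{thm:realbounds}, the Koebe principle and non-flatness control the \emph{geometry} of intervals (lengths, well-insideness), and they do \emph{not} convert into lower bounds on transition \emph{times}; a $k$-th child being exponentially small says nothing a priori about how long it takes to reach it. The missing ingredient is purely combinatorial, and indeed the paper's Remark after the proposition notes that only non-renormalizability is used. The observation is this: during a central cascade $Y_{i_{m(j-1)}}\supset\cdots\supset Y_{i_{m(j)-1}}$, one has $f^{ks_{m(j-1)}}(c)\in Y_{i_{m(j)-k-1}}\setminus Y_{i_{m(j)-k}}$ for $1\le k\le m(j)-m(j-1)$; that is, iterating the common return map marches the critical orbit \emph{up} the cascade one level per step. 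It therefore takes at least $(m(j)-m(j-1))\,s_{m(j-1)}$ iterates to escape the cascade, and once out, the remaining time to the next critical level is a return time to $Y_{i_{m(j-1)-1}}$, hence at least $s_{m(j-2)}$ by Lemma~\ref{lem:childtime}. This gives $s_{m(j)}\ge (m(j)-m(j-1))s_{m(j-1)}+s_{m(j-2)}$, from which the Fibonacci-type growth $s_{m(j)}\ge s_{m(j-1)}+s_{m(j-2)}$ and hence $M_n(Y)=O(\log n)$ follow immediately. Your use of Lemma~\ref{lem:childtime} on the deepest cascade level only yields $t_{k+1}\ge t_k$, which by itself gives no gain; the cascade-descent observation is what produces the factor $m_k$. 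Once you have this combinatorial recursion, the metric machinery is unnecessary, and your estimate \eqref{eqn:planstep} drops out of the exponential growth of the cascade contributions $S_j=s_{m(j)}(m(j+1)-m(j))$.
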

\begin{proof}
Let $n'>n$ be such that $n'\in \mathcal{N}_Y$ and such that $Y_{-n'}$ is the second child of $Y_{-n}$. Let $\{Y_i\}_{i=-n'}^0$, $\{i_j\}_{j=0}^{p'}$ and $\{s_j\}_{j=1}^{p'}$ be defined as above, and let $M'=M_{n'}(Y)$. Note that $M:=M_n(Y)=M'-1$. Define $m(0)=0$,  $m(1)=1$, and define inductively integers $m(1)<m(2)<\dots<m(M')\le p'$ by
$$m(j)=\inf \{m>m(j-1): s_m>s_{m(j-1)}\}, \, \, j=2,3,\ldots.$$
For $1\le j\le M$, let $r_j$ denote the minimal return time of points in $Y_{i_{m(j)-1}}\cap \omega(c)$ to $Y_{i_{m(j)-1}}$.
Let us show that for each $2\le j\le M'$,
\begin{equation}\label{eqn:smrec}
s_{m(j)}\ge s_{m(j-1)}(m(j)-m(j-1))+ r_{j-1}.
\end{equation}
Indeed, in the case $m(j)=m(j-1)+1$, this is clear as $s_{m(j)}-s_{m(j-1)}$ is a return time of $f^{s_{m(j-1)}}(c)$ to $Y_{i_{m(j-1)-1}}$ 
When $m(j)-m(j-1)>1$, observe that $f^{s_{m(j-1)}}(c)\in Y_{i_{m(j)-2}}\setminus Y_{i_{m(j)-1}}$ and hence $$f^{k s_{m(j-1)}}(c)\in Y_{i_{m(j)-k-1}}\setminus Y_{i_{m(j)-k}}$$ for each $1\le k\le m(j)-m(j-1)$.
Since $Y_{i_{m(j)}}$ is a child of $Y_{i_{m(j)-1}}$, we have $s_{m(j)}> (m(j)-m(j-1)) s_{m(j-1)}$ and that $s_{m(j)}-(m(j)-m(j-1)) s_{m(j-1)}$ is a return time of $f^{(m(j)-m(j-1)) s_{m(j-1)}}(c)$ to 
$Y_{i_{m(j-1)-1}}$. The inequality (\ref{eqn:smrec}) follows.

By Lemma~\ref{lem:childtime} , for $2\le j\le M'$, $r_j\ge s_{m(j-1)}$. Thus for $3\le j\le M'$,
\begin{equation}\label{eqn:recsm}
s_{m(j)}\ge s_{m(j-1)}(m(j)-m(j-1))+s_{m(j-2)}> s_{m(j-1)}+s_{m(j-2)}.
\end{equation}
Thus $s_{m(j)}$ grows at least as fast as the Fibonacci sequence. Since $n\ge s_{m(M)}$,
it follows that $M\le C_0\log n$ for some universal constant $C_0>0$.

To prove the last statement, note that $s_{m(M')}$ is the transition time from $Y_{-n'}$ to $Y_{-n}$. Write $S_j=s_{m(j)}(m(j+1)-m(j))$. Then (\ref{eqn:recsm}) implies that $S_j$ is strictly increasing in $j$ and $S_{j+1}\ge S_j+S_{j-2}$. As $n=S_1+S_2+\cdots+S_M$, it follows that $S_M/n$ is bounded away from zero. By (\ref{eqn:recsm}) again, $n'-n=s_{m(M+1)}$ is at least comparable to $n$.
\end{proof}
\begin{remark} In the proof, we only used that $f$ is non-renormalizable. Thus this proposition holds whenever $f$ is non-renormalizable.
\end{remark}
This proposition allows us to obtain a lower bound on $q(\mathcal{Y},n)$ for a nice cover $\mathcal{Y}$, which implies a lower bound for the topological complexity function for maps with special combinatorics. See Theorem~\ref{thm:lower} and Corollary~\ref{cor:special} at the end of this section.

\begin{lem}\label{lem:finite} Given a symmetric nice interval $Y$, for each $M\ge 1$, we have $\#\mathcal{N}_M(Y)<\infty$.
\end{lem}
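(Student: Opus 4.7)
The plan is to prove $\#\mathcal{N}_M(Y) < \infty$ by induction on $M$. The two ingredients are: (i) the monotonicity $s_1 \le s_2 \le \cdots \le s_p$ coming from Lemma~\ref{lem:childtime}, and (ii) the fact that, since $f$ is non-renormalizable, every central cascade terminates after finitely many steps.

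For the base case $M=1$, all transition times $s_j$ take a common value $s$. Whenever $s_{j+1} = s_j$, Lemma~\ref{lem:childtime} applied to $Y_{i_j} \subset Y_{i_{j-1}}$ forces the first return time of $c$ to $Y_{i_j}$ to be exactly $s_j = s$, so $Y_{i_{j+1}}$ --- the unique child of $Y_{i_j}$ with transition time $s$ --- must be the central return domain of $Y_{i_j}$. Iterating, the chain $Y = Y_{i_0} \supset Y_{i_1} \supset \cdots \supset Y_{i_p}$ is an initial segment of the maximal central cascade $Y \supset Y^1 \supset \cdots \supset Y^{m(Y)}$ of $Y$. Non-renormalizability forces $m(Y) < \infty$, so $\mathcal{N}_1(Y) \subset \{s, 2s, \ldots, m(Y) s\}$ is finite.

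For the inductive step, assume the claim for $M-1$ and every symmetric nice interval. Given $n \in \mathcal{N}_M(Y)$, let $m(1) = 1 < m(2) < \cdots < m(M)$ mark the positions where the transition time strictly increases. The first block $\{1, 2, \ldots, m(2)-1\}$ has constant transition time, so by the base-case argument $Y_{i_{m(2)-1}}$ coincides with the $k_1$-th interval $Y^{k_1}$ in the central cascade of $Y$, for some $k_1 \in \{1, 2, \ldots, m(Y)\}$. Hence there are only finitely many possibilities for $Y_{i_{m(2)-1}}$, each a symmetric nice interval. The tail $Y^{k_1} \supset Y_{i_{m(2)}} \supset \cdots \supset Y_{i_p} = Y_{-n}$ exhibits $Y_{-n}$ as the critical pullback of $Y^{k_1}$ by $f^{n - k_1 s_1}$, and its transition times $s_{m(2)} < s_{m(3)} < \cdots < s_{m(M)}$ take exactly $M-1$ distinct values. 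Thus $n - k_1 s_1 \in \mathcal{N}_{M-1}(Y^{k_1})$, which is finite by the inductive hypothesis. Summing over the finitely many choices of $k_1$ yields $\#\mathcal{N}_M(Y) < \infty$.

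The only step needing a touch of care is the identification in the first block of an equal-transition run with an initial segment of a central cascade; this drops out cleanly from the strict lower bound in Lemma~\ref{lem:childtime}. Notably, no real-bound inputs or empty-space estimates from Sections~\ref{sec:pre}--\ref{sec:emptyspace} are required for this lemma, which is purely a combinatorial finiteness statement.
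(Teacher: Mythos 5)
There is a genuine gap. Your iteration via Lemma~\ref{lem:childtime} shows that for $j\ge 1$, if $s_{j+1}=s_j$ then $Y_{i_{j+1}}$ is the central return domain of $Y_{i_j}$ — this part is correct. But the iteration provides no information about where $Y_{i_1}$ sits inside $Y$: Lemma~\ref{lem:childtime} needs $Y_{i_j}$ to be a child of $Y_{i_{j-1}}$, and there is no $Y_{i_{-1}}$. In general $Y_{i_1}$ is an arbitrary child of $Y$ (not necessarily $Y^1$), and the common transition time $s$ depends on $n$ through the choice of that child. So the conclusion ``the chain is an initial segment of the maximal central cascade of $Y$'' and ``$\mathcal{N}_1(Y)\subset\{s,2s,\ldots,m(Y)s\}$'' are both false as stated; after all, $\mathcal{N}_1^o(Y)=\{s:Y_{-s}\text{ is a child of }Y\}\subset\mathcal{N}_1(Y)$, and nothing in your argument prevents $\mathcal{N}_1^o(Y)$ from being infinite. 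The missing ingredient is precisely Proposition~\ref{prop:persistentrec} (persistent recurrence): $Y$ has only finitely many children, so $Y_{i_1}$ ranges over a finite set, and for each choice the rest of the chain $Y_{i_1}\supset Y_{i_2}\supset\cdots$ is an initial segment of the maximal central cascade of $Y_{i_1}$, which is finite by non-renormalizability. Your closing remark that no input from \S\ref{sec:pre} is needed is thus not quite right — Proposition~\ref{prop:persistentrec} is essential, and the paper's proof uses it explicitly. The same omission propagates to your inductive step, where $Y_{i_{m(2)-1}}$ should be an interval in the cascade of some child $Y_{i_1}$ of $Y$, not of $Y$ itself.

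Once this is repaired, your base-case mechanism (finiteness of central cascades) is a valid alternative to the paper's, which instead uses $|Y_{-n}|\to 0$ from (\ref{eqn:t-n2zero}) together with the lower bound $|Y_{-n+s}|\ge|f^s(c)-c|$; both routes ultimately rest on the same two inputs, non-renormalizability and persistent recurrence. Your inductive reduction strips off the \emph{first} constant-transition-time block while the paper strips off the \emph{last} one, but these are mirror images of the same idea.
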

\begin{proof}
It suffices to prove that $\mathcal{N}_1(Y)$ is finite, since for each $n\in\mathcal{N}_M(Y)$ with $M\ge 2$, there exists $n'\in\mathcal{N}_{M-1}(Y)$ such that $n\in\mathcal{N}_1(Y_{-n'})$.

By Proposition~\ref{prop:persistentrec}, the set
$$\mathcal{N}_1^o(Y)=\{s: Y_{-s} \mbox{ is a child of } Y\}$$
is finite. For each $n\in \mathcal{N}_1(Y)\setminus \mathcal{N}_1^o(Y)$, there exists $s\in\mathcal{N}_1^o(Y)$ such that
$n-s\in \mathcal{N}_1(Y)$ and such that $Y_{-n}$ is a child of $Y_{-n+s}$. Since
$|Y_{-n+s}|\ge |f^{s}(c)-c|$ is bounded away from zero, by (\ref{eqn:t-n2zero}), $n-s$ is bounded from above.  It follows that $\mathcal{N}_1(Y)\setminus \mathcal{N}_1^o(Y)$, hence $\mathcal{N}_1(Y)$, is finite.
\end{proof}

In particular, for each symmetric nice interval $Y\subset I$,
$$\widehat{\xi}_M(Y)=\inf \{\xi(Y_{-n}): n\in \mathcal{N}_M(Y)\}>0.$$
\begin{lem} \label{lem:criticalpb}
Let $Y\ni c$ be a small symmetric nice interval, let $n\in\mathcal{N}_Y$ be such that $M_n(Y)\ge 3$,
and let $Y_{-n}$ be the critical pull back of $Y$ by $f^n$.
Let $K_1\supset K_2\supset \cdots$ be all the children of $Y_{-n}$. Then
for each $k\ge 2$, $K_k$ is $C\lambda_0^{-k}$-well inside $Y_{-n}$ and
\begin{equation}\label{xiKk}
\xi(K_k)\ge n^{-\beta} \widehat{\xi}_2(Y),
\end{equation}
where $C>0$, $\lambda_0\in (0,1)$ and $\beta>0$ are universal constants.
\end{lem}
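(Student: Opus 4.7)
Write the chain of critical pull-backs as $Y = Y_{i_0} \supset Y_{i_1} \supset \cdots \supset Y_{i_p} = Y_{-n}$ with transition times $s_1 \le \cdots \le s_p$, and let $m(1) = 1 < m(2) < \cdots < m(M)$ index the strict increases of $(s_j)$, where $M = M_n(Y) \ge 3$. Set $P_j := Y_{i_{m(j)-1}}$ for the head of the $j$-th layer, whose maximal central cascade $P_j = P_j^0 \supset P_j^1 \supset \cdots \supset P_j^{m_j} = P_{j+1}$ has common transition time $s_{m(j)}$, with the convention $P_{M+1} = Y_{-n}$.

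\textbf{First statement.} The plan is to apply Proposition~\ref{prop:sizeofchildren} to the layer-$M$ cascade at index $i = m_M$; conclusion (1) yields the size bound $|K_k| \le \lambda^{k-1}|Y_{-n}|$ for a universal $\lambda \in (0,1)$. Since $f$ is locally symmetric about $c$, every critical pull-back $Y_{-n}$ and each child $K_k$ is symmetric about $c$, so writing $Y_{-n} = (c-R,c+R)$ and $K_k = (c-r_k,c+r_k)$ with $r_k \le \lambda^{k-1} R$ shows that $K_k$ is $\tau$-well inside $Y_{-n}$ with $\tau \ge (\lambda^{-(k-1)} - 1)/2 \ge C\lambda_0^{-k}$ after choosing $\lambda_0 \in (\lambda,1)$. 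To invoke Proposition~\ref{prop:sizeofchildren} one must verify that $P_M^1$ is $\tau_0$-well inside $P_M$ for some universal $\tau_0 > 0$; this is where the hypothesis $M \ge 3$ is used, extracting the real bound from the preceding layers via Lemma~\ref{lem: boundsofchild}, Theorem~\ref{thm:realbounds}, and Theorem~\ref{thm:SV}.

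\textbf{Second statement.} To propagate empty-space lower bounds down the chain, I would begin from $\xi(P_3) \ge \widehat{\xi}_2(Y)$, which holds because $P_3 = Y_{i_{m(3)-1}}$ has essential order $2$ and hence lies in $\mathcal{N}_2(Y)$. For each $j = 3, \ldots, M$, the first child $P_{j+1}^1$ is an inheritor of the layer-$j$ cascade (taking $m' = m_j$), so once the niceness of $P_j$ is secured from the real bound of the first statement (via Lemma~\ref{lem:wellinsidenice}), Proposition~\ref{prop:central} yields $\xi(P_{j+1}^1) \ge C_1 \xi(P_j)$. A complementary application of Proposition~\ref{prop:emptyspace}, combined with the reduction to the $T^1$-cascade at the end of the proof of Proposition~\ref{prop:central}, allows one to iterate this bound across layers with a universally controlled multiplicative loss at each step. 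Finally $K_k$ is itself an inheritor of the layer-$M$ cascade, so a last application of Proposition~\ref{prop:central} gives $\xi(K_k) \ge C_2 \xi(P_M)$. Combining the $M-2$ factors and invoking $M \le C_0 \log n$ from Proposition~\ref{prop:order} converts the accumulated geometric factor $C^{M-2}$ into the polynomial $n^{-\beta}$.

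\textbf{Main obstacle.} The delicate point throughout will be the real-bound input: $P_M^1$ (and more generally each $P_j^1$ for $j \ge 2$) is the \emph{first} child of its parent and is therefore not covered by Lemma~\ref{lem: boundsofchild}. In the non-central case one lands in Theorem~\ref{thm:realbounds}(i)--(ii) directly; in the central case one must instead extract the required well-insideness from Theorem~\ref{thm:realbounds}(iii), using the fact that the common return time strictly increases at each layer boundary. This combinatorial jump is guaranteed by $M \ge 3$, and the same real-bound analysis supplies the niceness of each $P_j$ needed for the $\xi$-iteration in the second statement.
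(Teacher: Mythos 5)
Your overall strategy — split the critical chain into the $M=M_n(Y)$ essential layers marked by the jumps of the transition-time sequence, secure a uniform real bound at each layer boundary, propagate the empty-space estimate one multiplicative factor per layer via Propositions~\ref{prop:emptyspace} and~\ref{prop:central}, and then invoke Proposition~\ref{prop:order} to trade $\kappa^{M}$ for $n^{-\beta}$ — is the same as the paper's. Your treatment of the first statement by combining the size estimate of Proposition~\ref{prop:sizeofchildren}(1) with the symmetry of all critical pull-backs about $c$ is a clean shortcut compared to the paper's direct appeal to Proposition~\ref{prop:sizeofchildren}(2) (or Lemmas~\ref{lem:largebounds} and~\ref{lem:wellinsidenice}); both give what is needed, since well-insideness for a child implies niceness by Lemma~\ref{lem:wellinsidenice}.

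The genuine gap is in the iteration for (\ref{xiKk}). Your proposed step is $\xi(P_{j+1}^1)\ge C_1\xi(P_j)$, obtained by viewing $P_{j+1}^1$ as an inheritor of the layer-$j$ cascade starting at $P_j$. But the next step would need a lower bound on $\xi(P_{j+1})$, and nothing connects $\xi(P_{j+1})$ to $\xi(P_{j+1}^1)$: the inclusion $P_{j+1}^1\subset P_{j+1}$ is useless since $\xi$ is not monotone, and Proposition~\ref{prop:emptyspace} only bounds $\xi$ of a \emph{child} from below by $\xi$ of its parent, i.e.\ in the wrong direction. So the $M-2$ factors you want to multiply do not actually concatenate. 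The paper sidesteps this by re-indexing: it iterates over $T_{m(j)}=Y_{i_{m(j)}}$ (your $P_j^1$, not your $P_j$), showing $\xi(T_{m(j)})\ge\kappa\,\xi(T_{m(j-1)})$ for $3\le j\le M$ — when $m(j)=m(j-1)+1$ this is Proposition~\ref{prop:emptyspace} applied to the child $T_{m(j)}$ of the $\tau$-nice interval $T_{m(j-1)}$, and when $m(j)>m(j-1)+1$ it is Proposition~\ref{prop:central} applied with $T_{m(j)}$ as inheritor of $T_{m(j-1)}$. Each link of this chain has the \emph{same} interval on both sides of consecutive inequalities, so it closes. You should therefore replace $P_j$ by $P_j^1=T_{m(j)}$ as your iterate, starting from $\xi(T_{m(2)})\ge\widehat{\xi}_2(Y)$.

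A smaller related issue: your identification of the layer-$j$ cascade as $P_j=P_j^0\supset P_j^1\supset\cdots\supset P_j^{m_j}=P_{j+1}$ with common transition time $s_{m(j)}$ silently assumes that $s_{m(j)}$ is the first return time of $c$ to $P_j=Y_{i_{m(j)-1}}$, which does not follow — Lemma~\ref{lem:childtime} only gives the lower bound $s_{m(j)-1}$, and the chain may skip a shallower critical pull-back of $Y$. The paper's cascade runs instead from $T_{m(j-1)}=Y_{i_{m(j-1)}}$ with common return time $s_{m(j-1)}$, and there the first-return claim does follow from Lemma~\ref{lem:childtime} because $s_{m(j-1)}=s_{m(j-1)+1}$; the case where $T_{m(j)}$ fails to be the first child of $T_{m(j)-1}$ is then handled separately via Lemma~\ref{lem: boundsofchild} in the real-bound step.
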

\begin{proof}
Let $\{Y_i\}_{i=-n}^0$ be the chain with $Y_0=Y$ and define $m(0), m(1), \ldots$ as above. Define
$T_j=Y_{i_j}$ for $0\le j \le p$. Note that $T_{m(2)}$ is of the form $Y_{-n'}$ for some $n'\in \mathcal{N}_2(Y)$, so
$$\xi(T_{m(2)})\ge \widehat{\xi}_2(Y).$$

Let us first prove that there exists a universal constant $\tau>0$ such that for each $2\le j\le M$, $T_{m(j)}$ is a $\tau$-nice interval.
If either $T_{m(j)}$ is well inside $T_{m(j)-1}$ or
$T_{m(j)-1}$ is well inside $T_{m(j)-2}$ then by Lemma~\ref{lem:wellinsidenice} we are done. Thus, by Lemma~\ref{lem: boundsofchild}, we may assume that $T_{m(j)}$ is the first child of $T_{m(j)-1}$ and that $T_{m(j)-1}$ is the first child of $T_{m(j)-2}$, i.e.
$s_{m(j)}$ is the first return time of $c$ into $T_{m(j)-1}$ and $s_{m(j)-1}$ is the first return time of $c$ into $T_{m(j)-2}$. Since $s_{m(j)}> s_{m(j)-1}$, it follows that $R_{T_{m(j)-2}}: T_{m(j)-1} \to T_{m(j)-2}$ is non-central. By Theorem~\ref{thm:realbounds}, it follows that $T_{m(j)}$ is uniformly well inside $T_{m(j)-1}$ and thus we are done.

Now let us show that there exists $\kappa\in (0,1)$
such that $\xi(T_{m(j)})\ge \kappa\xi(T_{m(j-1)})$ for each $3\le j\le M$. Indeed, by Proposition~\ref{prop:emptyspace}, such an estimate holds if $m(j)= m(j-1)+1$. So assume $m(j)> m(j-1)+1$. By Lemma~\ref{lem:childtime}, it follows that $T_{m(j-1)}\supset T_{m(j-1)+1}\supset \cdots \supset T_{m(j)-1}$ is a central cascade, i.e., $s_k=s_{m(j-1)}$ is the first return time to $T_{k-1}$ for each $m(j-1)<k\le m(j)-1$.
Since $s_{m(j)}> s_{m(j-1)}$, $T_{m(j)}$ is an inheritor of $T_{m(j-1)}$.
So by Proposition~\ref{prop:central}, the statement follows.

Similarly for each $k\ge 2$, $K_k$ is either a child or an inheritor of $T_{m(M)}$, so $\xi(K_k)\ge \kappa \xi(T_{m(M)})$. Thus
$$\xi(K_k)\ge \kappa^{M-1} \xi(T_{m(2)})\ge \kappa^{M-1} \widehat{\xi}_2(Y).$$
By Proposition~\ref{prop:order}, the statement follows.

If $Y_{-n}=T_{m(M)}$, then by Lemma~\ref{lem:largebounds}, the children of $Y_{-n}$ are well nested, so the niceness of $K_k$ follows from Lemma~\ref{lem:wellinsidenice}.  If $Y_{-n}\subsetneq T_{m(M)}$, then for each $k\ge 2$, the same conclusion follows from Proposition~\ref{prop:sizeofchildren}. 
\end{proof}

\begin{proof}[Proof of the Reduced Main Theorem]
By (\ref{eqn:t-n2zero}) and Lemma~\ref{lem:finite}, we may assume that $Y$ is small so that Lemma~\ref{lem:criticalpb} applies.

Let $n\in \mathcal N_Y$ be so large that $M_n(Y)\ge 3$ and $Y_0$ be the critical pull back of $Y$ under $f^n$. Assume the number $N$ of children of $Y_0$ is at least $2$ and let $K_N$ denote the $N$-th child of $Y_0$. Let $L$ be the first child of $K_N$.
Then by Lemma~\ref{lem:criticalpb}, $\xi(K_N)\ge n^{-\beta} \widehat{\xi}_2(Y)$ and
$K_N$ is a $C\lambda_0^{-N}$-nice interval, where $C>0$ and $\lambda_0\in (0,1)$ are universal constants.
So by Proposition~\ref{prop:emptyspace}, we have
$$\xi(L)\ge \frac{1-C_0\lambda_0^{N/\ell}}{\xi(K_N)+C_0\lambda_0^{N/\ell}}\xi(K_N)\ge \frac{1-C_0\lambda_0^{N/\ell}}{\widehat{\xi}_2(Y)+C_0n^\beta \lambda_0^{N/\ell}} \widehat{\xi}_2(Y),$$
where $C_0>0$ is a constant. 
On the other hand, by Lemma~\ref{lem:zeroemptyspace}, when $n$ is large enough, we have $\xi(L)\le \widehat{\xi}_2(Y)/2$. Since $\widehat{\xi}_2(Y)\le 1$, it follows that $N=O (\log n)$.
\end{proof}

We end this section with the following theorem.
\begin{thm}\label{thm:lower} Let $f\in\mathcal{A}_*$ be a non-renormalizable unimodal map with a non-periodic recurrent critical point $c$ and such that $\omega(c)$ is minimal. Let $Y$ be a symmetric nice interval, let $\mathcal{Y}$ denote the collection of components of $\dom(Y)\cup Y$ which intersects $\omega(c)$ and let $q(\mathcal{Y},n)$ be defined as in (\ref{eqn:qclY}).
Then 
$$q(\mathcal{Y}, n)\ge \kappa_0 n$$
holds for all $n$ large enough,
where $\kappa_0>0$ is a universal constant.
\end{thm}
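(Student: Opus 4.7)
The plan is to show $q(\mathcal{Y},n)\ge \kappa_0 n$ for large $n$ by exhibiting at least linearly many distinct components of $f^{-n}(W)$ meeting $\omega(c)$, where $W=Y\cup \dom(Y)$. The main engine will be Proposition~\ref{prop:order} together with the central cascade analysis of Section~\ref{sec:pre}.

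First I would take $n_0\in\mathcal N_Y$ maximal with $n_0\le n$, and apply the decomposition from the proof of Proposition~\ref{prop:order} to the chain $\{Y_i\}_{i=-n_0}^0$. This yields $n_0=\sum_{j=1}^{M'}S_j$ with $S_j=s_{m(j)}L_j$, $L_j=m(j+1)-m(j)$, and the Fibonacci-like recursion $S_{j+1}\ge S_j+S_{j-2}$, which forces the largest term to satisfy $S_{M'}\ge c_1 n_0\ge c_1' n$ for a universal $c_1'>0$. Each $S_j$ corresponds to a maximal central cascade $T_j\supset T_j^1\supset\cdots\supset T_j^{L_j}$ with common return time $s_{m(j)}$.

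The main step will be to show that a maximal central cascade of length $L$ and return time $s$ contributes roughly $L$ distinct components to $q(\mathcal Y,n)$. Inside such a cascade $T\supset T^1\supset\cdots\supset T^L$, maximality forces $R_T(c)\notin T^L$, and by Theorem~\ref{thm:realbounds} there exists a non-central return domain $U\ne T^1$ of $T$ which is $\rho$-well inside $T$. For each $0\le k<L$ the diffeomorphic pull back $U_k$ of $U$ by $f^{ks}$ sits in $T^k\setminus T^{k+1}$, meets $\omega(c)$, and is characterized by its central dwelling time $k$ in the cascade. Pulling $U_k$ back diffeomorphically along the outer chain from $T$ back to $Y$ and then out to level $n$ produces a component of $f^{-n}(W)\cap\omega(c)$, and the dwelling time $k$, read off from the itinerary under $\mathcal Y$, distinguishes different $k$'s.

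Summing the cascade contributions and trading off $L_j$ against $s_{m(j)}$ using $\sum_j S_j=n_0$ together with the dominance $S_{M'}\ge c_1' n$, one obtains at least $\kappa_0 n$ distinct components, whence $q(\mathcal Y,n)\ge \kappa_0 n$. The hard part will be making the distinctness claim rigorous: the pull backs of $U_k$ along the outer chain must be shown to remain in disjoint components of $f^{-n}(W)$, which will follow from careful tracking of the niceness of the intermediate intervals and Koebe distortion control via Theorem~\ref{thm:koebe}.
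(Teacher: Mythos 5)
Your proposal takes a genuinely different route from the paper, but it has a fatal arithmetic gap plus some secondary issues.

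The central difficulty is that your count is bounded by the number of \emph{critical levels} in the chain, not by $n$ itself. Writing $n_0=\sum_j S_j$ with $S_j=s_{m(j)}L_j$, your strategy produces roughly $L_j$ components per cascade, hence about $\sum_j L_j = p'$ components in total, where $p'$ is the number of times the chain $\{Y_i\}_{i=-n_0}^0$ passes through $c$. But $p'$ can be as small as $O(\log n)$: if every return in the chain is non-central (all $L_j=1$), then $p'=M'$, and Proposition~\ref{prop:order} itself shows $M'=O(\log n)$ via the Fibonacci recursion. The phrase ``trading off $L_j$ against $s_{m(j)}$'' gestures at recovering the missing factor, but no mechanism is given for turning a large return time $s_{m(j)}$ into extra components, and without one the total stays sublinear. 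Two further gaps: Theorem~\ref{thm:realbounds} does \emph{not} assert that $T$ has a non-central return domain $U\ne T^1$ that is $\rho$-well inside $T$ (it bounds the geometry of the central nest, not of arbitrary non-central domains), and it is not established that your pull-backs $U_k\subset T^k\setminus T^{k+1}$ by $f^{ks}$ meet $\omega(c)$ for every $0\le k<L$; the first pass of the critical orbit through the cascade only visits $T^{L-1}\setminus T^L$, so meeting $\omega(c)$ at intermediate levels needs a separate argument.

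The paper's proof avoids all of this. It sets $T_i$ to be the component of $f^{-i}(\dom(Y)\cup Y)$ containing $c$, then uses the second assertion of Proposition~\ref{prop:order} (the ``$\kappa$'' bound) to pick $m$ with $n-m\ge\kappa_0 n$ such that the second child of $T_m$ has transition time $>n-m$. For \emph{every} $i\in[m,n)$ --- not just cascade boundaries --- it chooses a child of $T_i$ with transition time $r_i>n-i$ and takes $J_i$ to be the diffeomorphic pull-back of $T_i$ by $f^{n-i}$ containing $f^{i+r_i-n}(c)$. Distinctness is then immediate and purely topological: if $J_i=J_{i'}$ with $i<i'$, then $f^{n-i'}|J_i$ is a diffeomorphism onto $T_{i'}\ni c$, so $f^{n-i}|J_i=f^{i'-i}\circ f^{n-i'}|J_i$ would force $f^{i'-i}$ to be a diffeomorphism on a neighborhood of $c$, which is impossible. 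This gives $n-m\ge\kappa_0 n$ distinct components with no Koebe or cascade geometry at all. If you want a linear lower bound, you need to produce one component per time step, not one per critical level; that is the idea your proposal is missing.
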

\begin{proof} Let $N$ denote the maximal entry time of a point in $Y\cap\omega(c)$ into $Y$ and
for each $n\ge 0$, let $T_n$ denote the connected component of $f^{-n}(\dom(Y)\cup Y)$ which contains $c$.
Clearly, $T_0\supset T_1\supset \cdots$.

Fix a large positive integer $n$, and let $m_0$ be minimal positive integer such that $n-m_0<\kappa m_0,$ where $\kappa>0$ is as in Proposition~\ref{prop:order}. Then
$n-(m_0-1)\ge \kappa (m_0-1)$, and hence
$$n-m_0\ge \kappa m_0 -(1+\kappa)\ge \frac{\kappa n}{1+\kappa}-1-\kappa.$$
Let $m$ be the minimal integer such that $m\ge m_0$ and $f^{m}(c)\in Y$. Then $m<m_0+N$, 
so
$$n-m> \kappa_0 n,$$
provided that $n$ is large enough, where $0<\kappa_0<\kappa/(1+\kappa)$. By Proposition~\ref{prop:order}, the second child of $T_{m}$ has transition time greater than $\kappa m>n-m$.

Note that for any symmetric nice intervals $I\supset I'$, if $J$ is a child of $I$ with transition time $r$, then $I'$ has a child $J'$ with transition time at least $r$ such that $J'\subset J$. Indeed, if $s\ge 0$ is the minimal integer such that $f^{s+r}(c)\in I'$, then $I'$ has a child with transition time $s+r$.   Thus for each $m\le i< n$,
there exists $r_i$ such that
 \begin{itemize}
 \item $T_{m+r_m}$ is the second child of $T_m$;
 \item $T_{i+r_{i}}$ is a child of $T_i$, $m\le i<n$;
 \item $r_m\le r_{m+1}\le \cdots\le r_{n-1}$. 
 \end{itemize}
Therefore
$$n-1+r_{n-1}> n-2+r_{n-2}>\cdots> m+r_m>n.$$
For each $m\le i<n$, let $J_i$ denote the pull back of $T_i$ by $f^{n-i}$ which contains $f^{i+r_i-n}(c)$. Then each $J_i$ is a component of $f^{-n}(Y\cup \dom(Y))$ intersecting $\omega(c)$. Note that $f^{n-i}$ maps $J_i$ diffeomorphically onto $T_i\ni c$. It follows that
for $m\le i<i'<n$, $J_i\cap J_{i'}=\emptyset$, for otherwise, $J_i=J_{i'}$ is mapped onto $T_{i'}$ diffeomorphically by $f^{n-i'}$ and mapped onto $T_i$ diffeomorphically by $f^{n-i}$ which is absurd.   Therefore, $q_n(\mathcal{Y})\ge n-m\ge \kappa_0 n.$
\end{proof}

Let us say that a map $f\in\mathcal{A}_*$ has {\em special combinatorics} if $\omega(c)\ni c$ is minimal and there exists a symmetric nice interval $Y$ such that for each $n=0,1,\ldots$,
$Y_n\setminus Y_{n+1}$ has exactly one component intersecting $\omega(c)$, where $Y_0=Y$ and $Y_{n+1}$ is the return domain of $Y_{n}$ which contains $c$.
Such a map is necessarily non-renormalizable.

\begin{cor}\label{cor:special} Suppose that $f\in\mathcal{A}_*$ has special combinatorics. Then for any small open cover $\mathcal{U}$ of the Cantor set $\omega(c)$, the topological complexity function satisfies
$$p(\mathcal{U}, n+1) \ge \kappa_0 n \text{ for all } n\text{ large enough.}$$
\end{cor}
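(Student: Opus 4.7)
The plan is to reduce to a nice cover, apply Theorem~\ref{thm:lower}, and then promote the lower bound on $q(\mathcal{Y},n)$ to a lower bound on $p(\mathcal{Y},n+1)$ by an injectivity argument that uses the rigidity imposed by special combinatorics. Fix a symmetric nice interval $Y$ witnessing special combinatorics; since the property descends to every $Y_k$ in the principal nest, $Y$ may be taken arbitrarily small. Let $\mathcal{Y}$ be the corresponding nice cover of $\omega(c)$. By compactness of $\omega(c)$, $\mathcal{Y}$ is finite, and since its elements are pairwise disjoint (being distinct components of $Y\cup\dom(Y)$), $\mathcal{Y}$ is in fact a partition of $\omega(c)$. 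Consequently every $x\in\omega(c)$ has a unique $(n+1)$-itinerary $\Pi_n(x)\in\mathcal{Y}^{n+1}$ and $p(\mathcal{Y},n+1)=|\Pi_n(\omega(c))|$. Any open cover $\mathcal{U}$ whose elements have diameter smaller than the Lebesgue number of $\mathcal{Y}$ on $\omega(c)$ satisfies $p(\mathcal{U},n+1)\ge p(\mathcal{Y},n+1)$, so the task reduces to showing $|\Pi_n(\omega(c))|\ge \kappa_0 n$.

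By Theorem~\ref{thm:lower} and its proof, there exist $\kappa_0 n$ pairwise disjoint components $J_m,J_{m+1},\dots,J_{n-1}$ of $f^{-n}(Y\cup\dom(Y))$ intersecting $\omega(c)$, with each $J_i$ containing the point $p_i:=f^{i+r_i-n}(c)\in\omega(c)$. Arguing as in the proof of Lemma~\ref{lem:pq}, for each $i$ the connected image $f^j(J_i)$ lies in a single component of $Y\cup\dom(Y)$ for every $0\le j\le n$, so all points of $J_i\cap\omega(c)$ share a common itinerary, which we denote $\Pi_n(J_i)$. It therefore suffices to exhibit pairwise distinctness of the itineraries $\Pi_n(J_m),\dots,\Pi_n(J_{n-1})$.

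For the injectivity step, observe that at time $j=n-i$ we have $f^{n-i}(J_i)=T_i\subset Y$, so the symbol $U_{n-i}(J_i)$ is the central element $Y\in\mathcal{Y}$. Under special combinatorics, $\omega(c)\cap(Y_k\setminus Y_{k+1})$ lies in a single active noncentral return domain at every level $k$ of the principal nest, which produces a rigid coding of the critical orbit. Combined with the Fibonacci-like growth of $\{r_i\}$ recorded in~(\ref{eqn:recsm}), this rigidity ensures that the pattern of visits to $Y$ and to the finitely many entry domains constituting $\mathcal{Y}\setminus\{Y\}$ along the length-$(n+1)$ critical orbit segment starting at $f^{i+r_i-n}(c)$ uniquely determines the starting index, so that the itineraries $\Pi_n(J_i)$ are pairwise distinct. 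The main obstacle will be to rigorously translate the rigidity of the principal-nest coding, which is a priori finer than the $\mathcal{Y}$-coding, into an injectivity statement for the coarser $\mathcal{Y}$-itineraries: one must show, using the single-active-branch hypothesis, that no two orbit segments of length $n+1$ among the chosen ones collapse under the $\mathcal{Y}$-coding. This is precisely where special combinatorics is essential, as multiple active noncentral return domains (as permitted in the general non-renormalizable case) could produce coincident codings and break the argument.
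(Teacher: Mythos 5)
Your reduction is correct and essentially parallels the paper's: reduce to the nice cover $\mathcal{Y}$ associated with a symmetric nice interval $Y$ witnessing special combinatorics, observe that $\mathcal{Y}$ partitions $\omega(c)$ so that $p(\mathcal{Y},n+1)$ counts distinct $(n+1)$-itineraries, note that any sufficiently small $\mathcal{U}$ refines $\mathcal{Y}$ and hence $p(\mathcal{U},n+1)\ge p(\mathcal{Y},n+1)$, and invoke Theorem~\ref{thm:lower} to produce $\ge\kappa_0 n$ pairwise disjoint components $J_i$ of $f^{-n}(Y\cup\dom(Y))$ meeting $\omega(c)$. The issue is the step you yourself flag as the ``main obstacle'': you never actually prove that the $\mathcal{Y}$-itineraries of the $J_i$ are pairwise distinct, and the ingredients you gesture at (the Fibonacci-like growth in~(\ref{eqn:recsm}), ``rigidity of the principal-nest coding'') do not obviously yield an injectivity argument. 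As written, the proof is incomplete.

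What is missing is the symmetry mechanism the paper uses, and it is worth seeing why your hand-waved ``rigidity'' cannot be a routine bookkeeping argument. The paper's key lemma is this: under special combinatorics, if $J$ is a pull back of $Y$ intersecting $\omega(c)$ and $J\not\ni c$, then its mirror image $\hJ$ across $c$ (the interval with $f(\hJ)=f(J)$ on the other side of $c$) is disjoint from $\omega(c)$. This follows directly from the hypothesis that $Y_n\setminus Y_{n+1}$ has exactly one component meeting $\omega(c)$, since $J$ and $\hJ$ lie in the two symmetric components of $Y_n\setminus Y_{n+1}$ for the appropriate $n$. With that lemma in hand, the paper shows $q(\mathcal{Y},n)=p(\mathcal{Y},n+1)$ for every $n$: if some element $K$ of $\bigvee_{j=0}^n f^{-j}\mathcal{Y}$ had two components meeting $\omega(c)$, take $n$ minimal; pushing forward by $f$ and using minimality forces those two components to be symmetric pull backs of $Y$ around $c$, contradicting the lemma. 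In your framework this is precisely the injectivity statement you need: two of the $J_i$'s with the same $\mathcal{Y}$-itinerary would be two components of a single element of the join meeting $\omega(c)$. You were right that this is where special combinatorics enters, but the concrete reason is this reflection lemma, not the growth rate of transition times. Add the reflection lemma and the minimality/symmetry argument, and your proof closes.
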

\begin{proof} Let $Y$ be a symmetric nice interval as in the definition above. We first show that if $J$ is a pull back of $Y$ which intersects $\omega(c)$ and $J\not\ni c$, then $\hJ\cap\omega(c)=\emptyset$, where $\hJ$ is the interval lying on different side of $c$ as $J$ and with $f(\hJ)=f(J)$.
Indeed, let $n\ge 0$ be maximal such that $J\subset Y_n$. So $J\not\subset Y_{n+1}$. As both $Y_{n+1}$ and $J$ are pull backs of $Y_0$ and $J\not\ni c$, we have $J\cap Y_{n+1}=\emptyset$. Since only one component of $Y_n\setminus Y_{n+1}$ intersects $\omega(c)$, it follows that $\hJ\cap\omega(c)=\emptyset$.

Let $\mathcal{Y}$ denote the nice cover associated with $Y$. Let us show that $q(\mathcal{Y},n)=p(\mathcal{Y}, n+1)$ for each $n\ge 0$. To this end, it suffices to show that each element of $\bigvee_{j=0}^n f^{-j}\mathcal{Y}$ has at most one component intersecting $\omega(c)$. Arguing by contradiction, assume that there exists $n\ge 0$ such that some element $K$ of $\bigvee_{j=0}^n f^{-j}\mathcal{Y}$  has at least two components intersecting $\omega(c)$. If $n$ is minimal with the last property, then $K$ has exactly two components intersecting $\omega(c)$ which are symmetric around $c$, which is impossible by what we proved in the previous paragraph.

By Theorem~\ref{thm:lower}, it follows that $p(\mathcal{Y},n+1)\ge \kappa_0 n$ for all $n$ large enough. For each open cover $\mathcal{U}$ which is a refinement of $\mathcal{Y}$, $p(\mathcal{U},n+1)\ge p(\mathcal{Y},n+1)\ge \kappa_0 n$.
\end{proof}
\begin{remark}\label{rem:Fibonacci} The well-studied Fibonacci unimodal maps have special combinatorics. See for example  \cite[Section 6]{LM}.
\end{remark}
\section{Appendix: A wild adding machine}\label{sec:appendix}
\begin{thm}\label{thm:ec} There exists a unimodal map $f\in \mathcal{A}_*$ which has a wild attractor $\omega(c)$ such that $f:\omega(c)\to\omega(c)$ is topologically conjugate to an adding machine and hence equicontinuous.
\end{thm}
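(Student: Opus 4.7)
The plan is to construct $f$ by prescribing its combinatorics via a kneading map $Q$ in the sense of Hofbauer, following the strategy of \cite{BKM}. I would choose $Q$ together with a critical order $\ell > 1$ so that three requirements hold simultaneously: \emph{(a)} $Q$ is admissible and hence realized by some $f \in \mathcal{A}_*$ with critical order $\ell$, via standard monotonicity of the kneading invariant in a one-parameter family of unimodal maps; \emph{(b)} Bruin's quantitative condition \cite[Section 6]{B} holds for $(Q,\ell)$, so that $f$ has a Cantor attractor $\omega(c)$ (which will be wild since $Q$ will be arranged so that $f$ is non-renormalizable); and \emph{(c)} the combinatorial structure of $Q$ produces a nested sequence of cycles of intervals $\{J_n, f(J_n), \ldots, f^{p_n-1}(J_n)\}$ with $J_n \ni c$, the $f^i(J_n)$ pairwise disjoint, $\omega(c) \subset \bigcup_{i=0}^{p_n-1} f^i(J_n)$, and period divisibility $p_n \mid p_{n+1}$ with $p_n \to \infty$. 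Crucially, unlike in the infinitely renormalizable case, I do \emph{not} impose $f^{p_n}(J_n) \subset J_n$; only the weaker containment $f^{p_n}(J_n \cap \omega(c)) \subset J_n$, which is compatible with non-renormalizability.

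Granted such a construction, the adding machine conjugacy follows by a standard inverse-limit argument. For each $n$, the cycle at level $n$ induces a continuous surjection $\omega(c) \to \mathbb{Z}/p_n\mathbb{Z}$ that intertwines $f|\omega(c)$ with translation by $1$; the divisibility $p_n \mid p_{n+1}$ makes these surjections compatible under the natural quotient maps; passing to the inverse limit yields a continuous equivariant map $\pi : \omega(c) \to \varprojlim_n \mathbb{Z}/p_n\mathbb{Z}$. Injectivity of $\pi$ holds because the diameters of the level-$n$ pieces $f^i(J_n)$ tend to $0$ (no wandering intervals, \cite[Chapter IV]{MS}), and surjectivity follows from minimality of $\omega(c)$. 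This gives a topological conjugacy between $f|\omega(c)$ and the odometer, which is equicontinuous.

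The main obstacle is to build one $Q$ satisfying (a), (b), and (c) simultaneously: Bruin's criterion requires enough ``complexity'' in $Q$ (in particular, sufficiently fast growth of the cutting times $s_k$, for some large enough $\ell$), while the rigid integer divisibility $p_n \mid p_{n+1}$ forces $Q$ into a very regular pattern. Following the scheme of \cite{BKM}, I would design $Q$ as a sequence of long pure central-cascade blocks of prescribed integer lengths $m_n$ separated by single non-central returns, chosen so that the resulting cycle periods satisfy $p_{n+1} = m_n p_n$ with $m_n \ge 2$; then take $m_n$ to grow fast enough (and $\ell$ large enough) to ensure Bruin's inequality. Admissibility of the resulting $Q$ is a combinatorial check handled by the theory in \cite{B}, and the cycles of intervals in (c) are then precisely the forward $f$-iterates of a suitable symmetric nice interval $T_n \ni c$ with first return time $p_n$ whose existence is forced by the central-cascade pattern built into $Q$.
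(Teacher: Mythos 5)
Your high-level strategy -- prescribe combinatorics via the kneading map, invoke Bruin's criterion \cite[Section 6]{B} to get a wild attractor for large $\ell$, and exhibit a nested sequence of cyclic covers of $\omega(c)$ with periods $p_n\mid p_{n+1}$ to pass to an adding machine -- is the same as the paper's. But the specific combinatorial model you propose is genuinely different, and it has a gap that the paper's model is designed to avoid.

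You posit a single cycle of pairwise disjoint intervals $J_n, f(J_n),\dots,f^{p_n-1}(J_n)$ covering $\omega(c)$ with $f^{p_n}(J_n\cap\omega(c))\subset J_n$. For such a cycle to exist with $J_n\ni c$ a nice interval, \emph{every} return domain of $J_n$ meeting $\omega(c)$ must have the \emph{same} first return time $p_n$; otherwise some iterate $f^i(J_n)$ with $0<i<p_n$ would meet $J_n$ and (by niceness) contain it, destroying pairwise disjointness, or some piece of $\omega(c)$ would re-enter $J_n$ before time $p_n$. The paper's construction does \emph{not} have this property and is not supposed to: in Lemma~\ref{lem:eqvcon} the nice interval $T_n$ has three relevant return domains $T_n',Q_n,\widehat{Q}_n$ with \emph{different} return times $k_n$ and $l_n$ (concretely $k_n=r_{2n}$, $l_n=t_{2n}$, which differ). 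The cyclic cover of length $k_n+l_n$ is assembled by shuttling $\omega(c)$ between $T_n'$ and $Q_n\cup\widehat{Q}_n$, and the crucial trick is the symmetry $f(Q_n)=f(\widehat{Q}_n)$: the ``bad'' level-set $(Q_n\cup\widehat{Q}_n)\cap\omega(c)$ lies in two intervals, but its image under $f$ is again in a single interval, so all other members of the cover are connected. This is precisely what replaces your single-interval cycle, and it relaxes the rigid ``constant return time'' requirement that your picture imposes.

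The second gap is in the proposed kneading map. You suggest ``long pure central-cascade blocks of prescribed length $m_n$ separated by single non-central returns,'' with $m_n$ growing. This is essentially the \cite{BKM} recipe, and the paper explicitly points out that the \cite{BKM} examples are \emph{not} persistently recurrent, hence cannot have a wild attractor. The paper's map has a very different kneading map: Lemma~\ref{lem:bruincond} computes that $Q$ is eventually periodic of period $3$ with $k-Q(k)\in\{2,3,5\}$, i.e.\ a Fibonacci-like pattern with bounded $k-Q(k)$ and bounded ``cascade'' length, and the resulting periods are $p_n=5\cdot 3^n$, so $p_{n+1}/p_n\equiv 3$ (you do not need $m_n\to\infty$, and growing $m_n$ does not help Bruin's inequality, which is a local constraint on $Q$). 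The actual work of the paper is Proposition~\ref{prop:map}, which realizes the combinatorics (1)--(6) by a limit/full-family argument, and Lemmas 5.6--5.7 which translate those combinatorics into the kneading map and check Bruin's hypotheses. You correctly identify this as ``the main obstacle'' but the sketch you give for overcoming it describes a structure (constant-return-time single-interval cycles, long central cascades) that would push the map towards renormalizability or non-persistent recurrence rather than towards a wild attractor, and does not match the structure that actually makes the proof go through.
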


Following~\cite{BS}, we define an adding machine as follows.
Let $\alpha = (p_1, p_2, \ldots)$ be a sequence of
integers where each $p_i \ge 2$. Let $\Delta_\alpha$ denote the set of all sequences $(x_1, x_2, \ldots)$, where
$x_i\in {0, 1, \ldots, p_i -1}$ for each $i.$ We use the product topology on $\Delta_\alpha$.
For each $\alpha$,  an adding machine map $f_\alpha: \Delta_\alpha\to\Delta_\alpha$ is defined as:
$$f_\alpha (x_1, x_2, \cdots)=\left\{
\begin{array}{ll}
(\overbrace{0,0,\ldots, 0}^{l-1}, x_l+1, x_{l+1},\cdots) & \mbox{ if } x_i=p_i-1\mbox{ for } i<l\\
& \,\,\,\,\mbox{ and } x_l<p_l-1\\
(0,0,0,\cdots) & \mbox{ if } x_i=p_i-1 \mbox{ for all }i.
\end{array}
\right.
$$
It is clear that $f_\alpha:\Delta_\alpha\to\Delta_\alpha$ is minimal and equicontinuous. It is well known that for an infinitely renormalizable map $f\in \mathcal{A}_*$, $f:\omega(c)\to\omega(c)$ is topologically conjugate to an adding machine, see \cite[Proposition III.4.5]{MS}. (The definition of an adding machine there is slightly different, but equivalent to the one above.)

In~\cite{BKM} the authors constructed uncountably many non-renormalizable unimodal maps such that $f:\omega(c)\to\omega(c)$ is topologically conjugate to a (generalized) adding machine, hence equi-continuous. It seems that their construction only gives non-persistent recurrent maps. To obtain a equi-continuous wild attractor, we shall modify their construction to obtain a unimodal map with a wild attractor.

We start with the following lemma which gives a sufficient condition for a non-renormalizable unimodal map for which $f:\omega(c)\to\omega(c)$ is topologically conjugate to an adding machine.
\begin{lem}\label{lem:eqvcon}
Let $f\in\mathcal{A}_*$ be a unimodal map with a recurrent critical point $c$.
Assume that for each $n=1,2,\ldots$, there exists a nice interval $T_n\ni c$ together with three distinct return domains $T_n', Q_n, \hQ_n$ such that 
\begin{enumerate}
\item[(i)] $|T_n|\to 0$;
\item[(ii)] $T_n'\ni c$ and $f(Q_n)=f(\hQ_n)$;
\item[(iii)] for each $x\in T_n'\cap\omega(c)$, $R_{T_n}(x)\in Q_n\cup \hQ_n$;
\item[(iv)] for each $x\in (Q_n\cup \hQ_n)\cap\omega(c)$, $R_{T_n}(x)\in T_n'$.
\end{enumerate}
Then $f:\omega(c)\to\omega(c)$ is topologically conjugate to an adding machine.
\end{lem}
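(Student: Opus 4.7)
The plan is to realize $f|\omega(c)$ as the inverse limit of cyclic permutations on a nested sequence of clopen partitions of $\omega(c)$, built from Kakutani--Rokhlin towers whose bases shrink to $\{c\}$. First, I would pass to a subsequence so that $T_1\supset T_2\supset\cdots$, which is possible because two nice intervals containing $c$ must be nested. Setting $A_n:=T_n'\cap\omega(c)$, the hypotheses (together with minimality of $\omega(c)$, and the fact that $\partial T_n'\cap\omega(c)=\emptyset$ for nice $T_n$) give a decreasing sequence of clopen subsets of $\omega(c)$ with $\bigcap_n A_n=\{c\}$ by (i). From (ii), $Q_n$ and $\hQ_n$ share a common first return time $r_n$ to $T_n$; let $s_n$ denote that of $T_n'$. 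Conditions (iii) and (iv) force $R_{T_n}$ on $\omega(c)\cap T_n$ to cycle between $T_n'$ and $Q_n\cup\hQ_n$, so the first $f$-return time of $A_n$ to itself is constant, equal to $\tau_n:=s_n+r_n$.

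Next, define $B_j^{(n)}:=f^j(A_n)$ and $\mathcal{P}_n:=\{B_j^{(n)}:0\le j<\tau_n\}$. I would show $\mathcal{P}_n$ is a clopen partition of $\omega(c)$ on which $f$ acts as a cyclic permutation. Disjointness: if $f^j(x)=f^k(y)$ for $x,y\in A_n$ and $0\le j<k<\tau_n$, applying $f^{\tau_n-k}$ yields $f^{\tau_n-(k-j)}(x)=f^{\tau_n}(y)\in A_n$, contradicting the minimality of the return time $\tau_n$. Covering: $\bigcup_j B_j^{(n)}$ is closed, forward $f$-invariant, and contains $c$, hence coincides with $\omega(c)$ by minimality. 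Since $A_{n+1}\subset A_n$ and successive visits of points in $A_{n+1}$ to $A_n$ occur only at multiples of $\tau_n$, we have $\tau_{n+1}=p_n\tau_n$ for some integer $p_n\ge 1$, and $\mathcal{P}_{n+1}$ refines $\mathcal{P}_n$; discarding redundant indices ensures $p_n\ge 2$.

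This data produces a continuous factor map $\pi:\omega(c)\to\Delta_\alpha$ with $\alpha=(p_1,p_2,\ldots)$, defined by $\pi(x)=(j_n(x))_{n\ge 1}$ where $x\in B_{j_n(x)}^{(n)}$, and $\pi\circ f=f_\alpha\circ\pi$ holds by construction. The final step is to prove that $\pi$ is injective; continuity then promotes it to a homeomorphism, giving the desired conjugacy.

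The hard part is injectivity of $\pi$. The bases $A_n$ shrink in Euclidean diameter, but the higher tower levels $B_j^{(n)}$ can remain large, so mesh control is not automatic from (i) alone. If $\pi(x)=\pi(y)$, then $f^{\tau_n-j_n}(x)$ and $f^{\tau_n-j_n}(y)$ both lie in $A_n$ for every $n$ and hence approach $c$ along subsequences. To conclude $x=y$, I would combine this with Koebe-type distortion bounds on the diffeomorphic portions of each chain of pullbacks (provided by niceness of $T_n$ and the symmetric position of $Q_n,\hQ_n$ relative to $c$), exploiting that $f$ is injective away from $c$. This distortion control, used to show that the partitions $\mathcal{P}_n$ eventually separate any pair of distinct points in $\omega(c)$, is the main technical hurdle.
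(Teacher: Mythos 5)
Your approach is essentially the one the paper takes: pass to a nested subsequence $T_1\supset T_2\supset\cdots$, build a tower of clopen subsets of $\omega(c)$ cyclically permuted by $f$ and refining each other, and invoke the standard criterion (the paper cites \cite[Theorem 1.1]{BKM}) for conjugacy to an adding machine. Your partitions $\mathcal{P}_n$ differ from the paper's $\mathcal{U}_n$ only superficially (you iterate the base $A_n=T_n'\cap\omega(c)$ forward $\tau_n=s_n+r_n$ steps; the paper splits the tower explicitly into a piece over $T_n'$ and a piece over $Q_n\cup\hQ_n$), and your verifications of disjointness, forward invariance, refinement, and the divisibility $\tau_{n+1}=p_n\tau_n$ are all correct.

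However, you flag the wrong resolution for the one step you rightly identify as nontrivial, namely mesh control. You propose Koebe-type distortion estimates along chains of pullbacks. The paper uses a far more elementary observation: every tower level $B_j^{(n)}$ (equivalently $U_j^n$) is contained in a single component of $\dom(T_n)\cup T_n$, because any $x\in B_j^{(n)}$ satisfies $f^{\tau_n-j}(x)\in T_n$. Since $f$ has no wandering intervals and all periodic points are hyperbolic repelling (this is the standing hypothesis $f\in\mathcal{A}_*$), the supremum of the lengths of components of $\dom(T_n)\cup T_n$ tends to $0$ as $|T_n|\to0$ (cited to \cite[Chapter~IV]{MS} and already used in the proof of the Main Theorem). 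Hence the mesh of $\mathcal{P}_n$ tends to $0$, and injectivity of $\pi$ is immediate. Your distortion route is not obviously wrong, but it would require extending each pullback chain diffeomorphically to a definite neighborhood (a real-bounds input that the lemma does not supply and that is delicate when $T_n'$ or $Q_n$ is barely well inside $T_n$), whereas the no-wandering-interval argument uses none of the quantitative geometry and is the intended one. You should replace the Koebe step by this containment-in-entry-domains observation; everything else in your sketch then goes through.

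One small additional remark: you should justify that each $B_j^{(n)}$ is closed (needed both for the partition to be a clopen cover and for the limit argument showing $\bigcup_j B_j^{(n)}=\omega(c)$). The paper does this by noting that condition (iv) forces $\omega(c)\cap\partial T_n=\emptyset$, hence $\omega(c)\cap\partial P=\emptyset$ for every entry domain $P$ of $T_n$; so $A_n$ is clopen in $\omega(c)$, compact, and its continuous images $f^j(A_n)$ are compact. Your write-up asserts clopenness but does not trace it to (iv), and this is the hypothesis doing the work.
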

\begin{proof} By passing to a subsequence if necessary, we may assume $T_n\supset T_{n+1}$ for each $n\ge 1$.
For each interval $n$, we shall construct a cover $\cU_n$ of $\omega(c)$, such that
\begin{itemize}
\item $\cU_n$ consists of consisting of finitely many pairwise disjoint close subsets of $\omega(c)$ which are cyclically permuted by $f$;
\item $\cU_{n+1}$ is a refinement of $\cU_n$;
\item the maximum diameter of elements of $\cU_n$ converges to $0$ as $n\to\infty$.
\end{itemize}
It is well-known that existence of such covers $\cU_n$ imply that $f:\omega(c)\to\omega(c)$ is topologically conjugate to some adding machine map. See for example \cite[Theorem 1.1]{BKM} and references therein.

To this end, let $k_n$ and $l_n$ denote the return time of $T_n'$ and $Q_n$ into $T_n$ respectively.
Define
$$U_j^n=\left\{
\begin{array}{ll}
f^{j}(T_n'\cap\omega(c)) & \mbox{ if } 0\le j<k_n;\\
f^{j-k_n}((Q\cup\hQ)\cap\omega(c)) & \mbox{ if } k_n\le j\le k_n+l_n,
\end{array}
\right.
$$
and let $\mathcal{U}_n=\{U_j^n:0\le j<k_n+l_n\}$.
Then the assumptions imply that $f(U_j^n)\subset U_{j+1}^n$ 
for each $0\le j<k_n+l_n$ and $U_{k_n+l_n}^n\subset U_0^n$. Moreover, condition (iv) implies that $\omega(c)\cap \partial T_n=\emptyset$, and hence $\omega(c)\cap\partial P=\emptyset$ for each entry domain $P$ of $T_n$. Thus each $U_j^n$ is a closed subset of $\omega(c)$, and $\bigcup_{j=0}^{k_n+l_n-1} U_j^n=\omega(c)$.
Let us show that $\{U_j^n\}_{j=0}^{k_n+l_n-1}$ are pairwise disjoint. If $0\le j_1<j_2<k_n$ or $k_n\le j_1<j_2< k_n+l_n$, then for any $a_1\in U_{j_1}$ and $a_2\in U_{j_2}$, $a_1$ and $a_2$ have different return times to $T_n$, thus $U_{j_1}^n\cap U_{j_2}^n=\emptyset$. If $0\le j_1<k_n\le j_2<k_n+l_n$, then for $a_1\in U_{j_1}$ and $a_2\in U_{j_2}$, $R_{T_n}(a_1)\in Q_n\cap \hQ_n$ is different from $R_{T_n}(a_2)\in T_n'$, thus we also have
$U_{j_1}^n\cap U_{j_2}^n=\emptyset$.

Since $T_{n+1}\subset T_n$, each component of $D(T_{n+1})\cup T_{n+1}$ is contained in a component of $D(T_n)\cup T_n$. It follows that $\cU_{n+1}$ is a refinement of $\cU_n$.  Since $f$ has no wandering interval, the supremum of length of components of $D(T_n)\cup T_n$ tends to $0$, thus the maximal diameter of elements of $\mathcal{U}_{T_n}$ tends to $0$.
\end{proof}

We shall now describe the combinatorial property of our example in terms of the first return map to symmetric nice intervals.
For a unimodal map $f$ with $f(0)=f(1)=0$ and $f(c)>c$, there is an orientation-reversing fixed point $q$. For $x\in [0,1]\setminus\{c\}$, let $\hat{x}$ denote the preimage of $f(x)$ other than $x$, and let $\hat{c}=c$. Let $I_0=(\hat{q}, q)$, and whenever $I_k$ is defined and $c$ returns to $I_k$, define $I_{k+1}$ to be the return domain of $I_k$ containing $c$. (So $I_0\supset I_1\supset \cdots$ be the principal nest starting from $I_0$.) Let $R_{I_k}$ denote the first return map to $I_k$. If $R_{I_k}(c)$ returns to $I_k$, let $J_{k+1}$ denote the return domain of $I_k$ containing $R_{I_k}(c)$, and let $\hJ_{k+1}=\{\hat{x}: x\in J_{k+1}\}$.
These objects depend of course on $f$, and when we want to emphasize the map $f$, we write $I_k^f$, $J_{k}^f$, $R_{I_k}^f$, etc.
\begin{prop} \label{prop:map}
There is a unimodal map $f:[0,1]\to [0,1]$ with the following properties:
\begin{enumerate}
\item $f(c)>c$, $f^2(c)<\hat{q}$ and $f^3(c),f^5(c)\in I_0$;
\item $I_{k+1}$ and $J_{k+1}$ are defined and disjoint for all $k=0,1,\ldots$;
\item $R_{I_k}(I_{k+1})\supset I_{k+1}\ni c$;
\item $R_{I_{k}}|I_{k+1}=R_{I_{k-1}}^2|I_{k+1}$ for each $k\ge 1$;
\item When $k$ is odd, $R_{I_{k-1}}(J_{k+1})\subset \hJ_{k}$ and $R_{I_{k}}|J_{k+1}=R_{I_{k-1}}^2|J_{k+1}$;
\item When $k\ge 2$ is even, $R_{I_{k}}|J_{k+1}=R_{I_{k-1}}|J_{k+1}$.
\end{enumerate}
Moreover, for each $\ell>1$, there is $a\in (0,1)$ such that the unimodal
\begin{equation}\label{eqn:goodmap}
f(x)=a(1-|2x-1|^\ell)
\end{equation}
has the above properties.
\end{prop}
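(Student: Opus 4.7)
The plan is to read off from conditions (3)--(6) a recursive description of the target combinatorial type (itinerary of $c$), and then to realize this type inside the family (\ref{eqn:goodmap}) by an inductive parameter selection based on the monotonicity of the kneading invariant.

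\emph{Step 1 (combinatorial reduction).} I would write $s_k$ for the return time of the central domain $I_k$ to $I_{k-1}$ and $u_k$ for the return time of $J_k$ to $I_{k-1}$. Condition (4), applied on $I_{k+1}\subset I_k$, gives $s_{k+1}=s_k+u_k$. For odd $k$, condition (5) places $R_{I_{k-1}}(J_{k+1})$ in $\hJ_k$, and since $f^{u_k}(\hJ_k)=f^{u_k}(J_k)$, this forces the return time of $J_{k+1}$ to $I_k$ to equal $s_k+u_k=s_{k+1}$, i.e.\ $u_{k+1}=s_{k+1}$. For even $k\ge 2$, condition (6) gives $u_{k+1}=s_k$ directly. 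Combined with $s_1=3$, $u_1=2$ read off from (1), these recursions determine a unique explicit sequence (in particular $s_{2j+1}=2s_{2j}$ and $s_{2j+2}=3s_{2j}$). Together with the spatial data prescribing on which side of $c$ each $f^{s_k}(c)$ and $f^{s_k}(J_{k+1})$ lies, this pins down a complete admissible itinerary of $c$ relative to the dynamical partition of $[0,1]$.

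\emph{Step 2 (parameter construction).} For each $\ell>1$ I would realize this itinerary in $f_a(x)=a(1-|2x-1|^\ell)$, $a\in(0,1)$, by induction on $N$, producing a nested family of closed parameter intervals $[a_N^-,a_N^+]$ such that for every $a$ in the interval: the nest $I_0^{f_a}\supset\cdots\supset I_N^{f_a}$ and the non-central domains $J_1^{f_a},\ldots, J_N^{f_a}$ are defined, conditions (1)--(6) hold up to level $N$, and $a\mapsto f_a^{s_{N+1}}(c)$ sweeps monotonically across a neighborhood of its prescribed target position in $I_N^{f_a}$ as $a$ varies. The inductive step uses continuity of $a\mapsto f_a^m(c)$ together with the Milnor--Thurston monotonicity of the kneading invariant along the family (see e.g.\ \cite[Chapter II]{MS}): prescribing the next digit of the itinerary is an open admissible condition that cuts out a nondegenerate closed subinterval $[a_{N+1}^-,a_{N+1}^+]\subset[a_N^-,a_N^+]$. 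By compactness $\bigcap_N[a_N^-,a_N^+]\ne\emptyset$, and any parameter in the intersection yields the required map.

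\emph{Main obstacle and conclusion.} The delicate point is the admissibility check at each inductive stage, in particular for condition (5) at odd levels, where one must arrange $R_{I_{k-1}}(J_{k+1})\subset\hJ_k$ and not $\subset J_k$. Since $J_k$ and $\hJ_k$ are symmetric about $c$ and carry identical dynamical data (same return time, same $f$-image), the distinction is governed by the sign of $f^{s_{k+1}}(c)-c$ at the previous stage, which can be freely prescribed via the monotonicity of the kneading invariant. Once $a$ is fixed, properties (1)--(6) follow from the construction: property (2) holds because $I_{k+1}$ and $J_{k+1}$ are distinct return domains of the nice interval $I_k$, and (3) holds because (4) together with the prescribed location of $R_{I_k}(c)$ forces $R_{I_k}(c)\in I_{k+1}$, so that $R_{I_k}(I_{k+1})\supset I_{k+1}$ by the unimodal shape of the branch.
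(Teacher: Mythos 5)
Your approach is genuinely different from the paper's, and it has a gap at the crux of the argument.

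The paper separates the task into two parts: first, construct \emph{some} continuous unimodal map $f$ satisfying (1)--(6), by an explicit inductive modification scheme (building $f_n$ agreeing with $f_{n-1}$ off $I_{n}^{f_{n-1}}$, post-composing an inner branch with a homeomorphism $\varphi_n$ to steer the critical orbit, and arranging uniform smallness conditions (a)--(d) so that $f_n\to f$). Second, it invokes the \emph{full family} theorem (\cite[Section II.4]{MS}) to conclude that the kneading sequence of this $f$ is realized by some $a\in(0,1)$ in the family $a(1-|2x-1|^\ell)$. The explicit construction of $f_n$ is precisely what establishes that the target combinatorics is \emph{admissible}: at each stage the new return domain and side of $c$ are manufactured by hand, so there is never any question of whether the prescribed itinerary is realizable.

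Your proposal collapses both steps into a single nested-parameter-interval argument inside the family, and this is where the gap lies. You derive the return-time recursion correctly ($s_{k+1}=s_k+u_k$, $u_{k+1}=s_{k+1}$ for $k$ odd, $u_{k+1}=s_k$ for $k\ge 2$ even, giving $s_{2j+1}=2s_{2j}$, $s_{2j+2}=3s_{2j}$; this matches the paper's $r_k$, $t_k$), but then assert that this ``pins down a complete admissible itinerary'' and that ``prescribing the next digit \ldots\ cuts out a nondegenerate closed subinterval.'' Neither claim is justified: verifying that the target digit is attained within the current parameter interval is exactly the admissibility problem, and it is the hard part. The paper solves it by exhibiting an explicit map realizing each finite stage; your proposal just names it as ``the delicate point'' and asserts it can be ``freely prescribed.'' To close the gap you would need either to reproduce the paper's explicit $f_n$ construction (and then you might as well invoke the full family theorem, as the paper does), or to prove directly that at each stage both candidate digits for the position of $f_a^{s_{k+1}}(c)$ relative to $J_k$ vs.\ $\hJ_k$ occur as $a$ varies over the current interval, which again requires knowing the combinatorics so far is realizable.

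A secondary issue: you cite ``Milnor--Thurston monotonicity of the kneading invariant'' for the family $a(1-|2x-1|^\ell)$ with arbitrary real $\ell>1$. Monotonicity is a deep theorem known for real polynomial families via complex-analytic methods; it is not an elementary consequence of \cite[Chapter II]{MS}, and for non-integer $\ell$ it is at best delicate. What you actually need is only the \emph{full family} property (every admissible kneading is realized), which is weaker, holds here, and is exactly what the paper cites. The monotonicity claim is not needed for your argument and should be dropped; the full-family property (again, predicated on having an admissible target) suffices for the nested-interval intersection to be nonempty.
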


Let $r_0=3$, $t_0=2$ and for each $k\ge 0$, define inductively
\begin{equation}\label{eqn:recr}
r_{k+1}=r_k+t_k, \text{ for all } k=0,1,\ldots,
\end{equation}
and
\begin{equation}\label{eqn:rect}
t_{k+1}=\left\{
\begin{array}{ll}
r_k & \text{ if } k \text{ is odd},
\\
r_{k+1} & \text{ if } k \text{ is even}.
\end{array}
\right.
\end{equation}
In fact, $r_k$ and $t_k$ are the return time of $c$ and $R_{I_k}(c)$ to $I_k$ for a map satisfying the properties (1)-(6).
\begin{proof} It suffices to construct a continuous unimodal map with the properties (1)-(6). In fact, each $\ell>1$, $x\mapsto a(1-|2x-1|^\ell)$, $0\le a\le 1$, is a full family and thus the existence of a map of the form (\ref{eqn:goodmap}) follows. See \cite[Section II.4]{MS}.

To construct such a continuous unimodal map, we argue as in \cite[Section 2.1]{NS}. We shall first construct inductively a sequence of unimodal maps $f_n:[0,1]\to [0,1]$, $n=0,1,\ldots$, with the same turning pint $c$, such that the intervals $I_0^{f_n}, I_1^{f_n},\ldots, I_{n+1}^{f_n}$, $J_1^{f_n}, J_2^{f_n}, \ldots, J_{n+1}^{f_n}$, $\widehat{J}_1^{f_n}, \widehat{J}_2^{f_n},\ldots, \widehat{J}_{n+1}^{f_n}$ are well defined and such that the properties (1)-(6) hold for $f=f_n$ and $k\le n$. Moreover, our construction satisfies
\begin{enumerate}
\item[(a)] $f_{n+1}=f_{n}$ for $x\in [0,1]\setminus I_{n+1}$ for all $n\ge 0$;
\item[(b)] $I_k^{f_n}=I_k^{f_{n-1}}$ and $J_k^{f_n}=J_k^{f_{n-1}}$ for all $n\ge 1$ and $k\le n$;
\item[(c)] $2|I_{n+2}^{f_{n+1}}|\le |I_{n+1}^{f_{n}}|$;
\item[(d)] $2|f_{n+1}(I_{n+2}^{f_{n+1}})|\le |f_n(I_{n+1}^{f_n})|$.
\end{enumerate}

For the starting step, we take $f_0$ to be an arbitrary unimodal map for which the critical point $c$ satisfies $f_0(c)>f_0^4(c)>c=f_0^5(c)> f_0^3(c)>f_0^2(c)$.  It is straightforward to check that for this map $f_0$, the return time of $c$ to $I_0^{f_0}=(\hat{q}^{f_0}, q^{f_0})$ is equal to $3$ and the return time of $f_0^3(c)$ to $I_0^{f_0}$ is $2$, so $I_1^{f_0}$ and $J_1^{f_0}$ are well-defined and disjoint. Moreover, the map $f_0^3$ is monotone increasing on the right component of $I_1^{f_0}\setminus\{c\}$ and thus $R_{I_0^{f_0}}^{f_0}(I_1^{f_0})\supset I_1^{f_0}\ni c$. The properties (4)-(6) are null in this case.

For the induction step, assuming that $f_n$ is defined such that the properties (1)-(6) holds for $f=f_n$ and $k\le n$, we shall modify the map $f_n$ on the interval $I_{n+1}^{f_n}$.
To be definite, let us assume that $n$ is even.
Note that $r_n$ is the return time of $c$ into $I_n^{f_n}$ and $t_n$ is the return time of $f_n^{r_n}(c)$ into $I_n^{f_n}$.
Let $U_{n+1}=(f_n^{r_n}|I_{n+1}^{f_n})^{-1}(J_{n+1}^{f_n})$ and let $V_{n+1}=(f_n^{r_n}|I_{n+1}^{f_n})^{-1}(\hJ_{n+1}^{f_n})$. Then $U_{n+1}\ni c$ is an interval, $V_{n+1}$ has two components and $U_{n+1}\cap V_{n+1}=\emptyset$. Moreover, each component $V_{n+1, i}$, $i=1,2$, of $V_{n+1}$ is mapped homeomorphically onto $I_n^{f_n}$ by $f_n^{r_n+t_n}=f_n^{r_{n+1}}$, so it contains an interval $W_{n+1, i}$ such that $f_n^{r_{n+1}}$ maps $W_{n+1,i}$ homeomorphically onto $I_{n+1}^{f_n}$.
Let $K_{n+1}$ be the component of $f_n^{-(r_{n+1}-1)}(I_n^{f_n})$ which contains $f_n(c)$. Then $f_n^{r_{n+1}-1}$ maps $K_{n+1}$ homeomorphically onto $I_{n}^{f_n}$ and $f_n(U_{n+1})\subset K_{n+1}$, $f_n(\partial U_{n+1})\subset \partial K_{n+1}$. Let $K_{n+1}'$ denote the subinterval of $K_{n+1}$ which is mapped onto $I_{n+1}^{f_n}$. Let $\varphi_n: K_{n+1}\to K_{n+1}$ be an orientation preserving homeomorphism such that $f_n^{r_{n+1}-1}(\varphi_n(f_n(c)))$ is contained in a component $W_{n+1,i_1}$ for some $i_1\in \{1,2\}$  and such that $f_n^{r_{n+1}-1}(\varphi_n(f_n(U_{n+1})))$ contains $W_{n+1, i_2}$ for $i_2\in \{1,2\}\setminus \{i_1\}$. Note that we can choose $\varphi_n$ such that $\varphi_n^{-1}(K_{n+1}')$ is deep inside $K_{n+1}$. Define $f_{n+1}=f_n$ outside $U_{n+1}$ and $f_{n+1}=\varphi_n\circ f_n$ on $U_{n+1}$. Then $f_{n+1}$ is the map which we looked for. Indeed, $J_{n+2}^{f_{n+1}}=W_{n+1, i_1}$ and $I_{n+2}^{f_{n+1}}=(f_n|U_{n+1})^{-1} (\varphi_n^{-1}(K_{n+1}'))$ can be made much smaller than $I_{n+1}^{f_n}$.

So these unimodal maps $f_n$ have been constructed. The properties (a) and (d) imply that $\{f_n\}$ is equi-continuous, while (a) and (c) implies that $f_n(x)$ is eventually constant for each $x\in [0,1]\setminus\{c\}$. Thus $f_n$ converges to a continuous unimodal map $f$ as $n\to\infty$.
By continuity, $f^{r_n}(c)=\lim_{m\to\infty} f_m^{r_n}(c)$ enters the closure of $I_{n}:=I_{n}^{f_{n-1}}$, so $c$ is a recurrent critical point, which implies then $f^i(c)$ is disjoint from the boundary of $I_n$ for any $i, n\ge 0$. It is then easily verified that the conditions (1)-(6) hold for $f$ by continuity.
\end{proof}

Now let us fix a unimodal map (\ref{eqn:goodmap}) with the properties (1)-(6).
We first show that
\begin{prop} \label{prop:ec}
The map $f:\omega(c)\to \omega(c)$ is topologically conjugate to an adding machine.
\end{prop}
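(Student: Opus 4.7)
The plan is to apply Lemma~\ref{lem:eqvcon} to the subsequence $T_n=I_{2n}$, $T_n'=I_{2n+1}$, $Q_n=J_{2n+1}$, $\hQ_n=\hJ_{2n+1}$. Hypothesis (i), $|I_{2n}|\to 0$, follows from the construction in Proposition~\ref{prop:map}, which forced $2|I_{k+2}^{f_{k+1}}|\le|I_{k+1}^{f_k}|$. Hypothesis (ii) is immediate from the definitions of $I_{n+1}$ and $\hJ_{n+1}$. The real work is conditions (iii) and (iv), which say that $R_{I_{2n}}$ swaps $I_{2n+1}\cap\omega(c)$ with $(J_{2n+1}\cup\hJ_{2n+1})\cap\omega(c)$.

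The core step will be to prove by induction on $n$, using properties (3)--(6) of Proposition~\ref{prop:map}, the following two statements: (A) $\omega(c)\cap I_n\subset I_{n+1}\cup J_{n+1}\cup\hJ_{n+1}$ for every $n\ge 0$; and (B) at each \emph{even} level $n$, the principal orbit $x_k:=R_{I_n}^k(c)$ alternates, with $x_{2k}\in I_{n+1}$ and $x_{2k+1}\in J_{n+1}\cup\hJ_{n+1}$. For (B) the crucial computation is the identity
$$R_{I_n}^2\big|_{I_{n+2}\cup J_{n+2}\cup\hJ_{n+2}}=R_{I_{n+1}}\big|_{I_{n+2}\cup J_{n+2}\cup\hJ_{n+2}},$$
which comes from (4) on $I_{n+2}$ and, critically, from (5) applied with the odd index $k=n+1$ on $J_{n+2}$ (and symmetrically on $\hJ_{n+2}$). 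Iterating this identity yields $x_{2k}=R_{I_{n+1}}^k(c)$, which by (A) at level $n+1$ lies in $I_{n+2}\cup J_{n+2}\cup \hJ_{n+2}\subset I_{n+1}$; one more application of $R_{I_n}$, combined with (4) giving $R_{I_n}(I_{n+2})\subset J_{n+1}$ (the image is an interval in $I_n\setminus I_{n+1}$ containing $R_{I_n}(c)\in J_{n+1}$) and (5) giving $R_{I_n}(J_{n+2}\cup\hJ_{n+2})\subset\hJ_{n+1}$, puts $x_{2k+1}$ in $J_{n+1}\cup\hJ_{n+1}$. Statement (A) then follows by taking closures of the orbit trace, using that $\partial I_k\cup\partial J_k\cup\partial\hJ_k$ consists of iterated $f$-preimages of the repelling fixed point $q$ and hence is disjoint from $\omega(c)$.

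To pass from the orbit of $c$ to an arbitrary $x\in\omega(c)$, I would use continuity: any $x\in I_{2n+1}\cap\omega(c)$ is a limit of orbit points $f^{k_i}(c)\in I_{2n+1}$, which correspond to even-indexed iterates in the sequence $x_k$ at level $2n$. Since $I_{2n+1}$ is a single return domain of $I_{2n}$, the map $R_{I_{2n}}|I_{2n+1}$ is continuous, and (B) puts each $R_{I_{2n}}(f^{k_i}(c))$ in $J_{2n+1}\cup\hJ_{2n+1}$, so $R_{I_{2n}}(x)$ lies in the closure of that set; disjointness of $\omega(c)$ from the boundary yields $R_{I_{2n}}(x)\in J_{2n+1}\cup\hJ_{2n+1}$. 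This is condition (iii), and the symmetric argument on points of $J_{2n+1}\cup\hJ_{2n+1}$ gives (iv). Lemma~\ref{lem:eqvcon} then finishes the proof.

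The main obstacle will be the identity $R_{I_n}^2=R_{I_{n+1}}$ on $J_{n+2}\cup\hJ_{n+2}$ needed for (B). The collapse of two return steps into one is provided by property (5) at odd $k=n+1$, i.e., at even levels $n$; at odd $n$ property (6) gives instead $R_{I_{n+1}}=R_{I_n}$ on $J_{n+2}$, so the orbit fails to alternate at odd levels (it has two consecutive visits to $I_{n+1}$), which is precisely why only the sub-sequence $T_n=I_{2n}$, and not the full principal nest, fits the hypotheses of Lemma~\ref{lem:eqvcon}.
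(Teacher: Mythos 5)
Your overall plan — invoke Lemma~\ref{lem:eqvcon} with $T_n=I_{2n}$, $T_n'=I_{2n+1}$, $Q_n=J_{2n+1}$, $\hQ_n=\hJ_{2n+1}$, and reduce (iii) and (iv) to an alternation statement about the return dynamics — is exactly what the paper does, and the identity $R_{I_{n+1}}=R_{I_n}^2$ on $I_{n+2}\cup J_{n+2}\cup\hJ_{n+2}$ (from (4) and (5) with $k=n+1$), together with $R_{I_n}(I_{n+2})\subset J_{n+1}$ and $R_{I_n}(J_{n+2}\cup\hJ_{n+2})\subset\hJ_{n+1}$, are the right ingredients.

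However, your proposed induction does not close, because of a circular dependency between (A) and (B). You derive (B) at even level $n$ by using, at each step $k$, the inclusion $x_{2k}\in I_{n+2}\cup J_{n+2}\cup\hJ_{n+2}$, which is (A) at the \emph{odd} level $n+1$; and the only mechanism you give for (A) — ``taking closures of the orbit trace'' — is to pass to closures of the $R_{I_m}$-orbit of $c$, which you control only via (B), which you have only at \emph{even} levels. So (A) at odd levels has no independent source, and the induction cannot get started at any base level either: to establish (B) at level $0$ you need (A) at level $1$, for which your scheme offers no argument. The underlying difficulty is real, not notational: for a point $y\in J_{n+1}$ (such as $x_{2k+1}$), the map $R_{I_n}|J_{n+1}$ is a diffeomorphism onto all of $I_n$, so knowing only $x_{2k+1}\in J_{n+1}$ is not enough to conclude $x_{2k+2}\in I_{n+1}$; one needs to locate $x_{2k+1}$ inside $J_{n+1}$ with respect to the pulled-back structure, which is precisely the information (A) at level $n+1$ is supposed to supply, and which your argument has not produced.

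The paper's proof avoids this by running a well-founded double induction on $(k,j)$ over the forward $f$-orbit of the \emph{intervals} $I_{k+1}$ and $J_{k+1}$ rather than the critical orbit alone: it shows $f^j(I_{k+1})\cap I_l\subset V_{l+1}$ for $0\le j<r_k$ and $f^j(J_{k+1})\cap I_l\subset V_{l+1}$ for $0\le j<t_k$, for \emph{all} $l\le k$ simultaneously, where $V_{l+1}=I_{l+1}\cup J_{l+1}\cup\hJ_{l+1}$ if $l+1$ is odd and $V_{l+1}=I_{l+1}\cup J_{l+1}$ if $l+1$ is even. Passing from level $k-1$ to level $k$ uses the recursions $r_k=r_{k-1}+t_{k-1}$ and the relation $f^{r_{k-1}}(I_{k+1})\subset J_k$, so the new iterates $r_{k-1}\le j<r_k$ reduce to the already-established claim about $f^{j'}(J_k)$ for $j'<t_{k-1}$. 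This produces $\omega(c)\cap I_k\subset\overline{V_{k+1}}$ at every level $k$ at once, which is the (A)-type statement you need, with no circularity. Also note that the paper's $V_{k+1}$ is the smaller set $I_{k+1}\cup J_{k+1}$ when $k$ is odd; your (A) including $\hJ_{n+1}$ at odd levels is a weaker containment but would have sufficed, had you been able to prove it. To repair your argument you would need to track, in some form, the full orbit of the return domains across levels — essentially reintroducing the paper's interval induction — rather than only the critical orbit.
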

\begin{proof}
Let $V_k=I_k\cup J_k\cup\hJ_k$ when $k$ is odd, and $V_k=I_k\cup J_k$ when $k$ is even. By induction, it is easy to see that for each $0\le l\le k$,
\begin{itemize}
\item $f^j(I_{k+1})\cap I_l\subset V_{l+1}$ holds for $0\le j< r_k$;
\item $f^j(J_{k+1})\cap I_l=f^j(\hJ_{k+1})\cap I_l\subset V_{l+1}$ holds for $0\le j< t_k$.
\end{itemize}
It follows that $$\omega(c)\cap I_k \subset \overline{V_{k+1}}.$$
Applying Lemma~\ref{lem:eqvcon} to $T_n=I_{2n}$, $T_n'=I_{2n+1}$, $Q_n=J_{2n+1}$ and $\hQ_n=\hJ_{2n+1}$, we conclude that $f:\omega(c)\to\omega(c)$  is topologically conjugate to an adding machine.
\end{proof}
To show that when $\ell>1$ is large enough, the map has a wild attractor, we shall apply ~\cite[Theorem 6.1]{B}.
 To this end, we shall recall a combinatorial language, called {\em kneading map}, which was used there.
Assume $f\in \mathcal{A}_*$.
The {\em closest precritical points} $z_k$ and {\em cutting times} $S_k$ are defined as follows: $S_0 := 1$,
$z_0 := f^{-1}(c) \cap (0, c)$. Inductively,
$$S_{k+1} := \min\{n > S_k:  f^{-n}(c)\cap (z_k, c)\not=\emptyset\},$$
and
$$z_{k+1} := f^{-S_{k+1}}(c)\cap (z_k, c).$$
For each $k=1,2,\ldots$, $f^{S_{k+1}}$ is monotone on the interval $[z_k, c]$. So $f^{S_{k+1}-S_k}$ maps $[c, c_{S_k}]$ monotonically onto an interval containing $c$. Thus $S_{k+1}-S_k$ is also a cutting time. This implies that there is an integer $Q(k+1)$ such that
$$S_{k+1}=S_k+S_{Q(k+1)}.$$
Define also $Q(0)=0$. The function $k\mapsto Q(k)$, $k=0,1,\ldots$ is called the {\em kneading map} of $f$.

\begin{lem} The cutting times of $f$ not smaller than $r_1$ are the following:
$$r_1,  r_1+r_0, r_2, r_3, r_2+r_3, r_4, r_5, \ldots.$$
\end{lem}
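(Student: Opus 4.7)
I would approach this proof by induction on the principal nest level $k$, using the classical characterization that $n$ is a cutting time precisely when $c_n := f^n(c)$ falls into a strictly smaller closest-precritical interval $(z_j,\hat z_j)$ than any earlier iterate. The base case collects $S_0 = 1$, $S_1 = 2$, $S_2 = r_0 = 3$ and $S_3 = r_1 = 5$, which one reads off directly from property (1) of Proposition~\ref{prop:map} by locating $f(c), f^2(c), f^3(c), f^5(c)$ relative to $\hat q$ and the closest precritical points lying in $I_0$.

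The main inductive step simultaneously establishes: (i) for every $k \geq 1$, $r_k$ is a cutting time, because $c$ first enters $I_k$ at time $r_k$ and $I_k$ is contained in a fresh closest-precritical interval, introduced by this very visit; (ii) for odd $k$, property (5) yields $c_{r_k + r_{k-1}} \in \hJ_k$ --- one computes the first return time of $J_{k+1}$ to $I_{k-1}$ to be $t_k - t_{k-1} = r_k - r_{k-2} = r_{k-1}$ (where the last equality uses that $r_k = r_{k-1}+r_{k-2}$ for odd $k$) --- and the visit to $\hJ_k$ introduces the mirror closest-precritical interval of the one produced by the inductive visit $c_{r_{k-1}} \in J_k$, so $r_k + r_{k-1}$ is also a cutting time; (iii) for even $k \geq 2$, property (6) identifies the first return of $J_{k+1}$ to $I_k$ with its first return to $I_{k-1}$, which prevents any additional cutting time in $(r_k, r_{k+1})$.

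The negative statement --- no further cutting times appear in $(r_k, r_{k+1})$ beyond those listed --- will be proved by showing that $c_n \notin I_k$ for $r_k < n < r_{k+1}$ (immediate from the definition of $t_k$) and then tracking the orbit via properties (5)--(6) to verify that $c_n$ is separated from the current finest closest-precritical interval except at the exceptional time $r_k + r_{k-1}$ when $k$ is odd. The main obstacle will be the delicate identification of closest-precritical points with the geometric pullbacks that form $\partial I_k$: because $\partial I_0 = \{q, \hat q\}$ is not a pair of closest precritical points, the boundary identification only begins at level $k = 1$ and the inductive indexing must be shifted accordingly. Moreover, the central cascade structure (when $k$ is odd, $r_{k+1} = 2r_k$) requires careful book-keeping of the preimages of $c$ inside the current closest-precritical interval to rule out any hidden cutting time in the sub-intervals $(r_k, r_k + r_{k-1})$ and $(r_k + r_{k-1}, r_{k+1})$.
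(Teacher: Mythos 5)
Your proposal follows essentially the same route as the paper's proof: both work directly with the principal nest $I_0 \supset I_1 \supset \cdots$, use property (3) of Proposition~\ref{prop:map} to show each $r_k$ is a cutting time, use property (5) and the return-time arithmetic $t_k - t_{k-1} = r_k - r_{k-2} = r_{k-1}$ (for odd $k$) to locate the extra cutting time $r_k + r_{k-1}$ via $c_{r_k + r_{k-1}} \in \widehat{J}_k$ on the opposite side of $c$ from $c_{r_{k-1}} \in J_k$, and invoke property (6) together with monotonicity of the relevant iterates on $[c_{r_k},c]$ to exclude all other candidates. The details you flag as ``obstacles'' (matching closest precritical points to the nest boundaries, and ruling out hidden cutting times inside the central cascade subintervals) are precisely the points the paper handles via the monotonicity claims about $f^{t_k}$ and $f^{r_{k-1}}$ on $[c_{r_k},c]$ and the auxiliary pullback $K_{k+1}$ of $I_k$ lying between $I_{k+1}$ and $J_{k+1}$; your sketch gestures at the same facts without writing them out.
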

\begin{proof} For each $k=1,2,\ldots$, $f^{r_k}$ maps each component of $I_{k+1}\setminus \{c\}$ monotonically onto an interval containing $c$, thus $r_k$ is a cutting time.
If $k\ge 2$ is even, then $f^{t_k}=f^{r_{k-1}}$ is monotone on $[c_{r_k}, c]\subset I_k$, hence there is no cutting time between $r_k$ and $r_k+t_k=r_{k+1}$.
Now assume that $k$ is odd. We need to show that $r_k+r_{k-1}$ is the only cutting time between $r_k$ and $r_{k+1}$.
To this end, let $s$ be the cutting time next to $r_k$.  So $s$ is the smallest integer with $s>r_k$ and such that $f^{s-r_k}$ maps $[c_{r_k},c]$ monotonically onto an interval containing $c$ in the interior. Since $c_{r_k}\in J_{k+1}$, and $f^{r_{k-1}}$ is monotone on $[c_{r_k},c]$, so $s\ge r_k+r_{k-1}$. Note that there is a component $K_{k+1}$ of $f^{-r_{k-1}}(I_k)$ between $I_{k+1}$ and $J_{k+1}$, and $f^{r_{k-1}}$ maps $K_{k+1}$ monotonoically onto $I_k$. Thus $s=r_k+r_{k-1}$. Finally since $c_{r_k+r_{k-1}}\in I_k$, $f^{t_{k-1}}$ is monotone on $[c_{r_k+r_{k-1}},c]$, there is no cutting time between $r_k+r_{k-1}$ and $r_k+r_{k-1}+t_{k-1}=r_{k+1}$.
\end{proof}

\begin{lem}\label{lem:bruincond}
There exist positive integers $k_1$ and $N$ such that when $k\ge k_1$ we have
$$Q(k+1)\ge Q^2(k)+1,$$
and $$k-Q(k)\le N.$$
\end{lem}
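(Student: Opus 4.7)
The plan is to use the explicit list of cutting times provided by the preceding lemma to write down $Q$ in closed form. Let $j_0$ be the index with $S_{j_0}=r_1$; such $j_0$ exists because $r_1$ is a cutting time. The preceding lemma groups the cutting times not smaller than $r_1$ in threes, indexed by $m\geq 0$:
\[
S_{j_0+3m}=r_{2m+1},\qquad S_{j_0+3m+1}=r_{2m+1}+r_{2m},\qquad S_{j_0+3m+2}=r_{2m+2}.
\]

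Next I would compute the successive differences and recognize each as an earlier cutting time. From (\ref{eqn:recr}) and (\ref{eqn:rect}) one reads off the two useful recurrences $r_{2m+1}=r_{2m}+r_{2m-1}$ and $r_{2m+2}=2r_{2m+1}$, so the three consecutive increments inside the $m$-th group are $r_{2m}$, $r_{2m+1}-r_{2m}=r_{2m-1}$, and $r_{2m+3}-r_{2m+2}=r_{2m+1}$. Each of these is of the form $S_{j_0+3m'}$ or $S_{j_0+3m'+2}$ for some smaller $m'$, so by the defining relation $S_{k+1}-S_k=S_{Q(k+1)}$ one obtains, for all $m\geq 1$,
\[
Q(j_0+3m+1)=j_0+3m-1,\qquad Q(j_0+3m+2)=j_0+3m-3,\qquad Q(j_0+3m+3)=j_0+3m.
\]

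Both claimed inequalities are now direct consequences. The three formulas immediately give $k-Q(k)\in\{2,3,5\}$ whenever $k\geq j_0+4$, proving the second inequality with $N=5$. For the first inequality I would substitute each of the three residues of $k-j_0$ modulo $3$ into $Q(k+1)$ and $Q^2(k)=Q(Q(k))$: in each case $Q(k+1)-Q^2(k)$ turns out to be one of $3$, $5$, or $6$, hence is $\geq 1$. One must take $k_1$ large enough (e.g.\ $k_1=j_0+11$) so that $Q(k)$ itself lies in the range where the closed-form expression for $Q$ is valid. This index-tracking is the only mildly delicate point of the argument; there is no serious analytic content, just careful bookkeeping of which index-residue each of $k$, $Q(k)$, and $Q^2(k)$ falls into.
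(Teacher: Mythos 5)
Your proposal is correct and follows essentially the same route as the paper: read off the explicit list of cutting times from the preceding lemma, use the recurrences for $r_k$ and $t_k$ to identify each successive difference $S_{k+1}-S_k$ as an earlier cutting time, deduce the closed form $k-Q(k)\in\{2,3,5\}$ by residue of $k$ mod $3$, and check $Q(k+1)-Q^2(k)\in\{3,5,6\}$ case by case. The only difference is a harmless shift of index ($j_0$ versus $k_0+1$).
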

\begin{proof} The previous lemma implies that there exists $k_0\ge 1$ such that for each $m=0,1,\ldots$,
$$S_{k_0+3m+1}=r_{2m+1}, S_{k_0+3m+2}=r_{2m+1}+r_{2m}, S_{k_0+3m+3}=r_{2m+2}.$$
Using (\ref{eqn:recr}) and (\ref{eqn:rect}), we easily find
\begin{align*}
S_{k_0+3m+2}-S_{k_0+3m+1}& =r_{2m}=S_{k_0+3m}\\
S_{k_0+3m+3}-S_{k_0+3m+2}& = t_{2m+1}-r_{2m}= r_{2m-1}=S_{k_0+3m-2},\\
S_{k_0+3m+4}-S_{k_0+3m+3}& = r_{2m+3}-r_{2m+2}= r_{2m+1}= S_{k_0+3m+1},
\end{align*}
which implies that when $k$ is large enough,
$$Q(k)=\left\{
\begin{array}{ll}
k-5 & \mbox{ if } k-k_0\equiv 0 \mod 3£»\\
k-3 & \mbox{ if } k-k_0\equiv 1 \mod 3£»\\
k-2 &\mbox{ if } k-k_0\equiv 2\mod 3£¬
\end{array}
\right.
$$
and hence
$$Q^2(k)=\left\{
\begin{array}{ll}
k-8 & \mbox{ if } k-k_0\equiv 0\mod 3£»\\
k-6 &\mbox{ if } k-k_0\equiv 1\mod 3£»\\
k-7 &\mbox{ if } k-k_0\equiv 2\mod 3.
\end{array}
\right.
$$
Thus the lemma holds with $N=5$ and $k_1$ large enough.
\end{proof}
\begin{cor} \label{cor:wilda}
When $\ell>1$ is sufficiently large, $f$ has a wild attractor.
\end{cor}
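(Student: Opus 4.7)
The proof proceeds by directly invoking \cite[Theorem 6.1]{B}, which gives a combinatorial sufficient condition on the kneading map $Q$ of a non-infinitely-renormalizable unimodal map in $\mathcal{A}_*$ for the existence of a wild attractor, provided the critical order $\ell$ is taken sufficiently large. My task is thus reduced to checking that the hypotheses of that theorem are met by our $f$ and then letting $\ell$ be large.

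First, I would verify that the kneading map of $f$ satisfies the combinatorial hypotheses required by Bruin. Lemma~\ref{lem:bruincond} already packages exactly what is needed: the recursive separation condition $Q(k+1)\ge Q^2(k)+1$ for $k\ge k_1$, together with the bounded co-growth $k-Q(k)\le N$. The first inequality is the structural recurrence ensuring that cutting times $S_k$ grow at least Fibonacci-like along the subsequence $Q^n(k)$, while the second guarantees that the distortion sums appearing in Bruin's criterion remain quantitatively controlled.

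With these two inputs in hand, \cite[Theorem 6.1]{B} produces a threshold $\ell_0=\ell_0(Q)$ such that whenever the critical order exceeds $\ell_0$, the map admits a wild attractor. By Proposition~\ref{prop:map}, for every $\ell>1$ the standard family (\ref{eqn:goodmap}) realizes our combinatorics, so it suffices to pick $\ell>\ell_0$ in that family. Combined with Proposition~\ref{prop:ec}, which identifies $f|\omega(c)$ with an adding machine, this completes the proof of Theorem~\ref{thm:ec}.

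The main obstacle, entirely bypassed by citing \cite{B}, would be the direct measure-theoretic argument that the non-escaping set has positive Lebesgue measure. Bruin's approach accomplishes this by propagating pull-back distortion estimates through the principal nest and showing that the fraction of escaping mass is summable; the quantitative $(1/\ell)$-Koebe losses near the critical point are dominated by the geometric decay coming from $Q(k+1)\ge Q^2(k)+1$ precisely when $\ell$ is large. Reproducing this argument in detail is the hard part, so the efficient route is simply to verify the combinatorial hypotheses (which we did) and defer the analytic core to \cite{B}.
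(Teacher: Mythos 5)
Your proposal matches the paper's proof: both invoke \cite[Theorem 6.1]{B} and rely on Lemma~\ref{lem:bruincond} to verify the combinatorial hypotheses, with the threshold on $\ell$ coming from Bruin's theorem. The added commentary on why the conditions suffice is accurate but not a different argument.
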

\begin{proof} This follows directly from Lemma~\ref{lem:bruincond} by \cite[Theorem 6.1]{B}.
\end{proof}

\begin{proof}[Proof of Theorem~\ref{thm:ec}]
Let $f$ be a unimodal map of the form (\ref{eqn:goodmap}) which satisfies the properties (1)-(6) as in Proposition~\ref{prop:map}. By (\ref{prop:ec}), $f:\omega(c)\to\omega(c)$ is equicontinuous. By Corollary~\ref{cor:wilda}, $\omega(c)$ is a wild attractor provided that $\ell$ is sufficiently large.
\end{proof}


\begin{thebibliography}{aa}



\bibitem{BHM}
F. Blanchard, B. Host, A. Maass. {\it Topological complexity.}
Ergod. Th. Dynam. Sys. {\bf 20} (2000), 641-662.

\bibitem{BKM}
L. Block, J. Keesling, M. Misiurewicz. {\it Strange adding machine.} Ergod. Th. Dynam. Sys. {\bf 26} (2006), 673-682.

\bibitem{BL}
A. Blokh, M. Lyubich. {\it Measurable dynamics of S-unimodal maps of the interval.} Ann. Sci. \'Ecole Norm. Sup. (4) {\bf 24} (1991), 545-573.

\bibitem{B}
H. Bruin. {\it Topological conditions for the existence of absorbing Cantor sets.} Trans. Amer. Math. Soc. {\bf 350} (1998), 2229-2263.

\bibitem{BKNS}
H. Bruin, G. Keller, T. Nowicki, S. van Strien, {\it Wild Cantor attractors exist.} Ann. of
Math {\bf 143} (1996), 97-130.
\bibitem{BKP}
H. Bruin, G. Keller, M. St. Pierre. {\it Adding machine and wild attractors.} Ergod. Th. Dynam. Sys. {\bf 17} (1997), 1267-1287.

\bibitem{BRSS}
H. Bruin, J. Rivera-Letelier, W. Shen, S. van Strien. {\it Large derivative, backward contraction and invariant densities for interval maps.}
Invent. Math. {\bf 172} (2008), 509-533.

\bibitem{BSS}
H. Bruin, W. Shen, S. van Strien. {\it Existence of unique SRB-measures is typical for real unicritical polynomial families.} Ann. Sci. \'Ecole Norm. Sup. (4) {\bf 39} (2006), 381-414.
\bibitem{BV}
H. Bruin, O. Volkova. {\it The complexity of Fibonacci-like kneading sequence.}
Theoret. Comput. Sci. {\bf 337} (2005), 379-389.


\bibitem{BS}
J. Buescu, I. Stewart. {\it Liapunov stability and adding machines.} Ergod. Th. \& Dynam. Sys. {\bf 15} (1995), 271-290.

\bibitem{FP}
S. Ferenczi, K. Park. {\it Entropy dimensions and a class of constructive examples.}
Discrete Cont. Dyn. Syst. {\bf 17} (2007),  133–141.

\bibitem{Goodman}
T. Goodman. {\it Topological sequence entropy.} Proc. London Math. Soc. {\bf 29} (1974),
331-350.

\bibitem{DHP}
D. Dou, W. Huang, K. Park. {\it Entropy dimension of topological dynamical systems.} Trans. Amer. Math. Soc. {\bf 363} (2011), 659-680.

\bibitem{GSS}
J. Graczyk, D. Sands, G. Swiatek. {\it Metric attractors for smooth unimodal maps.}	Ann. of Math. {\bf 159} (2004), 725-740.

\bibitem{HLSY}
W. Huang, S. Li, S. Shao, X. Ye. {\it Null systems and sequence entropy pairs.}
Ergod. Th. Dynam. Sys. {\bf 23} (2003), 1505-1523.


\bibitem{K}
O. Kozlovski. {\it How to get rid of the negative Schwarzian condition.} Ann. of Math. (2) {\bf 152} (2000), 743-762.

\bibitem{li-shen-Haudoff}
S. Li, W. Shen. {\it Hausdorff dimension of Cantor attractors in
one-dimensional dynamics.} Invent. Math. {\bf 171} (2008), 345-387.

\bibitem{LM}
M. Lyubich, J. Milnor. {\it The Fibonacci unimodal map.} J. Amer. Math. Soc. (2) {\bf 6} (1993), 425-457.

\bibitem{mane}
R. Ma\~n\'e. {\it Hyperbolicity, sinks and measure in
one-dimensional dynamics.} Comm. Math. Phys. {\bf 100} (1985), 495--524.

\bibitem{M}
M. Martens. {\it Distortion results and
invariant Cantor sets of unimodal maps.} Ergod. Th. Dynam. Sys.
{\bf 14} (1994), 331-349.

\bibitem{MS}
W. de Melo, S. van Strien. {\it  One-dimensional dynamics.}
Springer-Verlag, Berlin, 1993.

\bibitem{Milnor}
J. Milnor. {\it On the concept of attractors.} Comm. Math. Physics.,
{\bf 99} (1985), 177-195, and {\bf 102} (1985), 517-519.

\bibitem{NS}
T. Nowicki, S. van Strien. {\it Polynomial maps with a Julia set of positive measure.}  arXiv:math/9402215.

\bibitem{SV}
S. van Strien, E. Vargas. {\it Real Bounds, ergodicity and negative Schwarzian for multimodal maps.} J. Am. Math. Soc. {\bf 17}(4) (2004), 749-782.

\bibitem{W}
P. Walters. {\it An introduction to ergodic theory.} Springer-Verlag, 1981.
\end{thebibliography}
\end{document}